\documentclass[12pt, notitlepage]{amsart}
\usepackage{latexsym, amsfonts, amsmath, amssymb, amsthm, cite}
\usepackage{graphicx}
\usepackage[linktocpage=true,colorlinks,citecolor=blue,linkcolor=blue,urlcolor=blue]{hyperref}
\usepackage{amssymb}
\usepackage{cite}
\usepackage{bbm}
\usepackage{amsmath}
\usepackage{latexsym}
\usepackage{amscd}
\usepackage{tikz}
\usepackage{amsthm}
\usepackage{mathrsfs}
\usepackage{url}
\usepackage{bbm}
\usepackage[utf8]{inputenc}
\usepackage[english]{babel}
\usepackage{amsfonts}
\usepackage{mathtools}
\usepackage{comment}
\usepackage[autostyle]{csquotes}
\usepackage[colorinlistoftodos]{todonotes}
\usepackage[noabbrev,capitalize]{cleveref}
\usepackage{adjustbox}
\usepackage{mathrsfs}
\crefname{equation}{}{}
\usepackage{graphicx, tikz}
\usepackage[margin=1in]{geometry}
\usepackage{microtype}

\numberwithin{equation}{section}
\def\R{\mathbb R}

\def\C{\mathbb C}
\def\P{\mathbb P}
\def\N{\mathbb N}
\def\E{\mathbb E}

\def\TT{\widetilde{T}}

\def\CA{\mathcal{A}}

\def\rsum{\sideset{_{} ^{}} {_{} ^{\star}}{\sum }}

\newtheorem{theorem}{Theorem}[section]
\newtheorem{lemma}[theorem]{Lemma}
\newtheorem{proposition}[theorem]{Proposition}

\theoremstyle{remark}

\theoremstyle{definition}

\theoremstyle{remark}

\numberwithin{equation}{section}
\begin{document}
	\title[Distribution of random multiplicative functions in short intervals]{Distribution of random multiplicative functions in short intervals, with proper normalization }
	\author{Adam J Harper}
	\address{Mathematics Institute, Zeeman Building, University of Warwick, Coventry CV4
7AL, England}
	\email{A.Harper@warwick.ac.uk}
	\author{Kannan Soundararajan}
	\address{Department of Mathematics, Stanford University, Stanford, CA, USA}
	\email{ksound@stanford.edu}
\author{Max Wenqiang Xu} 
\address{Yau Mathematical Sciences Center, Tsinghua University, Beijing, China}
\email{maxxu1729@gmail.com}

	\begin{abstract}
 We determine the limiting distribution of partial sums of a Steinhaus random multiplicative function $\sum_{x\le n \le x+y} f(n)$ over short intervals $[x, x+y]$, where $y \rightarrow \infty$ but $y=o(x)$. We show that with appropriate normalization, the limiting distribution is Gaussian for all such $y$. A key new feature of our result is that the normalization factor is different from the standard deviation $\sqrt{y}$ when $y$ is very close to $x$. In contrast, when $y \asymp x$ there is no normalization for which the limiting distribution is a non-degenerate Gaussian.
	\end{abstract}
	\maketitle

 \section{Introduction}
 The study of random multiplicative functions has been very active in recent years, motivated by various connections and applications in number theory, probability, and analysis. Define a {\em Steinhaus random multiplicative function} $f : \N \rightarrow \C$, by taking $(f(p))_{p \; \text{prime}}$ to be independent random variables distributed uniformly on the complex unit circle, and setting $f(n) := \prod_{p^{a}\| n} f(p)^{a}$ for all natural numbers $n$ (where $p^a \| n$ means that $p^a$ is the highest power of the prime $p$ that divides $n$). These are often considered to be models for number-theoretic functions like Dirichlet characters $\chi(n)$.  A {\em Rademacher random multiplicative function} is defined by letting $(f(p))_{p \; \text{prime}}$ be independent, taking values $\pm 1$ with probability $\frac 12$ each, and setting $f(n) := \prod_{p |n} f(p)$ for all squarefree $n$, and $f(n) = 0$ when $n$ is not squarefree.  A fundamental question in the area is to study the statistical properties of partial sums $\sum_{1\le n \le x}f(n)$, and more generally, weighted partial sums $\sum_{1\le n \le x} a(n)f(n)$. 

One striking result is Harper's resolution \cite{HarperLow} of Helson's conjecture \cite{Helson}.  Harper proved (in both the Steinhaus and Rademacher settings) that for all large $x$
 \begin{equation}\label{eqn: Harper}
  \E\Big[\Big|\sum_{1\le n \le x} f(n)\Big|\Big] \asymp \frac{\sqrt{x}}{(\log \log x)^{\frac 14}}, 
 \end{equation}
 so that partial sums of random multiplicative functions exhibit ``better than square-root cancellation."   This implies that the normalized partial sum, with the natural normalization 
 $$
 \Big(\E\Big[\Big|\sum_{1\le n \le x} f(n)\Big|^2\Big]\Big)^{-\frac 12}  \asymp \frac{1}{\sqrt{x}}, 
 $$
 has a trivial limiting distribution. In earlier work \cite{Harper}, it had been proved (in the Rademacher case) that 
 $$ 
 \Big(\E\Big[\Big|\sum_{1\le n \le x} f(n)\Big|^2\Big] \Big)^{-\frac 12} \sum_{n \leq x} f(n)
 $$ 
 could not converge in distribution to a standard Gaussian $N(0,1)$ (though this did not exclude the more subtle possibility that it might converge to some Gaussian with variance different from $1$).

 A central limit theorem may emerge in cases with general weights $a(n)$ that somewhat disrupt the multiplicative structure.  One particular example that has attracted considerable attention is when $a(n)$ is an indicator function of a short interval, namely we study the limiting distribution of $\sum_{x\le n \le x+y}f(n)$ where $y \rightarrow \infty$, but $y=o(x)$. It was first proved by Chatterjee and Soundararajan in \cite{CS} (for Rademacher $f(n)$) that the limiting distribution, with usual normalization $1/\sqrt
 {y}$, is Gaussian as long as $Cx^{1/5} \log x \leq y = o(x/\log x)$. Note that the lower bound condition on $y$ is, in a sense, much less interesting here; it was imposed so that one knows unconditionally that all intervals $[x, x+y]$ contain roughly the expected quantity of squarefree numbers. This result was improved by Soundararajan and Xu in \cite{SoundXu} to the wider range $Cx^{1/5} \log x \leq y\le x/(\log x)^{\log4-1 + \epsilon}$, and also extended to allow Steinhaus $f(n)$ whenever $y \rightarrow \infty$ and $y\le x/(\log x)^{\log4-1 + \epsilon}$. On the other hand, by using the triangle inequality and Harper's theorem \eqref{eqn: Harper}, one can see that when $y$ is sufficiently close to $x$, $\frac{1}{\sqrt{y}} \sum_{x\le n \le x+y}f(n)$ has a trivial limiting distribution. In a recent work of Caich~\cite{caichshort}, it is proved that such trivial limiting distribution holds as long as $y\ge \frac{x}{\exp((\log \log x)^{1/2-\epsilon})}$. It has been an intriguing question to properly understand all these transitions in behavior. In particular, one would like to know whether $\frac{1}{\sqrt{y}} \sum_{x\le n \le x+y}f(n)$ actually has Gaussian behavior for $y$ larger than $x/(\log x)^{\log4-1 + \epsilon}$, and what can be said about the distribution for those very large $y$ where the limit becomes trivial.

 Perhaps surprisingly, 
we prove a Gaussian limit theorem for partial sums
$\sum_{x\le n \le x+y}f(n)$ of a Steinhaus random multiplicative function whenever $y \rightarrow \infty$ with $y=o(x)$. This completely determines the distribution over all short intervals, thereby completing the previous results in \cite{CS, SoundXu, caichshort}.

\begin{theorem}\label{thm: main}
Let $f(n)$ be a Steinhaus random multiplicative function. There exists a deterministic scaling factor $V(x,y)$ such that the quantity 
\[ \frac{1}{\sqrt{V(x, y)}} \sum_{x\le n \le x+y}f(n)  \]
converges in distribution to a standard complex Gaussian random variable with mean $0$ and variance $1$, as $y\to \infty$ with $y=o(x)$.  The scaling factor satisfies 
\begin{equation}\label{eqn: hVexplicit}
V(x,y) \sim \frac{y}{\sqrt{2\pi}} \int_{-\kappa}^{\kappa} e^{-u^{2}/2} du, \qquad \text{with} \qquad \kappa= \frac{\log (x/y)}{\sqrt{2\log \log x}}, 
\end{equation}
with the asymptotic holding as $x/y \to \infty$.   In particular 
\begin{equation}\label{eqn: V}
  V(x, y) \asymp  y \cdot \min \Big\{1, \frac{\log (x/y)}{\sqrt{\log \log x}}\Big\},  
\end{equation}
and $V(x, y) \sim y$ as $\frac{\log(x/y)}{\sqrt{\log \log x}} \to +\infty$.
\end{theorem}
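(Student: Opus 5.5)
We will prove the theorem by conditioning on the values of $f$ at large primes and applying a martingale central limit theorem, in the style of \cite{SoundXu}. The conditional variance is then a random quantity, and its limit in probability identifies $V(x,y)$.

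\textbf{Setup.} Write $S=\sum_{x\le n\le x+y} f(n)$. Each $n\le 2x$ has a largest prime factor $P(n)=p$, and we write $n=pm$ with $P(m)\le p$ (squares of large primes contribute negligibly). Grouping by $p$ gives
\[ S=\sum_{p} f(p)\sum_{\substack{x/p\le m\le (x+y)/p\\ P(m)<p}} f(m) =: \sum_p f(p) A_p , \]
and $A_p$ depends only on $f$ at primes $<p$. Ordering the primes increasingly makes $f(p)A_p$ a martingale difference sequence. The complex martingale CLT (Steinhaus, so $\E f(p)^2=0$ and the pseudo-variance vanishes) then reduces the theorem to two conditions:
\begin{enumerate}
\item[(i)] $\sum_p |A_p|^2 / V(x,y)\to 1$ in probability;
\item[(ii)] a Lindeberg condition, which we check via the fourth-moment bound $\sum_p \E|A_p|^4 = o(V^2)$.
\end{enumerate}
For small $p$ (say $p\le y$ or $p\le \exp((\log x)^{c})$) the sums $A_p$ are long. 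For those we instead treat the whole smooth part as a single negligible block, or include it through a truncation.

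\textbf{Step (i): the random sum.} We have $\sum_p|A_p|^2=\sum_p\sum_{m,m'} f(m)\overline{f(m')}$, and the diagonal $m=m'$ gives $\#\{n\in[x,x+y]\}\approx y$. The off-diagonal terms are the whole difficulty. For $m\neq m'$ we need $|m-m'|\le y/p$, so off-diagonal terms only occur when $p\le y$. For $p>y$, $A_p$ has at most one term, so the conditional variance is essentially a sum of $|f(m)|^2$. Naively this gives $V\sim y$, which contradicts the theorem when $y$ is close to $x$. The resolution is that the off-diagonal terms for smaller $p$ are not negligible: by the Harper-type ``multiplicative chaos'' phenomenon, $\sum_{p\le y}|A_p|^2$ is governed by $\int |F(1/2+it)|^2$-type Euler products, which are typically much smaller than their mean.

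So we would choose $V(x,y)$ to be the median-scale (typical) size rather than the mean:
\[ V(x,y)=\sum_{y<p\le x}\#\{m\}\approx \sum_{n\in[x,x+y],\,P(n)>y} 1 \]
plus a typical contribution from $p\le y$. The contribution from $p\le y$ should be $o(y)$ in probability: it is bounded in the spirit of \eqref{eqn: Harper} via low conditional moments and the better-than-squareroot saving. Conversely, numbers in $[x,x+y]$ with $P(n)\in(y,x]$ contribute essentially deterministically.

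The density of $n\sim x$ whose largest prime factor exceeds $y$, measured appropriately on the log–log scale, is where the Gaussian integral enters. The relevant quantity is not $P(n)$ itself but the distribution of $\log(n/P(n))$ against $\log(x/y)$. Since $\log m$ for a typical $m$ has Gaussian fluctuations of size $\sqrt{2\log\log x}$ under the multiplicative chaos weighting (the critical tilt), the proportion of surviving mass is $\frac1{\sqrt{2\pi}}\int_{-\kappa}^{\kappa}e^{-u^2/2}\,du$. Making this precise is the main obstacle. We would compute $\sum_p|A_p|^2$ by Parseval as
\[ \int |F_p(1/2+it)|^2\,\Big|\tfrac{(x+y)^{it}-x^{it}}{it}\Big|^2 dt , \]
where the kernel localizes to $|t|\lesssim x/y$. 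We would then show that the Euler product $|F(1/2+it)|^2$ integrated against a kernel at scale $x/y$ concentrates. The expected approach is a Girsanov/Gaussian-tilt computation for $\sum_{p}\Re f(p)p^{-1/2-it}$ with variance $\tfrac12\log\log x$, combined with a second-moment concentration argument after conditioning off a sparse set of ``bad'' events (as in Harper's work) to get convergence in probability rather than just in $L^1$.

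\textbf{Step (ii) and the remaining claims.} The fourth-moment bound follows from counting solutions of $m_1m_2=m_3m_4$ inside short intervals, giving $\ll y^2(\log)^{-c}$. The asymptotic \eqref{eqn: V} follows from the elementary estimate $\int_{-\kappa}^{\kappa}e^{-u^2/2}\,du\asymp\min(1,\kappa)$.
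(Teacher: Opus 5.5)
Your reduction is sound as far as it goes. The increments $f(p)A_p$, $p>z$, are essentially the martingale to which the paper applies Theorem~\ref{thm: SX} after conditioning on $(f(p))_{p\le z}$, and your (ii) is the easy condition \eqref{eqn: linden}. The gap is in (i), which is the whole theorem.

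\textbf{Your main term is negligible.} The condition $P(n)>y$ forces $n=pm$ with $m<x/y+1$, so $\#\{n\in[x,x+y]:P(n)>y\}\ll y\log(x/y)/\log x$ for $x/y\le\sqrt{\log x}$. This is $o(V(x,y))$. Hence all of $V$ comes from $p\le y$, the very part you declare to be $o(y)$. That claim is false when $\log(x/y)/\sqrt{\log\log x}\to\infty$, where the theorem gives $V\sim y$.

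\textbf{The Gaussian integral is a ballot probability.} It does not come from the distribution of $\log(n/P(n))$. After the approximate Girsanov tilt by $|F_z(\tfrac12+it)|^2$, the recentred walk $j\mapsto\log|\mathcal{F}_j(\tfrac12+it)|-(\tau-j)$ is close to a Gaussian walk of total variance $\sim\frac12\log\log z$. One needs the probability that it stays below about $\frac12\log T$. The reflection principle (Lemma~\ref{lemma: rwasymp}) turns this into $\frac{1}{\sqrt{2\pi}}\int_{-\kappa}^{\kappa}e^{-u^2/2}\,du$. So you never reach a usable deterministic normalization. The paper's is $V=e^{-\gamma}\frac{y}{\log z}\E[|F_z(\tfrac12+it)|^2\mathbbm{1}_{\mathcal{H}(t)}]$; the barrier $\mathcal{H}(t)$ is what produces the factor $\min\{1,\log(x/y)/\sqrt{\log\log x}\}$.

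\textbf{The concentration step is only gestured at, and your version would not close.} Summing your Parseval identity over $p$ produces Euler products $F_p$ of varying length. The paper instead conditions on $f(p)$, $p\le z=x^{1/\log\log\log x}$. It then uses the sieve (Lemma~\ref{lem: sieve}) and Plancherel to write the conditional mean of the bracket as $\frac{e^{-\gamma}y}{4\pi T\log z}\int_{-\TT}^{\TT}|F_z(\tfrac12+it)|^2K_T(t)^2\,dt$.

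Discarding a sparse set of bad events with a single high-probability barrier does not make this integral concentrate. The variance contribution from nearby $t,u$ is about $T$ times the square of the normalized barrier height, and it must be $o(T^2)$. So the partial products must be capped slightly below $\sqrt{T}e^{\tau-j}$. Such a cap cannot be imposed simultaneously for all $|t|\le\TT$ with high probability: a union bound over $\asymp T$ roughly independent unit intervals only allows a cap a bit above $\sqrt{T}e^{\tau-j}$. That is why the paper pairs two barriers:
\begin{itemize}
\item $\mathcal{G}(t)$, which holds everywhere with high probability (Proposition~\ref{prop3.3});
\item the stronger $\mathcal{H}(t)$, which does not, but whose failure on $\mathcal{G}(t)$ is cheap in mean (Proposition~\ref{prop: Hfail}).
\end{itemize}
It then proves Proposition~\ref{prop: concentration} via Lemmas~\ref{lem: concentration} and \ref{lem: slice}.

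\textbf{The off-diagonal condition is not a counting exercise.} Concentration of $\sum_p|A_p|^2$ around its conditional mean given $(f(p))_{p\le z}$ is exactly condition \eqref{eqn: cross}; that sum is the conditional variance of your bracket. The plain fourth moment blows up for $y$ near $x$, which is what confined \cite{CS, SoundXu}. The paper needs:
\begin{itemize}
\item Perron integrals;
\item the bound for $G(t_1,\dots,t_4)$ in Lemma~\ref{lem4.4};
\item truncation to the near diagonal;
\item a second barrier argument with $\mathcal{G}^*,\mathcal{H}^*$, where the choice of $z$ is essential to absorb the $(\log x/\log z)^4$ losses.
\end{itemize}
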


The value of $V(x,y)$ that emerges naturally from our arguments is expressed in terms of a quantity $\mu(x,y)$, which is (up to scaling) a certain random walk probability. See Proposition \ref{prop: mu} and \eqref{4.3} below, and the discussion in sections \ref{hsubsecoutline} and \ref{subsec:introconc}. However, we are free to replace $V(x,y)$ in Theorem \ref{thm: main} by any quantity to which it is asymptotic (as $y\to \infty$ with $y=o(x)$), and our estimation of $\mu(x,y)$ (again see Proposition \ref{prop: mu}) shows that \eqref{eqn: hVexplicit} is such a choice.

This is the first time that partial sums of random multiplicative functions with non-obvious normalization have been found to have a Gaussian limiting distribution. There are a few other number-theoretic situations where somewhat similar (and very interesting) phenomena arise.
For example, Montgomery and Soundararajan~\cite{MS04} show, assuming various strong but plausible hypotheses, that the distribution of the von Mangoldt function in short intervals  $\sum_{x \leq n \leq x+y} \Lambda(n)$ as $x \leq N$ varies and with $y$ in the range $N^{\delta} \leq y \leq N^{1-\delta}$, should be roughly Gaussian, but with variance differing from the obvious guess by a constant factor. Gorodetsky, Mangerel and Rodgers~\cite{GMR} recently gave a beautiful proof that the analogous count of squarefree numbers in short intervals, where $x \leq N$ varies and $y \rightarrow \infty$ with $y = N^{o(1)}$, is roughly Gaussian with variance differing from the obvious guess by a power. In the squarefree case, the variance drop reflects the highly structured nature of the sequence of squarefree numbers. For the von Mangoldt function (equivalently primes) the situation is closer to our case here, but again the variance drop reflects a structural property of the sequence, namely its connection with the zeros of the Riemann zeta function. We are not aware of a similar phenomenon having been observed before in any random or deterministic context quite analogous to the present one (e.g. involving character sums). We also note that in both the primes and the squarefree examples, one sees the variance drop simply by computing the variance of the sequence itself (and the results on limiting distributions are obtained by a moment method). In contrast, the variance of our random sums has the obvious size one would expect (from orthogonality), {\em it is only in the limiting distribution that one sees a change of scaling}.

\vspace{12pt}
Theorem \ref{thm: main} contrasts significantly with the distributional behaviour of the long sums $\sum_{n \leq x} f(n)$, or $\sum_{x\le n \le x+y}f(n)$ where $y \asymp x$. In view of  \eqref{eqn: Harper} and Theorem \ref{thm: main}, one might initially suppose that if one rescaled these sums by something like $\frac{\sqrt{x}}{(\log \log x)^{1/4}}$, then a Gaussian limiting distribution should appear. However, the ideas in the proofs, and specifically the connection with so-called {\em multiplicative chaos} (on which we elaborate below), suggest otherwise. Harper~\cite{HarperLow} proposed, in both the Steinhaus and Rademacher cases, that $\frac{(\log \log x)^{1/4}}{\sqrt{x}} \sum_{n \leq x} f(n)$ should have limiting distributions related to the total mass of critical multiplicative chaos. Later, Gorodetsky and Wong made an explicit conjecture of the limiting distribution in \cite[Conjecture 1.6]{GW24}. In particular, this would imply that the tail probabilities $\P\left(|\sum_{n \leq x} f(n)| \geq \lambda \frac{\sqrt{x}}{(\log\log x)^{1/4}} \right)$ should be of size $\asymp 1/\lambda^2$ for fixed large $\lambda$, much heavier than a Gaussian. Recent breakthrough work of Gorodetsky and Wong \cite{GW-Final} determines, in the Steinhaus case (although not yet the Rademacher case), that the limiting distribution is indeed of this form. Their arguments also apply to $\sum_{x\le n \le x+y}f(n)$ when $y \asymp x$.

Exploiting the known blow-up of the low moments $(\frac{(\log\log x)^{1/4}}{\sqrt{x}})^{2q} \E[|\sum_{n \leq x} f(n)|^{2q}]$ as $q$ approaches $1$, we establish the 
following heavy tail bound.
\begin{proposition}\label{prop: tailsprop}
Let $f(n)$ be a Steinhaus or Rademacher random multiplicative function. There exists a constant $A$ such that, for all large $\lambda$ and $x$ sufficiently large in terms of $\lambda$, we have
$$ 
\P\Big(|\sum_{n \leq x} f(n)| \geq \lambda \frac{\sqrt{x}}{(\log\log x)^{1/4}} \Big) \geq \frac{1}{\lambda^A}. 
$$
Moreover, for any small $\delta > 0$ there exists $A(\delta)$ such that
$$ 
\P\Big(|\sum_{x \leq n \leq (1+\delta)x} f(n)| \geq \lambda \frac{\sqrt{\delta x}}{(\log\log x)^{1/4}} \Big) \geq \frac{1}{\lambda^{A(\delta)}}
$$
whenever $\lambda$ is large enough in terms of $\delta$, and $x$ is sufficiently large in terms of $\lambda$.
\end{proposition}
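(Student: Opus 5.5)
The plan is a Paley--Zygmund type argument that plays low moments against the second moment. Write $S = \sum_{n\le x} f(n)$ and $N = \sqrt{x}/(\log\log x)^{1/4}$. I take as input the known low-moment estimates from Harper's work \cite{HarperLow}, valid uniformly for $2/3 \le q \le 1$ in both the Steinhaus and Rademacher cases:
$$ \E|S|^{2q} \asymp \Big(\frac{x}{1+(1-q)\sqrt{\log\log x}}\Big)^{q}. $$
In particular, for fixed $q<1$ and large $x$ this gives $\E|S|^{2q} \asymp (1-q)^{-q} N^{2q}$, which blows up as $q \to 1$. Separately, I will need a crude bound at the exponent $2q$ coming from the lower bound side, namely $\E|S|^{2q'}\ll N^{2q'}$ for some fixed $q'<1$ (say $q' = 2/3$), again by Harper.

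\textbf{Step 1 (tail cannot be light).} Suppose, for contradiction, that $\P(|S| \ge \lambda N) < \lambda^{-A}$ for a large $\lambda$. I will bound $\E|S|^{2q}$ from above by splitting over dyadic ranges $2^{j}\lambda N \le |S| < 2^{j+1}\lambda N$ together with the range $|S|<\lambda N$. On the small range the contribution is at most $(\lambda N)^{2q-2q'}\E|S|^{2q'} \ll \lambda^{2}N^{2q}$. For the large ranges I cannot assume the same bound at each larger threshold $2^j\lambda$, so instead I use Hölder with the second moment $\E|S|^2 \le x$:
$$ \E\big[|S|^{2q}\mathbf 1_{|S|\ge\lambda N}\big] \le (\E|S|^2)^{q}\,\P(|S|\ge\lambda N)^{1-q} \le x^{q}\lambda^{-A(1-q)}. $$
The obstacle is that $x^q = N^{2q}(\log\log x)^{q/2}$ carries an unbounded factor. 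The fix is to apply the hypothesis at level $\lambda$ together with a tail comparison in which $x$ is taken large in terms of $\lambda$, and to choose $q$ depending on $\lambda$.

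\textbf{Step 2 (choosing $q$).} My plan is to take $q = 1 - 1/\log\lambda$, or rather to let $1-q$ be small compared with $1/\log\lambda$ but fixed relative to $x$. Then the lower bound $\E|S|^{2q} \gg (1-q)^{-1}N^{2q} \approx (\log \lambda) N^{2q}$ must be attained mainly from values of $|S|$ between $\lambda N$ and $x^{1/2}$. To avoid the $(\log\log x)$ loss in Step 1, I would replace the second moment by a moment $q_2\in(q,1)$ in the Hölder step:
$$ \E\big[|S|^{2q}\mathbf 1_{|S|\ge \lambda N}\big]\le (\E|S|^{2q_2})^{q/q_2}\,\P(|S|\ge\lambda N)^{1-q/q_2}, $$
with $\E|S|^{2q_2}\ll (1-q_2)^{-q_2}N^{2q_2}$. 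Taking $1-q_2=(1-q)/2$ gives the bound $\ll (1-q)^{-q}N^{2q}\lambda^{-A(1-q)/2}\cdot O(1)$. The moment asymptotic has implicit constants $c\le C$, so the lower bound $c(1-q)^{-q}N^{2q}$ is at most the small-range contribution $\lambda^{2}N^{2q}$ plus $C'(1-q)^{-q}N^{2q}\lambda^{-A(1-q)/2}$. Choosing $1-q = \lambda^{-3}$, the first term is negligible against $c\lambda^{3q}N^{2q}$, and we need $\lambda^{-A\lambda^{-3}/2} < c/C'$. This fails for a fixed $A$, so the bookkeeping must be refined: for the small range I would use the sharper bound $(\lambda N)^{2q}$ with $q$ close to $1$, and take $1-q=\lambda^{-1/2}$, say. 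Balancing the two terms polynomially in $\lambda$ is the main obstacle. I expect it to work by using the explicit dependence in $1-q$, for instance through the fact that $\E|S|^{2q}/\E|S|^{2q_2}$ has controlled ratio, yielding $A$ of moderate fixed size. This calibration is the crux.

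\textbf{Step 3 (the interval $[x,(1+\delta)x]$).} The same argument applies once I have the analogous low-moment estimates for $\sum_{x\le n\le (1+\delta)x} f(n)$, namely order $(\delta x/(1+(1-q)\sqrt{\log\log x}))^q$ with constants depending on $\delta$. These follow from Harper's method, which goes through the Euler product and the integral $\int|F(1/2+it)|^2|\frac{((1+\delta)x)^{it}-x^{it}}{t}|^2dt$, where the kernel is comparable to $\delta^2$ for $|t|\le 1/\delta$. The dependence on $\delta$ of the constants then gets absorbed into $A(\delta)$ and into how large $\lambda$ must be.
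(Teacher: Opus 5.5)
There is a genuine gap, and it is the calibration you yourself flag as the crux in Step 2: as written, none of your choices gives a polynomial lower bound.

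\textbf{Where it fails.} The problem is the small range $\{|S|<\lambda N\}$.
\begin{itemize}
\item Bounding $\E[|S|^{2q}\mathbbm{1}_{|S|<\lambda N}]$ by $(\lambda N)^{2q-2q'}\E|S|^{2q'}$ with a \emph{fixed} $q'$ costs a genuine power $\lambda^{2(q-q')}$. Your alternative $(\lambda N)^{2q}$ is worse.
\item The blow-up you play against this is only $\E|S|^{2q}/N^{2q}\asymp (1-q)^{-1}$. So you are forced to take $1-q\le\lambda^{-\kappa}$ for some $\kappa>0$.
\item Your Hölder step has exponent $1-q/q_2\asymp 1-q$, so it then yields only
\[ \P(|S|\ge\lambda N)\ge (c/C')^{O(1/(1-q))}=\exp(-O(\lambda^{\kappa})). \]
\end{itemize}
No rebalancing within this scheme helps, for instance taking $1-q=\lambda^{-1/2}$ or appealing to a ``controlled ratio'' of moments. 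The Hölder exponent is tied to $1-q$, and $1-q$ is forced to be polynomially small by the truncation cost.

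\textbf{The missing idea.} Truncate at $\lambda$ using a moment $p$ within $1/\log\lambda$ of $q$, so that the truncation cost $\lambda^{2(q-p)}\le\lambda^{2(1-p)}$ is a bounded constant. Concretely, put $g=S/N$. Harper's uniform bounds give
\[ \frac{c}{3(1-q)}\le\E|g|^{2q}\le\frac{C}{1-q} \qquad \text{for } 1-q\ge 1/\sqrt{\log\log x}, \]
which holds once $x$ is large in terms of $\lambda$. Choose
\[ 1-p=\frac{1}{\log\lambda},\qquad 1-q=\frac{c}{50C\log\lambda},\qquad 1-r=\frac{1-q}{2}. \]
Your $q_2$ is exactly this $r$, so your large-range Hölder step is fine.

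\emph{Lower bound on the tail part.} We have
\[ \E[|g|^{2q}\mathbbm{1}_{|g|>\lambda}]\ge \E|g|^{2q}-\lambda^{2(q-p)}\E|g|^{2p}\ge \frac{c}{3(1-q)}-\frac{e^2C}{1-p}\ge 8C\log\lambda . \]

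\emph{Upper bound on the tail part.} Hölder gives
\[ \E[|g|^{2q}\mathbbm{1}_{|g|>\lambda}]\le \P(|g|>\lambda)^{1-q/r}\big(\E|g|^{2r}\big)^{q/r}\le \P(|g|>\lambda)^{1-q/r}\cdot\frac{100C^2\log\lambda}{c}. \]

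\emph{Conclusion.} The factors of $\log\lambda$ cancel. Since $1/(r-q)=100C\log\lambda/c$, you get
\[ \P(|g|>\lambda)\ge \Big(\frac{2c}{25C}\Big)^{1/(r-q)}=\lambda^{-A}. \]
What makes everything polynomial is that all three exponents sit at distance $\asymp 1/\log\lambda$ from $1$, with ratios fixed by $c/C$.

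Your Step 3 reduction for $[x,(1+\delta)x]$ matches the paper's route. There the upper moment bound follows from the triangle inequality, and the lower bound from adapting Harper's Euler product argument. With those inputs, the same three-exponent argument applies verbatim.
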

Proposition \ref{prop: tailsprop} will be quickly proved in Section \ref{SEC: LONG SUM}, essentially independently of the rest of the proofs in this paper. As we describe explicitly there, this implies that there is {\em no} choice of normalizing factor $V(x)$ for which $\frac{1}{\sqrt{V(x)}} \sum_{n \leq x} f(n)$ can converge in distribution to a non-degenerate Gaussian.

We shall comment more on the work of Gorodetsky and Wong~\cite{GW-Final} (and also S.~Hardy~\cite{hardy}) on the full-sum limiting distribution in Section \ref{subsec:furtherremarks}. Here the limiting distribution is non-Gaussian, or more precisely it may be thought of as a Gaussian whose variance is itself a genuine, heavy-tailed random variable. Perhaps it is then surprising that in our short interval case, there exists a \textit{deterministic normalization} which makes the random sum behave like a Gaussian. This new feature is due to a concentration phenomenon of the conditional variance, which we will try to explain below (see Section \ref{subsec:introconc}).

\subsection{Outline proof strategy for Theorem \ref{thm: main}}\label{hsubsecoutline}
The usual framework for proving a limiting distribution for partial sums of random multiplicative functions (originating in Harper's work~\cite{Harper} on sums with few prime factors) is by applying a suitable version of the {\em martingale central limit theorem}. This is the same framework we will use but with several new additional features, both conceptual and technical. Essentially, we require a fusion of martingale technique with some quite delicate {\em barrier arguments}, developing the previous work of e.g. Caich~\cite{caichshort}, Harper~\cite{HarperLow, Harperlargevalue} and Xu~\cite{Xu}.

We apply a version of McLeish's central limit theorem \cite{McLeish} developed in \cite{SoundXu}. A direct application would require showing that the fourth moment of the random sum  $\sum_{x\le n \le x+y}f(n)$ (after a suitable pruning of the terms) is dominated by the diagonal contribution.  However, in fact the fourth moment blows up once $y$ is large enough. This forces the applicable range of $y$ to be $\ll  x/(\log x)^{c}$ for certain $c>0$, exactly the limitation of the results in \cite{CS} and \cite{SoundXu}. 

To overcome the limitation, our strategy is to apply the martingale central limit theorem only after {\em conditioning on small primes} $f(p)$ for $p\le z$ (for a parameter $z$ that needs to be carefully chosen), instead of applying it to the full sum from the beginning.   For our choice of $z$, the set of integers in $[x,x+y]$ with all prime factors below $z$ has negligible size, and may be ignored.   By using the multiplicative property of $f$, the sum over the remaining integers may be decomposed as 
\[
\sum_{\substack{x \le n \le x+y \\ P(n) > z  }  } f(n) = \sum_{ \substack{ 1<m\le x+y \\ p|m \implies p>z}} f(m) \sum_{\substack{x/m\le n \le (x+y)/m \\ p|n \implies p\le z  }  } f(n),
\] 
where henceforth we write $P(n)$ for the largest prime factor of $n$.  
After the conditioning, the inner sum is fixed and can be viewed as a constant coefficient $a(m)$ indexed by $m$. Then one can apply the version of the martingale central limit theorem \cite{SoundXu} for general sums $\sum_{m}a(m)f(m)$ aiming to get a \textit{conditional} central limit theorem. As noted, to make this work we need to choose $z$ carefully.
Suppose such a choice of $z$ exists and the theorem in \cite{SoundXu} can be successfully applied. Then this implies that given a fixed choice of $f(p)$ for $p\le z$, the random sum is approximately Gaussian, but with a conditional variance $V_f(x, y) = \sum_{m}|a(m)|^2$ which depends on the values of $f(p)$ for $p\le z$. 

In order to get a limiting distribution for the original random sum, we need to ``reveal'' the conditioned choices of $f(p)$ with $p\le z$ and 
understand the behavior of the conditional random variance $V_f(x, y)$. It is not clear a priori whether $V_f(x, y)$ has genuinely nontrivial distribution (as the analogous object does for the long sum $\sum_{n \leq x} f(n)$) or typically behaves like a deterministic quantity, i.e., for asymptotically almost all realizations of $f(p)$ with $p\le z$, it is asymptotic to some deterministic quantity $V(x, y)$ (which doesn't depend on $f$, only on $x, y$).  One important feature of our proof is a concentration phenomenon which happens with the choice $z = x^{\frac{1}{\log \log \log x}}$ (say): namely, for a deterministic quantity $V(x,y)$, one has typically $V_f(x, y)\sim V(x, y)$. Moreover,  
at the same time, this choice of $z$ is eligible for passing through the conditions of the martingale central limit theorem in \cite{SoundXu}.

In the next couple of subsections, we will try to explain a little more of what is involved in the conditioned application of the martingale central limit theorem (which amounts to needing to understand something like the ``off-diagonal'' contribution to a conditional fourth moment), and in showing the crucial concentration of the conditional variance.

In the process of proving the concentration result, we see the phase transition in $V(x, y)$ when $y$ gets close to $x$. As found in previous work \cite{SoundXu, CS}, $\frac{1}{\sqrt{y}} \sum_{x\le n \le x+y}f(n)$ has the expected Gaussian limiting distribution when $y\ll x/(\log x)^{c}$ for suitable $c$, so our choice of $V(x, y)$ should satisfy $V(x, y)\sim  y$ (the obvious value, namely the variance of $\sum_{x\le n \le x+y}f(n)$) for such $y$.  
However, as $y$ approaches $x$, a ``better than square-root cancellation" phenomenon shows up, produced by multiplicative chaos effects. Indeed, the transition point of $y$ is closely connected to a ballot-type problem in the theory of random walks. The work of Caich \cite{caichshort} gives that $y\approx x/\exp((\log \log x)^{1/2})$ is a transition point for ``better than square-root cancellation" (working at the level of order of magnitude of low moments), which is also the transition point for the shape of $V(x, y)$ changing from $y$ to $ \asymp y \cdot \frac{\log(x/y)}{\sqrt{\log \log x}}$. We also refer readers to \cite{Chang, Xu} for related proofs exploring such transitions (again at the order of magnitude level) in other settings.

At a very high level, one might have in mind that a standard way of proving a central limit theorem (although generally not easy to apply to random multiplicative function problems) is the method of moments, computing all moments of the random variables of interest and showing they converge to Gaussian moments. More sophisticated methods, such as martingale techniques, can essentially reduce from needing to compute all moments to just the fourth moment. Chatterjee and Soundararajan's work~\cite{CS} on short interval sums, using Stein's method, operates on this level. To obtain their improvement in the range of $y$, Soundararajan and Xu~\cite{SoundXu} continue to work on the fourth moment level (now using martingales), but first remove a sparse subset of integers from the short interval sum so that the fourth moment behaves well on a wider range. In proving Theorem \ref{thm: main} on the full range of $y$, we work in a regime where not only the fourth moment, but even the second moment, does not behave well and reflect the genuine distributional behaviour. As we shall try to explain, to address this we make use of ``barriers'' editing our random variables (after conditioning) on the ``Fourier side'' of random Euler products, rather than removing terms from the original sums. The argument thus becomes somewhat indirect, but ultimately explains clearly the cause of the transition in $V(x,y)$, and the difference between short interval sums (where $y=o(x)$) and long sums. We are not aware of any way to understand this transition operating simply with $\sum_{x\le n \le x+y}f(n)$ in ``physical space''.

\subsection{Concentration of conditional variance}\label{subsec:introconc}
As a first step, one can see that the definition $\sum_{m}|a(m)|^2$ of $V_f(x, y)$ is a discrete mean square of certain random sums (see \eqref{eqn: Vf}, below). As usual in this area, we apply Parseval's identity to transform the sum to its Fourier side, i.e. to some continuous second moment $\int |F_z(s)|^{2}\,ds$
of the random Euler product $F_z(s) := \prod_{p\le z}(1-\frac{f(p)}{p^{s}})^{-1}$. This step is standard but more demanding than in many previous works (compare with e.g. Caich's paper~\cite{caichshort}), since here we need an exact \textit{asymptotic} expression for $V_f(x, y)$.  Even the loss of a multiplicative constant factor would break the proof of concentration and of a limiting distribution. This generates some technical issues, but these are manageable and we find (with high probability over the $(f(p))_{p \leq z}$) that
\[V_f(x, y) \sim  \frac{e^{-\gamma}}{4\pi} \frac{y}{\log z} \frac{1}{T} \int_{-T(\log T)^{100}}^{T(\log T)^{100} } |F_z(\tfrac{1}{2}+it)|^{2}|K_T(t)|^{2} dt, \]
where $K_T(t)$ is a certain real-valued kernel which decays rapidly for large $t$ and thus allows the integral to be truncated at $|t|\approx  T$ with $T \approx 2x/y$. Notice that going from an interval $[x,x+y]$ in physical space, to an integral of effective length $\asymp x/y$ on the Fourier side, is consistent with the usual numerology in multiplicative number theory, in particular shorter intervals require more frequencies $t$ on the Fourier side. This reformulating of $V_f(x, y)$ forms the bulk of Section~\ref{SEC: CONDVAR}.

We must proceed to analyze $V_f(x, y)$ further, needing to understand this not just in (typical) order of magnitude, but seeking a more delicate concentration result. One might hope that an estimate like 
\begin{equation}\label{dream}
   \E[|V_f(x, y) - V(x, y)|^{2}] = o(V(x, y)^{2})  
\end{equation}
holds for our candidate $V(x, y)$, which if true would establish that typically $V_f(x, y)\sim V(x, y)$. However, when $y$ is large this second moment type computation (which, since $V_f(x, y)$ is itself a mean square, is really like a fourth moment) is dominated by certain relatively rare events, and (as expected given the failure of a direct application of McLeish's theorem) we cannot achieve such concentration directly. To overcome this, 
the crucial idea is to impose barrier events $\mathcal{G}(t)$ on the growth of the partial Euler products $F_z(\frac 12+it)$ at all different ``scales''. We hope that $\mathcal{G}(t)$ holds, simultaneously at all relevant points $t$, with high probability, and then it would suffice to show that \eqref{dream} holds with the restriction $\mathcal{G}(t)$ inside our integral approximation to $V_f(x, y)$. The purpose of $\mathcal{G}(t)$ is to suppress any blow-up of \eqref{dream} created by rare extreme events. Setting barrier events is a well developed tool in the probabilistic study of so-called multiplicative chaos, and it first shows up in \cite{HarperLow} for the study of random multiplicative functions. 

However, here it turns out that inserting a high probability event $\mathcal{G}(t)$  is still not enough to establish the concentration estimate. We solve this by inserting a further refined constraint $\mathcal{H}(t)$ and show that $\E[\mathbbm{1}_{\mathcal{G}(t)}\mathbbm{1}_{\mathcal{H}(t)~\text{fails}}|F_z(\frac 12+it)|^{2} ] $ is very small. We emphasize that the combination of $\mathcal{G}(t)$ with $\mathcal{H}(t)$, and of analyzing what happens in high probability along with what happens in mean square, is crucial here.  We cannot show that the stronger barrier event $\mathcal{H}(t)$ holds at all points $t$ with high probability, and neither is it true that $\E[\mathbbm{1}_{\mathcal{H}(t)~\text{fails}}|F_z(1/2+it)|^{2} ] $ (without the initial barrier $\mathbbm{1}_{\mathcal{G}(t)}$) is small when $y$ is large (in fact it would be $\sim \E[|F_z(\frac 12+it)|^{2} ] $). The combination of the two barriers allows us further to assume $\mathcal{H}(t)$ holds, and then we aim for a concentration result like 
\[
\E\Big[\Big|\int_{-T(\log T)^{100}}^{T(\log T)^{100}}|F_z(1/2+it)|^{2} K_T(t)^{2} \mathbbm{1}_{\mathcal{H}(t)} dt - \text{Main Term}\Big|^{2}\Big] = o((\text{Main Term})^{2}). 
\]
This is accomplished in Section~\ref{SEC: CONCEN}. Here the Main Term is simply 
$$
\E \Big[ \int_{-T(\log T)^{100}}^{T(\log T)^{100}}|F_z(1/2+it)|^{2} K_T(t)^{2} \mathbbm{1}_{\mathcal{H}(t)} dt\Big],
$$
 and thus we take $V(x, y)$ to be a scaled version of this. The transition \eqref{eqn: V} in $V(x,y)$, as $y$ varies with $x$, precisely reflects the effect of $\mathbbm{1}_{\mathcal{H}(t)}$ (which translates into multiplication by a certain random walk probability) inside this expectation.

This is the point in the argument where we see the conceptual origin of $V(x, y)$, and we shall try to expand on what is happening. It turns out that the Euler products $F_z(\frac 12+it)$ behave more or less independently on points $t$ that are at least 1 apart, say. Thus our integral, which is  (recalling that $K_T(t)$ basically serves to truncate things at $|t|\approx  T$)
$$
\approx \sum_{|n| \leq T(\log T)^{100}} \int_{n-\frac 12}^{n+\frac 12}|F_z(\tfrac 12+it)|^{2} K_T(t)^{2} \mathbbm{1}_{\mathcal{H}(t)} dt \approx \sum_{|n| \leq T} \int_{n-\frac 12}^{n+\frac 12}|F_z(\frac 12+it)|^{2} \mathbbm{1}_{\mathcal{H}(t)} dt,
$$
has the shape of a sum of $\approx T$ roughly independent terms. Provided that $T \rightarrow \infty$ (which exactly means $y=o(x)$), results like the Law of Large Numbers lead one to expect this to concentrate around its mean. We find this is indeed the case, but since each piece $\int_{n-\frac 12}^{n+\frac 12}|F_z(\frac 12+it)|^{2} \mathbbm{1}_{\mathcal{H}(t)} dt$ is heavy-tailed it is a delicate matter to establish this.

In fact, when we expand the square we need to deal with the expectation of a product of two factors $\E[|F_{z}(\frac 12+it_1)|^{2}\mathbbm{1}_{\mathcal{H}(t_1)} |F_{z}(\frac 12+it_2)|^{2} \mathbbm{1}_{\mathcal{H}(t_2)}]$. When $|t_1 - t_2| \rightarrow \infty$ we can show almost perfect decorrelation; this requires a slightly different analysis than in previous works (see Lemma~\ref{lem: concentration}, and its application inside the proof of Proposition~\ref{prop: concentration} below). We can also prove a decorrelation result when $|t_1-t_2|$ is smaller (see Lemma~\ref{lem: slice}), approximating the expectation by the product of two expectations (each with the corresponding barrier events involved), but now this can only be done for the parts of the Euler products involving sufficiently large primes in terms of $|t_1-t_2|$. {\em The barrier conditions $\mathcal{H}(t_1), \mathcal{H}(t_2)$ are invoked to control the small prime contributions.} More precisely, to succeed we need the barriers to restrict the small prime contribution to be (up to some normalization) a bit less than $\sqrt{T}$, so when squared inside $|F_{z}(1/2+it_1)|^{2}$ we get a factor {\em a bit smaller than the total integral length $T$}. The high probability barrier $\mathcal{G}(t_1)$ can only impose a restriction a bit larger than $\sqrt{T}$, but the refined barrier $\mathcal{H}(t_1)$ enables us to go a bit below this, and so complete the concentration argument. The factor $\min \{1, \frac{\log (x/y)}{\sqrt{\log \log x}}\} \sim \min \{1, \frac{\log T}{\sqrt{\log \log x}}\}$ that arises in $V(x, y)$ reflects this size $\approx \sqrt{T}$, on taking a logarithm we get a restriction $\approx \frac 12\log T$ (up to some recentering) on the logarithms of the partial Euler products (which are random sums over primes), the denominator $\sqrt{\log \log x}$ reflects the ``number of scales'' in the sums.

In summary, one should think there is a sort of competition between the effect of the heavy-tailedness of each piece $\int_{n-\frac 12}^{n+\frac 12}|F_z(\frac 12+it)|^{2} \mathbbm{1}_{\mathcal{H}(t)} dt$ (kept somewhat under control by the barrier $\mathcal{H}(t)$), and the concentration effect of summing $\approx T$ pieces. Provided $T \rightarrow \infty$, the concentration effect wins out, but with the ``rate'' of concentration (i.e. the exceptional probability that $V_f(x, y)$ is not close to $V(x, y)$) becoming worse when $T$ grows more slowly. Once $y \asymp x$, and so $T \asymp 1$, the heavy-tailedness dominates and one no longer gets concentration.

\subsection{Asymptotic of the fourth moment}
In the application of the (conditioned) martingale central limit theorem, the main issue is a certain conditional fourth moment computation. We need to demonstrate that all ``off-diagonal'' contributions are negligible. To illustrate the problem, one major task is to show that, with high probability over the $(f(p))_{p \leq z}$ and as $x\to \infty$, 
\[ 
\Big| \sum_{\substack{ m_1, m_2, m_3, m_4 \in \CA \\ m_1 m_2= m_3 m_4 \\ m_1 \neq m_3, m_2\neq m_4 \\ P(m_1)=P(m_3) \\ P(m_2)=P(m_4)}} \ \ \ 
\sum_{\substack{n_1, n_2, n_3, n_4 \\ \frac{x}{m_j} \le n_j \le \frac{x+y}{m_j} \\ P(n_j)\le z}} f(n_1)f(n_2)\overline{f(n_3) f(n_4)}  \Big| =o(V_f(x, y)^2),
\]
where $\CA$ is the set of integers in $(1,x+y]$ with all their prime factors $> z$.
Our approach is to write each inner sum over $n_j$ as an integral involving $F_z(s)$ by a suitable smoothed Perron formula: 
\[ 
\sum_{\substack{n \in [\frac{x}{m}, \frac{x+y}{m}]\\P(n)\le z}} f(n) =  \frac{1}{2\pi }\int_{\infty}^{\infty} F_z(\tfrac 12 +it) \Big(\frac{x}{m}\Big)^{\frac 12+it} W(t)dt,
\]
where the smooth weight $W(t)$ effectively restricts the integral to $|t|\le (\log x)^{100}$.  
  Then the quantity we need to estimate is a quadruple integral involving the product of four copies of random Euler products $F_z(\frac 12+ it_j)$ ($1\le j \le 4$), and importantly a sort of harmonic sum that comes from the $1/m^{s}$ factors, namely 
\[
G(t_1, t_2, t_3, t_4):= \sum_{\substack{m_1, m_2, m_3, m_4 >1\\ p|m_j \implies p\in (z,x+y] \\ m_1m_2=m_3m_4\\ m_1\neq m_3, m_2\neq m_4 \\ P(m_1)= P(m_3) \\ P(m_2) =P(m_4) }} \frac{1}{m_1^{\frac 12 + it_1}m_2^{\frac 12 + it_2}} \cdot \frac{1}{m_3^{\frac 12 - it_3}m_4^{\frac 12 - it_4}} . 
\]
One can show (with some technical work, see Lemmas \ref{lem4.4} and \ref{lem: R} below) that the $G(t_1, t_2, t_3, t_4)$ factor
provides a good saving unless $t_1, t_2, t_3, t_4$ are all very close to each other. Thus we may restrict our attention to the case that all four $t_j$ are close, and the resulting quadruple product would behave like the product of two squares $|F_z(1/2+it_1)|^{2}$ and  $|F_z(1/2+it_2)|^{2}$ with $t_1$ and $t_2$ close. Indeed, we are left to deal with a quantity like 
(with a suitable weight $W$)  
\[
\int_{\substack{t_1\approx t_2 \\ |t_1|, |t_2|\le (\log x)^{100}}} |F_z(1/2+it_1)|^{2}|F_z(1/2+it_2)|^{2}  |W(t_1)|^2 |W(t_2)|^2 dt_1 dt_2.
 \]

It would suffice to show that the expectation of this integral is ``small'' (essentially compared with $V(x,y)^2$, up to some scaling factors that we have suppressed). But as with \eqref{dream}, or a direct computation of the fourth moment, the expectation would in fact blow up. Instead, we use the method of barrier events again, and just like before, we need to put some barriers $\mathcal{G}^{*}(t)$ and $\mathcal
{H}^{*}(t)$ on the growth of the Euler products. With $\mathcal{G}^{*}(t)$ and $\mathcal
{H}^{*}(t)$ set appropriately, this final analysis becomes relatively straightforward, see Propositions \ref{prop4.6}, \ref{prop: no t} and the surrounding calculations below. We note, however, that this is a point in the proof where the choice of $z$ is crucial. We lose factors of the shape $(\frac{\log x}{\log z})^{O(1)}$ and need the saving coming from $\mathcal{H}^{*}(t)$, which is very limited, to (more than) compensate for this. Thus although $z$ cannot be too close to $x$ (in order for earlier steps of the argument to work), it also cannot be too far from $x$.

One small difference between the barrier events here and the previous $\mathcal{G}(t)$ and $\mathcal{H}(t)$, is that our integral here has long length $(\log x)^{100}$. Thus our barrier events must take the size of $t$ into account, in fact when $|t| \geq T$ we need the barrier to increase a bit with $t$ (becoming weaker) so that the exceptional probability of exceeding the barrier decreases proportional to $|t|$, otherwise a union bound on the exceptional probabilities would blow up. But the decay factors involving $W(t_1)$ and $W(t_2)$ compensate for this weakening.

\subsection{Further remarks}\label{subsec:furtherremarks}
We already mentioned that, for the appropriately normalized long sum $\frac{(\log \log x)^{\frac 14}}{\sqrt{x}} \sum_{n \leq x} f(n)$ of a Steinhaus random multiplicative function, a recent breakthrough of Gorodetsky and Wong~\cite{GW-Final} establishes convergence in distribution to a non-Gaussian, heavy tailed limit. A little earlier, S. Hardy~\cite{hardy} established the analogous result for the sub-sum $\frac{(\log \log x)^{\frac 14}}{\sqrt{x}} \sum_{n \leq x, P(n) > \sqrt{x}} f(n)$ over integers having a large prime factor. The simplifying condition $P(n) > \sqrt{x}$ means that Hardy does not require any martingale techniques (after conditioning on $(f(p))_{p \leq \sqrt{x}}$, he has a classical weighted sum of independent random variables to work with). The bulk of his task is understanding the conditional variance, which he finds to converge to a heavy-tailed distribution closely related to the total mass of critical multiplicative chaos (unlike the concentration that we find in the short interval setting here). Hardy's proof of convergence ultimately relies on some existing results from the theory of multiplicative chaos, but to make these applicable he must bring the conditional variance into an appropriate form. This uses a barrier-weighted mean square argument, sharing some features with our Proposition~\ref{prop: concentration} below.

Gorodetsky and Wong~\cite{GW-Final}, handling the full sum, require martingale theory. A major task for them is the analysis of the arising ``bracket process'', which is the same type of object as the conditional variance investigated by Hardy~\cite{hardy} and by us. Gorodetsky and Wong's beautiful proof has many features in common, and also many differences from, the work of Hardy~\cite{hardy} and our work here, and we only mention a few points. Firstly, to bring the conditional variance into a nice form we work mostly on the ``Fourier analytic'' side, and Hardy does likewise. Gorodetsky and Wong's bracket process has a bit more complicated structure, and they require a non-trivial ``truncation'' procedure on the physical side to produce an object they can work with. Secondly, Gorodetsky and Wong eschew the use of barriers, which are crucial to our work and which Hardy also makes some use of. Instead, they develop and use a new penalized second moment method, see their paper~\cite{GW-Final} for a detailed discussion of this. (Although behind this lie some of the same tools, like an approximate Girsanov theorem, that lie behind our barrier calculations.) Thirdly, note that about a third of our work here (almost all of section~\ref{Sec: CLT}) is the analysis of a conditional fourth moment contribution, to make the martingale central limit theorem applicable. Because of the form of martingale central limit theorem that they use, Gorodetsky and Wong do not require this type of calculation, but instead they require (and establish) demanding information about the nature of the convergence of their bracket process.

Inspecting our proofs (see Theorem \ref{thm: concentration} below), the reader will see that we show $V_f(x, y)\sim V(x, y)$ holds with probability $1- O((\log T)^{-\frac 15})$. Recall that $T \approx 2x/y$. On first glance this rate estimate may look weak, but in fact it probably is not far from the truth, apart from the precise exponent of $\log T$. For examining our integral expression for $V_f(x, y)$, we see that if any of the integrals $\int_{n-\frac 12}^{n+\frac 12}|F_z(\frac 12+it)|^{2} dt$, for $|n| \leq T$, exceeds a large multiple of $T \log T \frac{\log z}{\sqrt{\log \log z}}$ then $V_f(x, y)$ will be significantly larger than $V(x, y)$. If $T$ isn't too big compared with $z$ (i.e. if $y$ is reasonably close to $x$), the probability of this for any given $n$ should be $\approx \frac{1}{T\log T}$, see e.g. the discussion of tail probabilities in the introduction of Harper's paper~\cite{HarperLow}. And since these integrals should behave more or less independently for different $n$, the probability of it happening for {\em some} $|n| \leq T$ will be $\approx \frac{T}{T\log T} = \frac{1}{\log T}$. This further suggests that our arguments, although involved, are capturing the true behavior of our objects of study.

Theorem \ref{thm: main} solves the short interval story completely for Steinhaus random multiplicative functions, but not the Rademacher case. Most steps in our proof should transfer to the Rademacher case, either immediately or with quite obvious modifications, and we certainly expect the analogous theorem to hold (now with a real Gaussian limit, and ignoring any issues with counting square-free numbers when $y$ is very small). However, there is at least one place where some less trivial work would be needed. In the proof of Proposition~\ref{prop: concentration}, after applying a decorrelation estimate to the ``large'' prime contributions, we factor these out exploiting the fact that the distribution of $\mathbbm{1}_{\mathcal{H}(t)} \prod_{ z^{e^{-\tau}}<p\le z} |1-\frac{f(p)}{p^{\frac 12+it}}|^{-2}$ is exactly the same for all $t \in \R$ (because the joint distribution of the sequence $(f(p)p^{-it})_{p}$ is exactly the same for all shifts $t \in \R$). This ``translation invariance in law'' does not hold in the Rademacher case. It should be possible to address this (e.g. although translation invariance in law does not hold, it should ``almost'' hold provided $t$ isn't too small); we hope that an interested reader will take up this task.

\subsection{Organization and notations}
It will suffice to prove Theorem \ref{thm: main} in the range $\frac{x}{\sqrt{\log x}} \leq y = o(x)$, since smaller $y$ are already handled by the work of Soundararajan and Xu~\cite{SoundXu}.  Although not essential, restricting to this range for $y$ will streamline the writing of several of our arguments.

We write the conditional variance in a more useful asymptotic form, in terms of the random Euler product $F_z(s)$, in the fairly short Section~\ref{SEC: CONDVAR}. The concentration of conditional variance is established in Section~\ref{SEC: CONCEN} and the fourth moment computation together with other verifications of conditions in the martingale central limit theorem is done in Section~\ref{Sec: CLT}. Thus, Theorem~\ref{thm: main} is proved by combining results in Section~\ref{SEC: CONDVAR}, Section~\ref{SEC: CONCEN} and Section~\ref{Sec: CLT}.  Finally, we prove Proposition~\ref{prop: tailsprop} in Section~\ref{SEC: LONG SUM}.

We use standard notations of analytic number theory. Write $f(x)\ll g(x)$ and $f(x) =O(g(x))$ to denote that there exists a constant $C>0$ such that $|f(x)|\le Cg(x)$ for all $x$. Lastly  $f(x)\asymp g(x)$ means that $g(x)\ll f(x)\ll g(x)$.

\subsection*{Acknowledgements}
A.J.H. was supported in part by the Engineering and Physical Sciences Research Council of the United Kingdom [grant EP/V055755/1]. Some of the research for this paper was conducted when A.J.H. visited K.S. and M.W.X. in April 2023, and he would like to thank Stanford University for their hospitality during this visit. Some of the final writing up was done with support from the Simons Foundation and the Centre de Recherches Math\'ematiques, Montr\'eal, while A.J.H. was in residence as Aisenstadt Chair during the 2026 Universal Statistics in Number Theory thematic semester. K.S. is partially supported through a grant from the National Science Foundation; some of this work was also done while he was supported by a Simons Investigator grant from the Simons Foundation.  M.W.X. was supported by a Simons Junior Fellowship from the Simons Foundation. He appreciates the hospitality of Warwick Mathematics Institute, and he thanks Ye Tian for hosting him at the Morningside Center of Mathematics, where some of the writing up was done.  

We would like to thank Ofir Gorodetsky, Seth Hardy, Youness Lamzouri and Mo Dick Wong for their helpful comments on a draft of this paper. We especially thank Andrew Granville, whose comments and questions led us to determine an explicit form for the normalization factor $V(x,y)$.

For the purpose of open access, the authors have applied a Creative Commons Attribution (CC-BY) licence to any Author Accepted Manuscript version arising from this submission.

\section{Conditional variance}\label{SEC: CONDVAR}
Recall that $z = x^{\frac{1}{\log \log \log x}}$, and that we say a number $n$ is {\em $z$-smooth} if its largest prime factor $P(n)$ is $\leq z$. As is usual, we write $\Psi(x,z)$ to mean the count of $z$-smooth numbers $\leq x$ (although we will not need this notation very much).

Since the $f(n)$ are orthogonal, 
$$ 
\E \Big| \sum_{\substack{x \le n \le x+y \\ P(n)\le z   }  } f(n) \Big|^{2} = \sum_{\substack{x \le n \le x+y \\ P(n)\le z  }  } 1 = \Psi(x+y, z) - \Psi(x, z) + O(1) \ll \frac{y}{(\log\log x)^{100}} ,  
$$
where the last inequality follows upon using the sub-additivity $\Psi(x+y, z) - \Psi(x, z) \le \Psi(y, z)$ proved in \cite{Hildebrand85} (for $x,y \geq z$) and a classical estimate on $\Psi(y, z)$, see for example \cite[(1.12)]{Gran08}.   Since this is negligible compared with $V(x,y)$ in Theorem \ref{thm: main}, we see that we may discard the sub-sum over $z$-smooth numbers, and it will suffice to prove the theorem for $\sum_{\substack{x \le n \le x+y \\ P(n) > z  }  } f(n)$. Using the multiplicativity of $f$, this sum may be rewritten as
$$ 
\sum_{\substack{x \le n \le x+y \\ P(n) > z  }  } f(n) = \sum_{ \substack{ 1<m\le x+y \\ p|m \implies p>z}} f(m) \sum_{\substack{x/m\le n \le (x+y)/m \\ p|n \implies p\le z  }  } f(n) , 
$$
which we shall analyze by first conditioning on the behavior of $(f(p))_{p \leq z}$.

In this section, we establish an asymptotic expression for the conditional variance $V_f(x, y)$, which holds with asymptotic probability 1 (over realizations of the $(f(p))_{p \leq z}$). By orthogonality, note that the conditional variance satisfies 
\begin{equation}\label{eqn: Vf}
 V_f(x, y) =  \sum_{ \substack{ 1<m\le x+y \\ p|m \implies p>z}} \Big| \sum_{\substack{x/m\le n \le (x+y)/m \\ p|n \implies p\le z  }  } f(n) \Big|^{2}.    
\end{equation}

\begin{proposition}\label{prop: asmpy}
Let $y\ge x/(\log x)^{1/2}$ and $T=2/\log(1+y/x)$ be large, and put $\TT = T(\log T)^{100}$ . With probability $1-O( (\log T)^{-10} )$, we have
\begin{equation}\label{eqn: variance}
    V_f(x,y) = \frac{e^{-\gamma}}{4\pi} \frac{y}{\log z} \frac{1}{T} \int_{-\TT}^{\TT} \Big|F_z(\tfrac{1}{2}+it)\Big|^{2} K_T(t)^{2} dt +O\Big(\frac{y}{\sqrt{\log \log x}(\log T)^{50}}\Big),
\end{equation}
where $\gamma$ is Euler's constant, and
\begin{equation}\label{eqn: F}
  F_z(s):=  \sum_{\substack{n\ge 1 \\ p|n \implies p\le z}} \frac{f(n)}{n^{s}} = \prod_{p \leq z}\Big(1 - \frac{f(p)}{p^s} \Big)^{-1} , \quad K_T(t):= T \cdot \Big|  \frac{e^{\frac{1/2+it}{T}} - e^{-\frac{1/2 + i t }{T} }}{ 1/2+it}\Big|.  
\end{equation}
\end{proposition}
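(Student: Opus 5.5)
The plan is to pass from the discrete sum \eqref{eqn: Vf} over $z$-rough $m$ to a continuous integral, and then to apply a Plancherel identity in logarithmic variables. Write $h:=\log(1+y/x)=2/T$ and $S(w):=\sum_{w\le n\le w e^{h},\,P(n)\le z} f(n)$. Then the inner sum in \eqref{eqn: Vf} is exactly $S(x/m)$, up to an $O(1)$ boundary discrepancy at the upper endpoint $(x+y)/m$ versus $(x/m)e^{h}$, which I will check is harmless.

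\textbf{Step 1 (discrete to continuous).} The $z$-rough integers $m$ have local density $\omega(\log m/\log z)/\log z$, where $\omega$ is Buchstab's function. Since $\omega(u)=e^{-\gamma}+O(\Gamma(u)^{-1})$, this density is $\frac{e^{-\gamma}}{\log z}(1+o(1))$ once $m\ge z^{A}$ with $A\to\infty$ slowly. I would therefore aim to show that, with probability $1-O((\log T)^{-10})$,
\[ V_f(x,y)=\frac{e^{-\gamma}}{\log z}\int_{1}^{x+y}|S(x/v)|^2\,dv+\text{error}. \]
To make this rigorous without knowing rough numbers in very short intervals, I would not compare pointwise. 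Instead I would expand $|S(x/m)|^2$ by a smoothed Perron formula as a double integral in $t_1,t_2$ of $F_z(\tfrac12+it_1)\overline{F_z(\tfrac12+it_2)}$ against $m^{-1-i(t_1-t_2)}$. I would then use the asymptotic for the Dirichlet polynomial $\sum_{m\le X,\ m\ z\text{-rough}} m^{-1-i\tau}$. This sum is well approximated by $\frac{e^{-\gamma}}{\log z}\int v^{-1-i\tau}dv$ uniformly in $|\tau|\le (\log x)^{O(1)}$, apart from the contribution of $m\le z^{A}$ and a $\tau$-dependent error coming from the prime number theorem on primes $>z$.

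Two ranges need separate treatment.
\begin{itemize}
\item Small $m\le z^A$: here the density is not yet $e^{-\gamma}/\log z$. Their contribution has expectation $\ll \frac{y}{x}\sum_{m\le z^{A}} \frac{x}{m}\cdot\mathbf 1_{m\text{ rough}}\ll y\frac{A}{\log z}\cdot\log z^{A}/\log z$, which is small relative to the target error after Markov's inequality. The bound uses $\E|S(w)|^2\le \#\{$integers in $[w,we^h]\}$.
\item Large $m$, where $x/m$ is too small for $[x/m,(x+y)/m]$ to see many integers: this is handled the same way, by expectation and Markov.
\end{itemize}

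\textbf{Step 2 (Plancherel).} Substitute $w=x/v$ and then $w=e^u$, so that $dv=x e^{-u}du$. The main term becomes
\[ \frac{e^{-\gamma}x}{\log z}\int_{\R}\Big|\sum_{u\le \log n\le u+h}\frac{f(n)}{\sqrt n}\Big|^2 \cdot(1+o(1))\,du, \]
where I replace $e^{-u}|S(e^u)|^2$ by the $n^{-1/2}$-weighted version. The error in this replacement is relative $O(h)$, which is negligible since $h\to0$. By Plancherel, with $\widehat{\mathbf 1}$ the Fourier transform of $\mathbf 1_{[-h,0]}$ recentred, this equals
\[ \frac{1}{2\pi}\int|F_z(\tfrac12+it)|^2\Big|\frac{e^{h(1/2+it)}-1}{1/2+it}\Big|^2dt . \]
Centring the interval symmetrically produces exactly $\big|\frac{e^{(1/2+it)/T}-e^{-(1/2+it)/T}}{1/2+it}\big|^2=K_T(t)^2/T^2$. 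With $x/T^2\asymp y/T\cdot(1/2)$, this accounts for the constant $\frac{e^{-\gamma}}{4\pi}\frac{y}{T\log z}$ once the factors of $2$ are tracked.

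\textbf{Step 3 (truncation and errors).} Since $K_T(t)\ll \min(1,T/|t|)$, and $\E|F_z(\tfrac12+it)|^2\asymp\log z$ uniformly in $t$, the tail $|t|>\TT$ has expectation $\ll \frac{y}{T\log z}\cdot\log z\cdot T^2/\TT=y(\log T)^{-100}$. By Markov this is below the claimed error with probability $1-O((\log T)^{-10})$, and similarly for all other error terms. The error term $y/(\sqrt{\log\log x}(\log T)^{50})$ is designed to be compatible with $V(x,y)\gg y\log T/\sqrt{\log\log x}$.

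\textbf{Main obstacle.} I expect the main obstacle to be Step 1, namely getting a genuinely asymptotic replacement of the rough-number sum by $e^{-\gamma}/\log z$ times an integral. The $\tau$-dependent error in the rough Dirichlet series must be controlled uniformly over $|t_1-t_2|$ up to $(\log x)^{100}$, and off-diagonal pairs $t_1\ne t_2$ must be shown not to contribute. For those pairs I would use the oscillation of $\int v^{-1-i\tau}dv$ together with bounding $\E$ of the error integrand via $\E|F_z|^2\ll\log z$ and Cauchy--Schwarz.
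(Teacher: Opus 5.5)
Your overall skeleton matches the paper's: rough-number density $e^{-\gamma}/\log z$, logarithmic change of variables, Plancherel, and truncation with Markov. However, there are genuine gaps in two places.

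\textbf{The discrete-to-continuous step.} This is the heart of the proposition, and you leave it open. As set up, your route needs considerably more than you indicate. After the Perron expansion, consider any part of the error in approximating $\sum_{m\ \mathrm{rough}}m^{-1-i\tau}$ by $\frac{e^{-\gamma}}{\log z}\int v^{-1-i\tau}\,dv$ that does not decay in $\tau$. It has to be bounded in absolute value against $|F_z(\tfrac12+it_1)F_z(\tfrac12+it_2)W(t_1)W(t_2)|$, with $W$ your Perron weight. Since $\mathbb{E}\big(\int|F_z(\tfrac12+it)W(t)|\,dt\big)^2\gg(\log z)^{1/2}$, this forces that error to be roughly $\ll 1/(T\sqrt{\log z})$ uniformly for $|\tau|\le(\log x)^{O(1)}$. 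Sieve bounds for rough numbers in intervals save far less than a power of $\log x$ here, and their block-to-block fluctuations produce exactly such non-decaying errors. You would therefore need a much finer smooth model for the counting function of $z$-rough numbers, which you have not supplied.

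The paper needs none of this, because positivity and local constancy are used directly in physical space. For $m\in(\frac{x+y}{r+1},\frac{x+y}{r}]$ with $1\le r\le x^{0.1}$, one has $(x+y)/m\in[r,r+1)$ and $x/m\in[\frac{x}{x+y}r,\frac{x}{x+y}(r+1))$. So the inner sum equals $\sum_{\frac{x}{x+y}r\le n\le r,\ P(n)\le z}f(n)$ up to $O(1)$ terms, independently of $m$. Each block is an interval of length $\asymp x/r^2\gg x^{0.8}$, on which Iwaniec's sieve lemma counts the $z$-rough $m$ as $M\prod_{p\le z}(1-\frac1p)(1+O((\log\log x)^{-100}))$. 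The density error thus enters only as a relative error on nonnegative terms, and the $O(1+|S|)$ discrepancies are handled in expectation.

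Your small-$m$ estimate is also wrong as written. We have $\sum_{1<m\le z^A,\ m\ \mathrm{rough}}1/m\asymp A$ (the primes in $(z,z^2]$ alone give about $\log 2$). So bounding $\mathbb{E}|S(x/m)|^2$ by the number of all integers in $[x/m,(x+y)/m]$ yields $\asymp yA$, which is not small. These terms are negligible only because the $n$ are $z$-smooth and $x/m$ is large. For $m<(x+y)/x^{0.1}$, Hildebrand's inequality $\Psi(X+Y,z)-\Psi(X,z)\le\Psi(Y,z)$ together with the standard bound for $\Psi(Y,z)$ gives $\mathbb{E}|S(x/m)|^2\ll y/(m(\log\log x)^{100})$. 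Smoothness, through Rankin's trick, is likewise what allows the $u$-integral to be extended to all of $\mathbb{R}$.

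\textbf{The error terms.} First-moment bounds cannot give the stated error. Your tail bound $\ll y(\log T)^{-100}$ in expectation, combined with Markov, yields the error $y/(\sqrt{\log\log x}(\log T)^{50})$ only with probability $1-O(\sqrt{\log\log x}\,(\log T)^{-50})$. This is useless when $(\log T)^{40}$ is small compared with $\sqrt{\log\log x}$. That is precisely the regime where $V(x,y)\asymp y\log T/\sqrt{\log\log x}$ is much smaller than $y$. When $\log T$ is a small power of $\log\log x$, your error bound even exceeds $V(x,y)$. The same problem affects two other terms:
\begin{itemize}
\item the correction from the normalizing constant ($e^{1/T}/T=\frac{y}{2x}+O(T^{-2})$);
\item the cost of replacing $e^{-u/2}$ by $n^{-1/2}$, which in expectation is of order $y/T$.
\end{itemize}

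The paper instead bounds $\mathbb{E}[E^q]$ with $q=1-1/\log T$, where $E=T^{-2}\int_{|t|\le\widetilde{T}}|F_z(\tfrac12+it)|^2dt+T\int_{|t|>\widetilde{T}}|F_z(\tfrac12+it)|^2t^{-2}dt$. It splits into unit intervals, uses $(\sum a_n)^q\le\sum a_n^q$ and translation invariance in law, and applies Harper's low-moment estimate $\mathbb{E}\big[\big(\int_0^1|F_z(\tfrac12+it)|^2dt\big)^q\big]\ll\big(\frac{\log z}{1+(1-q)\sqrt{\log\log z}}\big)^q$. Taking $q$ close to $1$ preserves the $t^{-2}$ decay, while $q<1$ supplies the factor $\sqrt{\log\log z}$ that the first moment misses.

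Incidentally, the replacement by $n^{-1/2}$ is unnecessary. The kernel $|\frac{e^{h(1/2+it)}-1}{1/2+it}|^2$ you wrote is the exact Plancherel kernel for $e^{-u/2}S(e^u)$ itself, whereas $n^{-1/2}$ weights would give $4\sin^2(ht/2)/t^2$.
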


 The specific value $T=2/\log(1+y/x)$ simply arises from a change of variables in the course of the proof; note that $T=2x/y + O(1)$.   In section \ref{SEC: CONCEN}, we will prove that with probability $1-O((\log T)^{-\frac 15})$, 
\[
V_f(x, y) \gg  \frac{y}{\sqrt{\log \log x}}, 
\]
so that the remainder term in \eqref{eqn: variance} is negligible with high probability. 
\bigskip

\begin{proof}[Proof of Proposition~\ref{prop: asmpy}]
We first show that the contributions from small $m$ in \eqref{eqn: Vf} are negligible, analogously to our initial discarding of the $z$-smooth sub-sum (which would correspond to $m=1$). By small we mean terms with $1< m < (x+y)/x^{0.1}$, and the expected contribution of such terms to $V_f(x,y)$ is  
\begin{align*}
\E\Big[ \sum_{ \substack{1<m < (x+y)/x^{0.1} \\ p|m \implies p>z}} \Big| \sum_{\substack{x/m\le n \le (x+y)/m \\ p|n \implies p\le z  }  } f(n) \Big|^{2} \Big] &= \sum_{ \substack{1<m < (x+y)/{x^{0.1}} \\ p|m \implies p>z}} \sum_{\substack{x/m\le n \le (x+y)/m \\ p|n \implies p\le z  }  } 1 \\ 
&\ll \sum_{ \substack{1<m < (x+y)/{x^{0.1}} \\ p|m \implies p>z}} \frac{y}{m  (\log \log x)^{100}} 
\ll \frac{y \log\log\log x}{(\log \log x)^{100}} .
\end{align*}
By Markov's inequality it follows that with probability at least $1-O({(\log \log x)^{-10}})$, the contribution from $m < (x+y)/x^{0.1}$ is at most $O(y/(\log \log x)^{60})$.

We next focus on those $m\ge (x+y)/x^{0.1}$, and we group these terms in the ranges $(x+y)/(r+1) < m \le (x+y)/r$ where $r$ is an integer in the range $1\le r\le x^{0.1}$.  Thus the contribution of $m \ge (x+y)/x^{0.1}$ to $V_f(x,y)$ is 
\begin{equation*}
 \sum_{1\le r \le x^{0.1}}     \sum_{ \substack{\frac{x+y}{r+1} < m \le  \frac{x+y}{r} \\ p|m \implies p>z}} \Big| \sum_{\substack{x/m\le n \le (x+y)/m \\ p|n \implies p\le z  }  } f(n) \Big|^{2}. 
\end{equation*}
Given $r$, for all $(x+y)/(r+1) < m \le (x+y)/r$ the inner sum over $n$ above varies by at most $O(1)$.   Therefore the above equals 
\begin{equation}\label{eqn: short}
 \sum_{1\le r \le x^{0.1}}     \sum_{ \substack{\frac{x+y}{r+1} < m \le \frac{x+y}{r} \\ p|m \implies p>z}} \Big(\Big| \sum_{\substack{ \frac{x}{x+y} \cdot r
\le n \le r \\ p|n \implies p\le z  }  } f(n) \Big|^{2} +O\Big(1 +\Big| \sum_{\substack{ \frac{x}{x+y} \cdot r
\le n \le r \\ p|n \implies p\le z  }  } f(n) \Big|\Big) \Big).
\end{equation}
This style of argument also arises in (for example) the proof of Proposition 1 of Harper~\cite{Harperlargevalue}, and Lemma 1.2 of Gorodetsky and Wong~\cite{GW-Short}. 

The inner sum is now independent of $m$, so we can use the following standard sieve result to get rid of the $z$-roughness condition in the sum over $m$. 

\begin{lemma}[See Theorem 3 of Iwaniec~\cite{Iwaniec}]\label{lem: sieve}
    Let $\mathcal{M}$ be a set of $M$ integers such that for any positive integer $d$, the number of multiples of $d$ in $\mathcal{M}$ lies between $\frac{M}{d}-1$ and $\frac{M}{d} +1$. Let $s:= \frac{\log M}{\log z}$, and suppose $s < \frac{\log M}{(\log\log 3M)^6}$. Then 
    \[\#\{ n\in \mathcal{M}: p|n \implies p>z \} = M \prod_{p\le z} \Big(1-\frac{1}{p}\Big) \Big(1+ O\Big(\Big(\frac{e}{s\log s}\Big)^{s}\Big)\Big).   \]
\end{lemma}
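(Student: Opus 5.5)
The plan is to derive this from the fundamental lemma of sieve theory in dimension one, in the sharp form coming from Rosser's (beta) sieve. Set $P(z)=\prod_{p\le z}p$, $g(d)=1/d$ and $r_d=\#\{n\in\mathcal M: d\mid n\}-M/d$. By hypothesis $|r_d|\le 1$ for every $d$. The quantity to estimate is the sifted count $S(\mathcal M,z)=\#\{n\in\mathcal M:(n,P(z))=1\}$, and the main term is $M V(z)$ with $V(z)=\prod_{p\le z}(1-1/p)\asymp 1/\log z$.

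First I will apply the Rosser upper and lower sieve weights $\lambda_d^{\pm}$ of level $D$. These are supported on squarefree $d\mid P(z)$ with $d<D$, and $|\lambda_d^\pm|\le 1$. With $\sigma=\log D/\log z$, Iwaniec's analysis of the Rosser sieve in dimension $\kappa=1$ gives
\[
S(\mathcal M,z)=MV(z)\Big(1+O\big(e^{-\sigma(\log\sigma-\log\log\sigma-\log 2+o(1))}\big)\Big)+O\Big(\sum_{d<D,\ d\mid P(z)}|r_d|\Big).
\]
The first error term is of the shape $(e/(\sigma\log\sigma))^{\sigma}$; I will take this sharp fundamental-lemma bound as the input from Iwaniec's Theorem 3 rather than re-derive it.

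Next I choose $D$. Taking $D=M$ is impossible, because the remainder would be of size up to $M$, which exceeds the main term $MV(z)\asymp M/\log z$. The key point is that the remainder only involves $z$-smooth squarefree $d<D$, so
\[
\sum_{d<D,\ d\mid P(z)}|r_d|\le \Psi(D,z)\le D\exp\big(-(1+o(1))\,\sigma\log\sigma\big).
\]
This uses the standard estimate $\Psi(X,z)\le X u^{-(1+o(1))u}$ with $u=\log X/\log z$. I will take $D=M/(\log M)^{A}$ for a suitable constant $A$. Then $\sigma=s-A\log\log M/\log z$. The remainder bound becomes $M(\log M)^{-A}\,(s\log s)^{-(1+o(1))s}$, which is $\ll MV(z)(e/(s\log s))^{s}$ once $A\ge 2$, because $V(z)^{-1}\ll\log z\le\log M$.

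Finally I must replace $\sigma$ by $s$ in the main error term, and this is where the hypothesis $s<\log M/(\log\log 3M)^6$ comes in. It is equivalent to $\log z>(\log\log 3M)^6$, so $s-\sigma=A\log\log M/\log z=O((\log\log M)^{-5})$. Hence $(e/(\sigma\log\sigma))^{\sigma}$ and $(e/(s\log s))^{s}$ differ by a factor $\exp(O((s-\sigma)\log s))$. This factor is $1+o(1)$ provided $\log s\ll(\log\log M)^5$, and that holds since $s\le\log M$. The $o(1)$ inside the exponent from the Rosser analysis must also be absorbed. I expect this bookkeeping, together with matching the precise constant $e$ in $(e/(s\log s))^s$, to be the main obstacle: it requires the fully explicit form of the Rosser-sieve functions $F,f$ in dimension one, namely $F(\sigma)-1$ and $1-f(\sigma)$ being $\ll(e/(\sigma\log\sigma))^{\sigma}$, rather than the cruder Brun-type bound $e^{-\sigma}$. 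For small $s$ (bounded), the statement is just the trivial bounds $0\le S\le M$ combined with the Rosser sieve at level $D$ with $\sigma$ bounded, so only large $s$ needs the argument above.
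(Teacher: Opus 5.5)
There is a genuine gap, and it sits at the one step that carries the content of the lemma: a relative error of Dickman shape $(e/(\sigma\log\sigma))^{\sigma}$ in the sieve main term.

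The bound you actually write down, $e^{-\sigma(\log\sigma-\log\log\sigma-\log 2+o(1))}$, is the usual fundamental-lemma shape. It equals $(2\log\sigma/\sigma)^{\sigma}e^{o(\sigma)}$, which exceeds $(e/(\sigma\log\sigma))^{\sigma}$ by a factor $\exp\bigl(\sigma(2\log\log\sigma+\log 2-1+o(1))\bigr)\to\infty$. So the rest of your argument (the choice $D=M(\log M)^{-A}$ and the passage from $\sigma$ to $s$) can only give an error of shape $(C\log s/s)^{s}$, not the one claimed.

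Taking the sharp version ``as input from Iwaniec's Theorem 3'' is circular. The paper gives no proof of this lemma and simply quotes that theorem, so the citation \emph{is} the statement.

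Your suggested repair does not close the gap either. Knowing that $F(\sigma)-1$ and $1-f(\sigma)$ are $\ll(e/(\sigma\log\sigma))^{\sigma}$ is not enough, because the Rosser inequalities at a finite level $D$ read $S\le XV(z)(F(\sigma)+E_D)+R$ and $S\ge XV(z)(f(\sigma)-E_D)-R$. Here $E_D$ is an additive error decaying only like a negative power of $\log D$, and it is not proportional to $F(\sigma)-1$. The hypothesis allows $s$ as large as $\log M/(\log\log 3M)^6$. Once $\sigma\log\sigma$ is large compared with $\log\log M$, this $E_D$ swamps $(e/(\sigma\log\sigma))^{\sigma}$.

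What is missing is an argument controlling the sifted count itself to Dickman precision. One route is to go inside the Rosser truncation and bound the boundary terms $S(\mathcal M_{p_1\cdots p_n},p_n)$ directly. That is where $|r_d|\le 1$ for all $d$ and $\log z>(\log\log 3M)^6$ would have to be used. Nothing in the proposal supplies this.

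Two smaller points. First, the remainder step needs more than $\Psi(X,z)\le Xu^{-(1+o(1))u}$: an unspecified $o(1)$ in the exponent is not negligible against $(e/(s\log s))^{s}$ when $s$ is large. Instead use Hildebrand's bound $\Psi(D,z)\ll D\rho(\sigma)$, valid since $\log z>(\log\log 3M)^6$, together with $\rho(\sigma)\le(e/(\sigma\log\sigma))^{\sigma}$ for large $\sigma$.

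Second, note what the paper actually needs. There $s\asymp\log\log\log x$, $M\gg x^{0.8}$, and only a saving $O((\log\log x)^{-100})$ is required. For that, the standard fundamental lemma you displayed is already enough, taken at level $D=M^{1/2}$ with the trivial remainder bound $\sum_{d<D}|r_d|\le D$. It is only the sharp form of the statement that your route does not reach.
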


Apply the lemma to $\mathcal{M} := \{\frac{x+y}{r+1} < m \le \frac{x+y}{r}\}$, so that $M\asymp x /r^{2} \gg x^{0.8}$ and $s \asymp \frac{\log x}{\log z} \asymp \log \log \log x$, and the error term gives a saving factor of order $O((\log \log x)^{-100})$, say. Thus the quantity in \eqref{eqn: short} is
\begin{align*}
 \sum_{1\le r \le x^{0.1}} \Big( \int_{\frac{x+y}{r+1}}^{\frac{x+y}{r}} dw \Big) \Big(\Big| \sum_{\substack{ \frac{x}{x+y} \cdot r
\le n \le r \\ p|n \implies p\le z  }  } f(n) \Big|^{2} &+O\Big(1 + \Big| \sum_{\substack{ \frac{x}{x+y} \cdot r
\le n \le r \\ p|n \implies p\le z  }  } f(n) \Big|\Big)  \Big)\\
& \times \prod_{p\le z} \Big(1-\frac{1}{p}\Big) \Big(1 + O((\log \log x)^{-100}) \Big), 
\end{align*}
which equals, upon using Mertens's theorem, 
\begin{equation}\label{eqn: Aaftersieve}
    \int_{(x+y)/x^{0.1}}^{x+y} \Big( \Big| \sum_{\substack{ \frac{x}{w} 
\le n \le \frac{x+y}{w} \\ p|n \implies p\le z  }  } f(n) \Big|^{2} + O\Big(1+\Big| \sum_{\substack{ \frac{x}{w} 
\le n \le \frac{x+y}{w} \\ p|n \implies p\le z  }  } f(n) \Big|\Big) \Big) \frac{e^{-\gamma}}{\log z}  \Big(1 + O((\log \log x)^{-100}) \Big)  dw.
\end{equation}

The contribution of all the error terms above is 
$$ 
\ll (\log z)^{-1} \int_{(x+y)/x^{0.1}}^{x+y} \Big( 1+ \Big| \sum_{\substack{ \frac{x}{w} 
\le n \le \frac{x+y}{w} \\ p|n \implies p\le z  }  } f(n) \Big| + \Big| \sum_{\substack{ \frac{x}{w} 
\le n \le \frac{x+y}{w} \\ p|n \implies p\le z  }  } f(n) \Big|^2 (\log \log x)^{-100} \Big) dw. 
$$ 
Take the expectation of this quantity, which is (using Cauchy--Schwarz) 
\begin{align*} 
&\ll (\log z)^{-1} \ \int_{(x+y)/x^{0.1}}^{x+y} \Big( 1 + \Big(\E\Big[  \Big| \sum_{\substack{ \frac{x}{w} 
\le n \le \frac{x+y}{w} \\ p|n \implies p\le z  }  } f(n) \Big|^2 \Big]\Big)^{\frac 12} + \Big( \frac{y}{w} +1\Big) (\log \log x)^{-100} \Big) dw 
\\ 
&\ll \frac{x}{\log z} + \frac{y \log x}{\log z} (\log \log x)^{-100} + \frac{\sqrt{yx}}{\log z} \ll \frac{y}{(\log \log x)^{99}}.
\end{align*} 
Here we used our assumption that $y\ge x/(\log x)^{\frac 12}$ to simplify the contribution.  By Markov's inequality, we conclude that with probability at least $1-O((\log \log x)^{-10})$, 
the contribution of these error terms is $O({y}/{(\log \log x)^{60}})$.

Our work so far shows that with probability $1- O((\log \log x)^{-10})$, 
\begin{equation*}
    V_f(x, y) =  \frac{e^{-\gamma}}{\log z} \int_{(x+y)/{x^{0.1}}}^{x+y}  \Big| \sum_{\substack{ \frac{x}{w} 
\le n \le \frac{x+y}{w} \\ p|n \implies p\le z  }  } f(n) \Big|^{2} dw  + O\Big( \frac{y}{(\log \log x)^{60}} \Big).
\end{equation*}
Writing $\delta =y/x$, and with the change of variables $u=x/w$, the relation above becomes 
\begin{equation}\label{eqn: 28}
   V_f(x, y) =  \frac{e^{-\gamma} x}{\log z} \int_{\frac{x}{x+y}}^{\frac{x^{1.1}}{x+y}}  \Big| \sum_{\substack{ u
\le n \le (1+\delta)u \\ p|n \implies p\le z  }  } f(n) \Big|^{2} \frac{du}{u^{2}} + O\Big( \frac{y}{(\log \log x)^{60}} \Big).
\end{equation}

We wish to extend the integral in \eqref{eqn: 28} to infinity.  The expected error induced in doing so is 
$$
\ll \frac{x}{\log z} \int_{\frac{x^{1.1}}{x+y}}^{\infty} \E\Big [   \Big| \sum_{\substack{ u
\le n \le (1+\delta)u \\ p|n \implies p\le z  }  } f(n) \Big|^{2} \Big]\frac{du}{u^{2}} = \frac{x}{\log z} \int_{\frac{x^{1.1}}{x+y}}^{\infty} \sum_{\substack{ u
\le n \le (1+\delta)u \\ p|n \implies p\le z  }  }  1 \frac{du}{u^2} \ll \frac{x}{\log z} \sum_{\substack{ n> \frac{x^{1.1}}{x+y} \\ p|n \implies p\le z}} \frac{\delta}{n}. 
$$
Using Mertens's theorems (and here crucially exploiting the $z$-smoothness condition) it follows that   
\begin{align*}
\sum_{\substack{ n> \frac {x^{1.1}}{x+y} \\ p|n \implies p\le z}} \frac{1}{n} &\ll (x^{0.1})^{-1000/\log z} \sum_{\substack{ n\ge 1 \\ p|n \implies p\le z}} \frac{1}{n^{1-1000/\log z}} 
\le (\log \log x)^{-100} \prod_{p\le z} \Big(1 -\frac{1}{p^{1-1000/\log z}} \Big)^{-1}\\
 &\ll (\log \log x)^{-100} \log z.   
\end{align*} 
 By Markov's inequality, we conclude that with probability at least $1-O((\log \log x)^{-10})$ the error induced by extending the integral in \eqref{eqn: 28} to infinity is $\ll y/(\log \log x)^{60}$.   Thus with probability at least $1-O((\log \log x)^{-10})$ we have 
\begin{equation}\label{eqn: main}
V_f(x, y) = \frac{e^{-\gamma}x}{\log z}  \int_{\frac{1}{1+\delta}}^{+\infty} \Big|\sum_{\substack{u \le n \le (1+\delta)u \\ p|n \implies p\le z}} f(n)\Big|^{2} \frac{du}{u^{2}} +O\Big(\frac{y}{(\log \log x)^{60}}\Big).   
\end{equation}

Recall that $\delta= y/x$ so that $T=2/\log (1+\delta)$, and make the change of variables $u=e^{v-1/T}$ so that $u(1+\delta) = e^{v+1/T}$.  
Then the integral in \eqref{eqn: main} becomes  
  $$
   \frac{e^{-\gamma }x}{\log z} 
 \cdot e^{1/T} \int_{-\infty}^{+\infty}  |h(v)|^{2} dv, 
 \ \ \text{where} \ \  
   h(v):= \sum_{\substack{e^{v-1/T} \le n \le e^{v+ 1/T} \\ p|n \implies p\le z}} f(n) e^{-v/2}.
  $$
   The Fourier transform of $h(v)$ is (it is convenient to omit the $2\pi$ in the normalization here)  
\begin{align*}
    \hat{h}(t) = \int_{-\infty}^{+\infty} h(v)e^{-i vt } dv &= \sum_{\substack{n\ge 1 \\ p|n\implies p \leq z}} f(n) \int_{\log n - \frac{1}{T}}^{\log n + \frac{1}{T}} e^{-i v t -v/2} dv\\ &= \sum_{\substack{n\ge 1\\ p|n\implies p \leq z}} \frac{f(n)}{n^{\frac{1}{2}+ i t}} \Big( \frac{e^{\frac{1/2+it}{T}} - e^{-\frac{1/2+ i t }{T} }}{1/2+it}\Big). 
\end{align*}
By the Plancherel formula the integral in \eqref{eqn: main} becomes  
\[  
\frac{e^{-\gamma} x}{\log z}  \frac{e^{1/T}}{2\pi T^2}  \int_{-\infty}^{+\infty} |F_z(\tfrac 12+it)|^2 K_T(t)^2 dt, 
\] 
with $F_z(s)$ and $K_T(t)$ as in \eqref{eqn: F}.   
Since $e^{1/T}/T = \delta/2 + O(1/T^2)$ and $K_T(t)^2 \ll \min (1, T^2/(1+t^2))$, we may write the above as 
\begin{equation} 
\label{2.8} 
\frac{e^{-\gamma} y}{\log z} \frac{1}{4\pi T} \int_{-\TT}^{\TT} |F_z(\tfrac 12+it)|^2 K_T(t)^2 dt + O\Big( \frac{y}{\log z} E \Big), 
\end{equation} 
where 
\begin{equation} 
\label{2.9} 
E = \frac{1}{T^2} \int_{|t|\le \TT} |F_z(\tfrac 12+it)|^2 dt + T \int_{|t| > \TT} |F_z(\tfrac 12+it)|^2 \frac{dt}{t^2}. 
\end{equation} 

We now estimate the expected value of $E^q$ for a suitable $0<q<1$, which will enable us to show that with high probability $E$ is suitably small.   Divide the integrals in $E$ into intervals of length $1$, from $n$ to $n+1$ for $n\in {\mathbb Z}$.   For non-negative real numbers $a_n$ and $0<q <1$, note that  $(\sum_n a_n)^q \le \sum_n a_n^q$.   Thus we find that $\E[E^q]$ is 
$$ 
\ll  \frac{1}{T^{2q}} \sum_{|n| \le \TT+1} \E \Big[ \Big( \int_{n}^{n+1} |F_z(\tfrac 12+it)|^2 dt \Big)^q \Big] 
+   \sum_{|n| > \TT} \frac{T^q}{|n|^{2q}}  \E \Big[ \Big( \int_{n}^{n+1} |F_z(\tfrac 12+it)|^2 dt \Big)^q \Big].
$$ 
Now using the translation invariance in law of the random Euler product $F_z(\frac{1}{2} +it)$, and a result of Harper~\cite[Section 4]{HarperLow}, we find that uniformly for $0<q<1$ 
$$ 
\E\Big[ \Big( \int_{n}^{n+1} |F_z(\tfrac 12+it)|^2 dt \Big)^q \Big]  = \E\Big[ \Big( \int_{0}^{1} |F_z(\tfrac 12+it)|^2 dt \Big)^q \Big] \ll  \Big(\frac{\log z}{1 + (1-q)\sqrt{\log \log z}}\Big)^{q}.   
$$
Taking $q=1-1/\log T$,  and noting $\TT =T(\log T)^{100}$,  it follows that 
 \begin{align*}
 \E[E^{1-1/\log T}] &\ll \Big( \frac{(\log T)^{100}}{T} + \frac{1}{(\log T)^{100}}\Big) \Big( \frac{\log z \log T}{\sqrt{\log \log z}}\Big)^{1-1/\log T} 
\\
& \ll \frac{1}{(\log T)^{99}} \Big(\frac{\log z}{\sqrt{\log \log z}}\Big)^{1-1/\log T}. 
\end{align*}
Notice it is crucial here to take $q$ close to 1 (to capture the decay of ${dt}/{t^2}$), but strictly less than 1 (to pick up the better than square-root cancellation factor $\sqrt{\log\log z}$ in the denominator). Markov's inequality now implies that with probability at least $1-O((\log T)^{-49})$ one has $E \ll (\log T)^{-50} (\log z)/\sqrt{\log \log z}$.   
 
Putting everything together,  with probability at least $1-O((\log T)^{-10})$, say (over all realizations of the $(f(p))_{p \leq z}$) we have
\begin{equation}\label{eqn: Vtrunc}
V_f(x, y) = \frac{e^{-\gamma}}{4\pi} \frac{y}{\log z} \frac{1}{T} \int_{-\TT}^{\TT} \Big|F_z(\tfrac{1}{2}+it)\Big|^{2}|K_T(t)|^{2} dt +O\big(\frac{y}{\sqrt{\log \log x}(\log T)^{50}}\big).
\end{equation} 
This completes the proof of Proposition~\ref{prop: asmpy}. 
\end{proof}

  \section{Concentration of the conditional variance}\label{SEC: CONCEN}
 In this section, we prove that with asymptotic probability 1 over realizations of the $(f(p))_{p \leq z}$, the conditional variance $V_f(x, y)$ is concentrated around a deterministic quantity $V(x,y)$ with the properties claimed in \eqref{eqn: V} and \eqref{eqn: hVexplicit}. In the next section, this will allow us to establish an unconditional central limit theorem for $\sum_{\substack{x \le n \le x+y \\ P(n) > z  }  } f(n)$ (and thus for $\sum_{x\le n \le x+y}f(n)$).

 \begin{theorem}[Concentration of conditional variance]\label{thm: concentration} Let $x$ be large, and let $y$ be in the range $x/(\log x)^{\frac 12}\le y\le x$.  Put 
 $z:= x^{\frac{1}{\log \log \log x}}$ and $T=2/\log (1+y/x)$.   Let $f$ be a Steinhaus random multiplicative function and let $V_f(x, y)$ be the conditional variance defined as in \eqref{eqn: Vf}.  Then there exists a deterministic quantity $\mu= \mu(x,y)$ such that with probability $1- O( (\log T)^{-\frac 15})$, we have
     \[
     V_f(x, y) = e^{-\gamma } \frac{y}{\log z} \mu \Big(1  + O\Big(\frac 1{(\log T)^{\frac 15}} \Big)\Big),
     \]
     where the quantity $\mu$ satisfies 
     \[
     \mu \asymp \log z \cdot  \min\Big\{1, \frac{\log T}{\sqrt{\log \log x}}\Big\}. 
     \]
  Further, as $T \rightarrow \infty$ we have the asymptotic 
  $$
  \mu \sim (e^{\gamma} \log z) \frac{1}{\sqrt{2\pi}} \int_{-\kappa}^{\kappa}  e^{-u^{2}/2} du, 
  \qquad \text{where} \qquad \kappa = \kappa(x,y):= \frac{\log T}{\sqrt{2\log \log z}}. 
  $$ 
  \end{theorem}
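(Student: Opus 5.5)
Starting from Proposition~\ref{prop: asmpy}, it suffices to show that the random integral
\[ I := \frac{1}{4\pi T}\int_{-\TT}^{\TT} |F_z(\tfrac12+it)|^2 K_T(t)^2\,dt \]
satisfies $I = \mu(1+O((\log T)^{-1/5}))$ with probability $1-O((\log T)^{-1/5})$, and that $\mu \gg \log z/\sqrt{\log\log x}$, so that the error term in \eqref{eqn: variance} is absorbed. The plan has three stages, carried out in order.

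\textbf{Stage 1 (barriers).} Parametrize scales by $\log\log$ of the prime, and write the random walk $S_k(t)=\sum_{z^{e^{-k-1}}<p\le z^{e^{-k}}}\Re\log(1-f(p)p^{-1/2-it})^{-1}$ summed downward from $z$. Define $\mathcal{G}(t)$ to be the event that, for every $k$, the partial products over $p\le z^{e^{-k}}$ are at most roughly $\frac{\log z}{e^k}\sqrt{T}(\log T)^{C}$, with a slowly growing cushion. Using the union bound over $O(\TT)$ unit intervals, translation invariance in law, and the standard Gaussian-type tail and ballot estimates for these sums (as in Harper's work), show that $\mathcal{G}(t)$ holds for all $|t|\le\TT$ with probability $1-O((\log T)^{-1})$. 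On this event, replace $I$ by $I_{\mathcal{G}}$. Next define the refined barrier $\mathcal{H}(t)$, in which the cap is at $\sqrt{T}$ divided by a power of $\log T$. Show $\E[\mathbbm{1}_{\mathcal{G}(t)}\mathbbm{1}_{\mathcal{H}(t)^c}|F_z|^2]$ is $O((\log T)^{-1/5})\,\E[\mathbbm{1}_{\mathcal{H}(t)}|F_z|^2]$. The method is a Girsanov tilt by $|F_z|^2$: under the tilted measure the walk becomes a Gaussian walk with drift, and the event "between the two barriers" has small probability because the barriers differ only by a $\log\log T$-scale shift. Markov's inequality then lets us replace $I$ by $I_{\mathcal{H}}$.

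\textbf{Stage 2 (mean, and definition of $\mu$).} Set $\mu := e^{\gamma}\,\E[I_{\mathcal{H}}]$ (with normalizations chosen so that it matches the theorem). By translation invariance, $\E[\mathbbm{1}_{\mathcal{H}(t)}|F_z(\frac12+it)|^2]$ is independent of $t$. It equals $\E|F_z|^2 \asymp e^{\gamma}\log z$ times the probability that the tilted random walk stays below the barrier. Since $\frac{1}{4\pi T}\int K_T^2\,dt \to 1$, we get $\mu \asymp \log z\cdot \mathbb{P}(\text{tilted walk} \le \tfrac12\log T + O(\log\log T) \text{ at the endpoint})$. The variance of the walk is $\sim\frac12\log\log z$, and the walk is recentered symmetrically. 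Evaluating this probability by a central limit theorem for the tilted walk gives $\frac{1}{\sqrt{2\pi}}\int_{-\kappa}^{\kappa}e^{-u^2/2}du$. For this to hold, the barrier must effectively constrain only the endpoint (the two-sided window arises from $\log|F|^2$ constraints together with the recentering); the lower-order $\log\log T$ shifts are negligible relative to $\kappa$ as $T\to\infty$. The same computation yields the order of magnitude $\min\{1,\log T/\sqrt{\log\log x}\}$.

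\textbf{Stage 3 (concentration; the main obstacle).} Prove $\E[|I_{\mathcal{H}}-\E I_{\mathcal{H}}|^2] = O((\log T)^{-1/2}\mu^2)$ and conclude by Chebyshev. Expanding the square gives a double integral of covariances over $t_1,t_2$. For $|t_1-t_2|$ large, use Lemma-type decorrelation: the Euler products at $t_1,t_2$ are nearly independent, including the barrier indicators. For $|t_1-t_2|=\Delta$ small, split the primes at $e^{1/\Delta}$. The large primes decorrelate and factor out via translation invariance, while the small prime part is bounded using the $\mathcal{H}$ barrier, which caps it by $T(\log T)^{-c}$. Integrated over $O(T)$ pairs, this gives a total of $o(T^2\mu^2/T^2)$ after normalization. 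The delicate point is ensuring that the barrier cap sits strictly below $\sqrt{T}$, while $\mathcal{H}$ still costs only a $(\log T)^{-1/5}$ fraction of the mean. Balancing the cushion in Stage 1 against this saving is what fixes the exponent $1/5$.
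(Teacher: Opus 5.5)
Your three-stage architecture matches the paper's: $\mathcal{G}(t)$ holds for all $|t|\le\TT$, Markov's inequality passes to $\mathcal{H}(t)$, a barrier-restricted variance bound follows, and $\mu$ is the $\mathcal{H}$-restricted mean. However, two steps fail as you describe them.

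\textbf{Stage 2 rests on a false mechanism.} The recentred tilted walk has mean zero. If the barrier constrained only its endpoint, the probability would be $\P(N(0,1)\le\kappa)\ge\tfrac12$, so $\mu\asymp\log z$. That contradicts the factor $\min\{1,\log T/\sqrt{\log\log x}\}$ you claim from ``the same computation''. There is also no two-sided window: the lower barrier in $\mathcal{G}(t)$ is asymptotically irrelevant. The interval $[-\kappa,\kappa]$ comes from the constraint on the whole path. After the Girsanov step, $\mu/\E[|F_z(\tfrac12)|^2]$ is asymptotic to $\P(\sum_{m\le j}G_m\le\tfrac12\log T\ \text{for all}\ j\le\tau)$, and the reflection principle gives $\P(\sup_{s\le N}W_s\le a)=\P(|W_N|\le a)$. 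Making this rigorous requires three things:
\begin{enumerate}
\item the discrete-versus-Brownian comparison of Lemma~\ref{lemma: rwasymp} (embedding for the lower bound, overshoot control with $\epsilon=\log a/a$ for the upper bound);
\item a check that the barrier perturbations $-(\log T)^{1/100}+h(j)$ are $o(a)$ with $a=\tfrac12\log T$, not $o(\kappa)$ as you write, since $\kappa$ may tend to $0$;
\item asymptotic exactness of the Girsanov comparison, which uses that the barriers begin only at primes $z^{e^{-\tau}}\to\infty$.
\end{enumerate}

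\textbf{The far-pair step of Stage 3 is false as stated.} For $1\le|t_1-t_2|\le 2\TT$ the small primes do not decorrelate pointwise. Already without barriers, the two-point second moment over primes below roughly $\exp((\log|t_1-t_2|)^2)$ differs from the product of one-point moments by a factor of size $\exp(\sum_p 2\cos((t_1-t_2)\log p)/p)$, which is not $1+o(1)$ (compare \eqref{eqn: p1}). Since your barriers involve all primes $p\le z^{e^{-k}}$, you cannot separate these primes off. The missing idea is as follows:
\begin{enumerate}
\item impose barriers only on primes $p>z^{e^{-\tau}}=\exp(\exp((\log T)^{1/200}))$, and factor the two-point expectation as $P_1(t,u)P_2(t,u)$;
\item prove $1+o(1)$ decorrelation only for the barriered factor $P_2$ (Lemma~\ref{lem: slice}, extending Harper's Lemma 7 to $|t-u|\le 2\TT$ via a strong prime number theorem);
\item treat $P_1$ only after integrating against $K_T(t)^2K_T(u)^2$, via the barrier-free Lemma~\ref{lem: concentration}. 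Its covariance bound $T(\log w)^4$ exploits $\int (r/s)^{it}K_T(t)^2\,dt\ll 1/|\log(r/s)|$, and is acceptable exactly because $\log w\ll\sqrt{T}$.
\end{enumerate}

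This has a price in the near-diagonal range. There the fourth moment $\exp(4(\log T)^{1/200})$ of the unbarriered small primes must be beaten by the saving built into $\mathcal{H}(t)$. Moreover, a saving depending only on $T$, fed into the cap-and-bound argument you sketch, does not suffice. Integrating over $(\log z)^{-1/4}\le|h|\le 1$ runs over $\asymp\log\log z$ scales and loses powers of $\log\log z$ relative to $(\mu T)^2$. No function of $T$ can absorb this when $T$ grows slowly. This is why \eqref{eqn: AJH h(t)} contains the scale-dependent factor $(\log\log z-j)^{-5}$, which makes $\int dh/(|h|(\log(1/|h|))^{8})$ converge.

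A minor point: a relative bound $O((\log T)^{-1/5})$ in Stage 1 yields via Markov only $(\log T)^{-1/10}$ for both the error and the exceptional probability. Proposition~\ref{prop: Hfail}, together with the lower bound for $\mu$, gives about $(\log T)^{1/25-1/2}$, which is enough.
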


In view of Proposition \ref{prop: asmpy}, to prove Theorem \ref{thm: concentration} it will suffice to show that with probability $1- O((\log T)^{-\frac 15})$ we have (with $\TT=T(\log T)^{100}$ as before)
$$
\frac{1}{4\pi T} \int_{-\TT}^{\TT } |F_z(\tfrac{1}{2}+it)|^{2} K_T(t)^{2} dt = \mu \Big(1 + O\Big(\frac{1}{(\log T)^{\frac 15}}\Big)\Big),
$$
where $\mu$ has the properties claimed in the theorem.  The integral only depends on $x$ and $y$ via $z = x^{\frac{1}{\log \log \log x}}$ and $T=2/\log(1+y/x) = 2x/y + O(1)$. Likewise, $\mu$ will only depend on $x$, $y$ via $z$ and $T$, see Proposition \ref{prop: mu} below.

\subsection*{Overall strategy for establishing Theorem \ref{thm: concentration}}  Our proof will proceed in three steps: 
\begin{enumerate}
    \item We set up an initial barrier event $\mathcal{G}(t)$ holding with high probability for all $|t| \leq \TT$, such that with high probability  
$$
\int_{-\TT}^{\TT} |F_z(\tfrac{1}{2}+it)|^{2} K_T(t)^{2} dt = \int_{-\TT}^{\TT} |F_z(\tfrac{1}{2}+it)|^{2} K_T(t)^{2} \mathbbm{1}_{\mathcal{G}(t)} dt. 
$$ 
\item With high probability, we replace $\mathcal{G}(t)$ by a stronger (more restrictive) barrier event $\mathcal{H}(t)$ with little loss.  That is, with high probability one has 
$$
\int_{-\TT}^{\TT} |F_z(\tfrac{1}{2}+it)|^{2} K_T(t)^{2} \mathbbm{1}_{\mathcal{G}(t)} dt \approx \int_{-\TT}^{\TT} |F_z(\tfrac{1}{2}+it)|^{2} K_T(t)^{2} \mathbbm{1}_{\mathcal{H}(t)} dt.
$$
Unlike the event $\mathcal{G}(t)$ which holds with high probability for {\em all} $|t|\le T(\log T)^{100}$, the event $\mathcal{H}(t)$ is not required to hold for {\em all} $t$ with high probability.  Instead, the total contribution from points $t$ where it fails (but $\mathcal{G}(t)$ holds) is adequately small with high probability.
\item We establish the desired concentration estimate (in fact a much stronger one) for $\int_{-\TT}^{\TT } |F_z(\frac{1}{2}+it) |^{2} K_T(t)^{2} \mathbbm{1}_{\mathcal{H}(t)} dt$, via a variance calculation heavily relying on the presence of $\mathbbm{1}_{\mathcal{H}(t)}$. 
\end{enumerate}

\vspace{12pt}
Before carrying out this strategy, we first prove a much simpler concentration result (a variance type calculation without any barriers) that performs well when only rather small primes (compared with $T$) are involved. 
\begin{lemma}\label{lem: concentration}
    Let $f$ be a Steinhaus random multiplicative function. Let $T$ be large and $K_T(t)$ be defined as in \eqref{eqn: F}. Then for all $H\ge T$ and $w \ge 2$,  
    \[
    \E\Big[\Big|\int_{-H}^{H} |F_w(\tfrac{1}{2}+it)|^{2} K_T(t)^{2}dt - \sum_{\substack{n\ge 1\\  p|n \implies p\le w}} \frac{1}{n} \int_{-H}^{H} K_T(t)^{2}  dt  \Big|^{2}\Big]\ll T (\log w)^{4} , 
    \]
where $F_w(s):= \prod_{p \leq w} (1 - {f(p)}{p^{-s}} )^{-1}$.
\end{lemma}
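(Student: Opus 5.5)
The plan is to expand the Euler product as a Dirichlet series over $w$-smooth numbers and use the orthogonality of Steinhaus $f$. Since $F_w(\tfrac12+it)=\sum_{p|n\Rightarrow p\le w} f(n)n^{-1/2-it}$ converges absolutely (it is a finite Euler product, and $\prod_{p\le w}(1-p^{-1/2})^{-1}<\infty$), we may expand the square and integrate term by term:
$$\int_{-H}^{H}|F_w(\tfrac12+it)|^2K_T(t)^2\,dt=\sum_{m,n}\frac{f(m)\overline{f(n)}}{\sqrt{mn}}\,\Phi\Big(\log\frac nm\Big),\qquad \Phi(u):=\int_{-H}^{H}K_T(t)^2e^{iut}\,dt .$$
Here $m,n$ run over $w$-smooth integers. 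The diagonal $m=n$ is exactly the subtracted term $\sum_n \frac1n\int_{-H}^H K_T^2$. The quantity to bound is therefore $\E|D|^2$, where $D$ is the off-diagonal sum over $m\neq n$.

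Expanding $|D|^2$, we have $\E[f(m_1)\overline{f(n_1)}\,\overline{f(m_2)}f(n_2)]=\mathbbm 1_{m_1n_2=n_1m_2}$. Hence
$$\E|D|^2=\sum_{\substack{m_1/n_1=m_2/n_2\neq 1}}\frac{\Phi(\log(n_1/m_1))\overline{\Phi(\log(n_2/m_2))}}{\sqrt{m_1n_1m_2n_2}} .$$
I would parametrise by the reduced ratio $b/a$ with $(a,b)=1$, $a\ne b$, both $w$-smooth, writing $m_j=g_ja$ and $n_j=g_jb$. The sums over $g_1,g_2$ then factor out as $(\sum_{g}1/g)^2\ll(\log w)^2$ by Mertens. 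This leaves
$$\E|D|^2\ll(\log w)^2\sum_{\substack{a\neq b,\ (a,b)=1\\ w\text{-smooth}}}\frac{|\Phi(\log(b/a))|^2}{ab}.$$

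The analytic input is a bound for $\Phi$. Without truncation, $K_T(t)^2$ is $T^2$ times $|(e^{s/T}-e^{-s/T})/s|^2$ with $s=\tfrac12+it$. This is (up to the harmless factor coming from the real part $\tfrac12$) the Fourier transform of the convolution of two indicator functions of intervals of length $2/T$ in the $v$-variable, as in the Plancherel step of Proposition~\ref{prop: asmpy}. So its full Fourier transform is $\ll T$ and is supported on $|u|\le 2/T$. The truncation error $\int_{|t|>H}K_T^2$ is $\ll T^2/H\le T$, and integrating by parts once more gives extra decay, say $\ll T^2/(H^2|u|)$ for $|u|\ge 1/H$. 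Hence
$$|\Phi(u)|\ll T\,\mathbbm 1_{|u|\le 2/T}+\min\Big(T,\frac{T^2}{H^2|u|}\Big).$$
For the first piece, $|\log(b/a)|\le 2/T$ forces $a\gg T$ and $0<|b-a|\ll a/T$. The number of admissible $b$ for given $a$ is $\ll a/T$, so that piece contributes
$$\ll T^2\sum_{a\gg T,\ w\text{-smooth}}\frac{a/T}{a^2}\ll T\log w .$$

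The main obstacle is the second (truncation) piece, because it is not supported near $u=0$. I would handle it by splitting according to whether $|\log(b/a)|\le 1$ or not. When $|\log(b/a)|>1$, bound $|\Phi|^2\ll T^4/H^4\le 1$ and use $\sum_{a,b}1/(ab)\ll(\log w)^2$; this costs $(\log w)^4\le T(\log w)^4$. When $1/H\le|\log(b/a)|\le1$, use $|b-a|\asymp a|\log(b/a)|$ and a dyadic decomposition in $|b-a|$; the $1/|u|^2$ decay makes the sum over $b$ converge up to logarithms. Altogether this should give $\E|D|^2\ll (\log w)^2\cdot T(\log w)^2$, which is the claimed bound, with room to spare in the exponent of $\log w$. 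If the integration-by-parts decay for $\Phi$ turns out to be too delicate, the fallback is to use only $|\Phi|\ll T$ together with the crude count, absorbing the loss into the generous $(\log w)^4$.
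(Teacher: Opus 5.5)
Your proof is correct, and your reduction (orthogonality, parametrising by the reduced ratio $b/a$, pulling out $(\sum_g 1/g)^2\ll(\log w)^2$) is the same as the paper's. The difference is in how you bound and sum $\Phi(u)=\int_{-H}^{H}K_T(t)^2e^{iut}\,dt$.

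The paper integrates by parts once over $[-H,H]$, using that the total variation of $K_T^2$ there is $O(1)$, to get the single bound $\Phi(u)\ll\min(T,1/|u|)$. It then uses $\min(T,1/|u|)^2\le T/|u|$, which reduces everything to $\sum_{r}\sum_{s<r}\frac{1}{rs\log(r/s)}\ll\sum_{r}\frac{\log r}{r}\ll(\log w)^2$ over $w$-smooth $r$, with no case analysis.

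You split according to the size of $|\log(b/a)|$. This is more bookkeeping, but it exploits the $1/u^2$ decay where the paper's $T/|u|$ is wasteful, and it gives the slightly stronger bound $T(\log w)^3+(\log w)^4$. The exact compact support of the untruncated transform nicely explains where the main term lives, but it is not needed. The paper's bound $\Phi(u)\ll\min(T,1/|u|)$ already feeds your split. The ranges $|u|\le 4/T$, $4/T<|u|\le 1$ and $|u|>1$ contribute $\ll T\log w$, $\ll T\log w$ and $\ll(\log w)^2$ respectively to the sum over $a,b$.

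\textbf{Two corrections.}

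First, one integration by parts on the tail $\int_{|t|>H}$ does not give $T^2/(H^2|u|)$. Differentiating $\sin^2(t/T)$ only gains a factor $1/T$, so you get $\ll T/(H|u|)$. The stronger bound holds for $|u|\ge 4/T$ only after writing $\sin^2(t/T)$ in terms of $e^{\pm 2it/T}$. Just beyond $|u|=2/T$ the tail is still of size $\asymp T^2/H$, which exceeds your $T^2/(H^2|u|)\asymp T^3/H^2$ when $H\gg T$. So the indicator in your bound for $\Phi$ should be $\mathbbm{1}_{|u|\le 4/T}$. This is harmless, since your summation only needs $|\Phi(u)|\ll 1/|u|$ for $|u|>4/T$.

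Second, your fallback would not work. With only $|\Phi|\ll T$, the sum over $a,b$ is $\ll T^2(\log w)^2$. That loses a full factor of $T$, which cannot be absorbed into powers of $\log w$ because $T$ and $w$ are independent. Some decay of $\Phi$ in $u$ is indispensable.
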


Note that 
$$
\sum_{\substack{n\ge 1\\  p|n \implies p\le w}} \frac{1}{n} \int_{-H}^{H} K_T(t)^{2}  dt  = \prod_{p \leq w} \Big(1 - \frac{1}{p}\Big)^{-1} \int_{-H}^{H} K_T(t)^{2}  dt \asymp T\log w,
$$
 so the lemma provides genuine concentration if the right hand side is smaller than $(T\log w)^2$; that is,  if $\log w$ is smaller than $\sqrt{T}$.  This lemma will be useful in the proof of Proposition~\ref{prop: concentration}, for an appropriate  $w$ chosen in terms of $T$, to handle the contribution from the small primes while a different argument takes care of larger primes. 

\begin{proof}[Proof of Lemma~\ref{lem: concentration}]  Let ${\mathcal S}(w)$ denote the set of natural numbers all of whose prime factors are below $w$.   The left side of the lemma is 
$$ 
\E\Big[\Big|\int_{-H}^{H} \sum_{\substack{n,m \in {\mathcal S}(w) \\ n \neq m}} \frac{f(n) \overline{f(m)}}{n^{\frac 12+it} m^{\frac 12 - it}} K_T(t)^{2}dt \Big|^{2}\Big].
 $$
Expanding the square and using the orthogonality of the $f(n)$, this equals 
\begin{eqnarray*}
& &\int_{-H}^{H}\int_{-H}^{H} \sum_{\substack{n_1, n_2, m_1, m_2 \in {\mathcal S}(w) \\ n_1 n_2 = m_1 m_2 \\ n_1 \neq m_1}} \frac{1}{n_1^{\frac 12+it_1} m_1^{\frac 12 - it_1}} \frac{1}{n_2^{\frac 12+it_2} m_2^{\frac 12 - it_2}} K_T(t_1)^{2} K_T(t_2)^{2} dt_1 dt_2 \\
& = &  \sum_{\substack{n_1, n_2, m_1, m_2 \in {\mathcal S}(w) \\ n_1 n_2 = m_1 m_2 \\ n_1 \neq m_1}}  \frac{1}{m_1m_2} \int_{-H}^{H}\int_{-H}^{H} \Big(\frac{n_1}{m_1}\Big)^{i(t_2-t_1)} K_T(t_1)^{2} K_T(t_2)^{2} dt_1 dt_2 . 
\end{eqnarray*}

The solutions to $n_1n_2= m_1m_2$ may be parametrized by writing $g=(n_1,m_1)$ and setting $n_1= gr$, $m_1= gs$ where $r$ and $s$ are coprime, with $rs>1$.   It then follows that $n_2= sh$ and $m_2= rh$ for some integer $h$.  Thus our desired sum equals 
\begin{equation} 
\label{3.1} 
\sum_{g, h \in{\mathcal S}(w)} \sum_{\substack{ r, s\in {\mathcal S}(w) \\ (r,s)=1 \\ rs >1}} \frac{1}{ghrs} \Big|\int_{-H}^{H} \Big(\frac rs\Big)^{it} K_T(t)^2 dt \Big|^2 \ll 
(\log w)^2  \sum_{\substack{ r, s\in {\mathcal S}(w) \\ (r,s)=1 \\ rs >1}} \frac{1}{rs} \Big|\int_{-H}^{H} \Big(\frac rs\Big)^{it} K_T(t)^2 dt \Big|^2. 
\end{equation}

We now claim that for any $\xi \neq 0$ 
 \begin{equation}\label{eqn: kernel}
        \begin{split}
    \int_{-H}^{H} e^{it\xi}  K_T(t)^{2} dt 
     \ll \min \Big( T, \frac{1}{|\xi|} \Big) . 
        \end{split}
    \end{equation}
 Recall the definition of $K_T(t)$ given in \eqref{eqn: F},  from which it follows that $K_T(t)^2 \ll \min(1, T^2/(1+t^2))$.  This estimate readily implies the bound of $T$ given in \eqref{eqn: kernel}.    To obtain the other bound, we integrate by parts to see that the integral is 
 $$
 \frac{e^{it\xi}}{i\xi} K_T(t)^2 \Big|_{-H}^{H} - \frac{1}{i\xi} \int_{-H}^{H} e^{i\xi t}\frac{d}{dt} K_T(t)^2  dt \ll 
 \frac{1}{|\xi|} + \frac{1}{|\xi|} \int_{-H}^{H} \Big| \frac{d}{dt} K_T(t)^2 \Big| dt. 
 $$ 
A small calculation using the definition of $K_T(t)$ shows that the derivative of $K_T(t)^2$ is bounded in magnitude by $\ll (1+|t|)/T^2$ for $|t| \le T$ and by $\ll T/t^2$ for $|t| >T$.  The second bound claimed in \eqref{eqn: kernel} follows.  
    
 Using the bound \eqref{eqn: kernel} in \eqref{3.1} we see that our desired  variance is 
 $$ 
 \ll   (\log w)^2 \sum_{\substack{ r, s\in {\mathcal S}(w) \\ (r,s)=1 \\ rs >1}} \frac{1}{rs} \frac{T}{|\log (r/s)|} \ll T (\log w)^2 \sum_{r \in {\mathcal S}(w)} \sum_{s<r} \frac{1}{rs} \frac{1}{\log (r/s)}, 
 $$ 
 where in the last step we assumed by symmetry that $r$ is larger than $s$, and dropped the coprimality condition as well as the requirement that $s$ is $w$-smooth.    Using $\log (1/t) \ge (1-t)$ for $\tfrac 12 <t <1$, we find  
 $$ 
 \sum_{s<r} \frac{1}{s\log (r/s)} \ll \sum_{s\le r/2} \frac 1s + \sum_{r/2 < s <r} \frac{1}{(r-s)} \ll \log r.  
 $$
 Thus our variance is 
 $$ 
 \ll T(\log w)^2 \sum_{r\in {\mathcal S}(w)} \frac{\log r}{r} = T(\log w)^2 \sum_{r\in {\mathcal S}(w)} \frac{1}{r} \sum_{\ell |r} \Lambda(\ell) 
 = T (\log w)^2 \sum_{\ell \in {\mathcal S}(w)} \frac{\Lambda(\ell)}{\ell} \prod_{p\le w} \Big(1-\frac 1p\Big)^{-1}, 
 $$ 
which is $\ll T(\log w)^4$,  proving the proposition.  
 \end{proof}

\vspace{12pt}

We now begin work on executing the proof strategy described earlier.   Let $\tau$ denote the smallest natural number such that 
$$ 
z^{e^{-\tau}} \le \exp( \exp( (\log T)^{1/200})), 
$$ 
so that (by our choice of $z$ and as $T \ll \sqrt{\log x}$) 
$$
\tau = \log \log z - (\log T)^{\frac{1}{200}} +O(1) \sim \log \log z \sim \log \log x. 
$$ 
 Let $B$ be a fixed large constant, chosen sufficiently large to ensure that the probabilistic results invoked below hold.
 
 For each integer $0\le j\le \tau$, it will be convenient to define the partial random Euler product 
 \begin{equation} 
 \label{3.3} 
 {\mathcal F}_j(s) = \prod_{z^{e^{-\tau}} < p \le z^{e^{-j}}} \Big( 1- \frac{f(p)}{p^s}\Big)^{-1}; 
 \end{equation} 
 note that the final Euler product ${\mathcal F}_\tau$ is empty (and thus equals $1$). 
 Note that for any $t\in \R$ (and large $T$) 
 \begin{equation} 
 \label{3.4} 
 \E[ |{\mathcal F}_j(1/2+it)|^2 ] = \prod_{z^{e^{-\tau}} < p \le z^{e^{-j}}} \Big( 1- \frac{1}{p}\Big)^{-1} \sim e^{\tau-j}.
 \end{equation} 

For each $t \in \R$, let $\mathcal{G}(t)$ denote the event that for all $0\le j\le \tau$ 
\begin{equation}\label{eqn: g(t)}
     (\sqrt{T} e^{\tau-j})^{-B}  \le |{\mathcal F}_j(\tfrac 1 2 +it)|  \le \sqrt{T} e^{\tau-j}\exp((\log T)^{\frac 1{100}}) (\log \log z - j).   
\end{equation}
The stronger barrier event $\mathcal{H}(t)$ demands that the above holds and in addition (for all $0\le j\le \tau$), 
\begin{equation}\label{eqn: AJH h(t)}
|{\mathcal F}_j(\tfrac 12+it)| \le \sqrt{T} e^{\tau-j}  \frac{\exp(-(\log T)^{\frac 1{100}})}{ (\log \log z - j)^{5}}.  
\end{equation}
Note that when $j=\tau$ the Euler product ${\mathcal F}_j$ is empty, and the conditions \eqref{eqn: g(t)} and \eqref{eqn: AJH h(t)} hold automatically. 

The lower bound in the definition of $\mathcal{G}(t)$ is purely technical, the upper bound constraint is the important part. Then $\mathcal{H}(t)$ will give us a small, but crucial, extra saving in terms of both $T$ and $(\log\log z - j)$ (which, as the reader may check, cannot be incorporated directly into our proof that $\mathcal{G}(t)$ holds with high probability).

More specifically, the factor $(\log \log z - j)^{-5}$ ultimately leads to the uniform boundedness of an integral appearing in the proof of Proposition~\ref{prop: concentration}, below. The factor $\exp(-(\log T)^{\frac 1{100}})$ is needed both to overcome losses because our barrier conditions only start at $z^{e^{-\tau}}$, and in producing the final saving in Proposition~\ref{prop: concentration} (we note in passing that we could afford to save much less there and would still obtain Theorem \ref{thm: concentration}, the bounds flowing from Proposition~\ref{prop: Hfail} below are anyway weaker). There is quite a lot of flexibility in the choice of the factor $\exp(-(\log T)^{\frac 1{100}})$ and of the start point $z^{e^{-\tau}}$, but this must grow with $T$ at a certain rate so that the probabilistic results we shall invoke are valid (this ultimately corresponds to having good enough error terms for prime number sums of length $z^{e^{-\tau}}$).

\vspace{12pt}

We now establish our first step, showing that the barrier event ${\mathcal G}(t)$ holds for all $|t|\le \TT$ with high probability.   The proof combines second moment calculations together with the union bound and a discretization modeled after Harper~\cite{HarperLow} and (especially) Soundararajan and Zaman~\cite{SZ}.

\begin{proposition} \label{prop3.3}  Recalling that $\TT = T(\log T)^{100}$, we have  
\begin{equation}\label{eqn: exceptprob}
\P\Big( \mathcal{G}(t)~\text{holds for all $|t|\le \TT$} \Big)  \ge 1 -\exp (-(\log T)^{\frac 1{100}}). 
\end{equation}
\end{proposition}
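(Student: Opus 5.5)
We reduce the statement to a union bound over a discrete set of points $t$ and a discrete set of scales $j$, handling the upper and lower barriers separately. Writing $\mathcal{G}(t)$ as the intersection over $0\le j<\tau$ of the events $E_j^{+}(t)$ (the upper bound in \eqref{eqn: g(t)} holds) and $E_j^{-}(t)$ (the lower bound holds), it suffices to show
\[
\sum_{0\le j<\tau}\P\Big(\exists\,|t|\le \TT:\ E_j^{+}(t)\ \text{fails}\Big)+\sum_{0\le j<\tau}\P\Big(\exists\,|t|\le \TT:\ E_j^{-}(t)\ \text{fails}\Big)\le \exp(-(\log T)^{\frac1{100}}).
\]

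\textbf{Discretization.} For fixed $j$, the product $\mathcal{F}_j(\tfrac12+it)$ involves only primes $p\le z^{e^{-j}}$, so it varies on the scale $|\Delta t|\approx 1/\log(z^{e^{-j}})=e^{j}/\log z$. Following Soundararajan and Zaman~\cite{SZ}, we control $\sup_{t\in[t_0,t_0+e^{j}/\log z]}|\mathcal{F}_j(\tfrac12+it)|$ by a discrete mean. Concretely, $\log \mathcal{F}_j(\tfrac12+it)$ is essentially the sum $\sum_p f(p)p^{-1/2-it}$ plus a convergent part. Bounding $\sup$ over a short interval by a $2k$-th moment average (a Sobolev/Gallagher-type inequality, or writing $\mathcal{F}_j(\tfrac12+it)$ as a mean-value integral over a slightly longer interval), the supremum over an interval of length $e^{j}/\log z$ is reduced to a few points. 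The net effect is a polynomial loss: the number of effective points is $\ll \TT\log z\, e^{-j}$.

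\textbf{Upper bound, via second moments.} At a fixed point $t$, we use \eqref{3.4} and translation invariance:
\[
\P\big(|\mathcal{F}_j|>\sqrt T e^{\tau-j}e^{(\log T)^{1/100}}(\log\log z-j)\big)\le \frac{e^{\tau-j}}{T e^{2(\tau-j)}e^{2(\log T)^{1/100}}(\log\log z-j)^2}.
\]
Multiplying by the $\approx \TT\log z\,e^{-j}$ points and recalling $\log z\approx e^{\tau}\exp((\log T)^{1/200})$, we get a bound of roughly
\[
(\log T)^{100}\, e^{(\log T)^{1/200}}\, e^{-2(\log T)^{1/100}}\,(\log\log z-j)^{-2}.
\]
Summing over $j$ converges because of the factor $(\log\log z-j)^{-2}$, and since $\log\log z-j\ge (\log T)^{1/200}$ the sum is dominated by the term with the smallest value of $\log\log z-j$. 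The danger is the factor $(\log T)^{100}$ from $\TT$ against a saving of only $e^{-2(\log T)^{1/100}}$, which does not beat it.

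\textbf{The main obstacle.} Plain Chebyshev is therefore too weak to absorb $\TT/T$ and the $e^{(\log T)^{1/200}}$ coming from the start point. Our first attempt is to replace the second moment by a $2q$-th moment with $q$ slightly above $1$, namely $q=1+(\log T)^{-1/2}$ or similar. Then $\E|\mathcal{F}_j|^{2q}\approx e^{q^2(\tau-j)}$, and the threshold raised to the power $2q$ gains $T^{q}$. This yields an extra factor $T^{-(q-1)}e^{(q^2-q)(\tau-j)}\cdot e^{-2q(\log T)^{1/100}}$. We must check that this improved saving is enough, keeping $(q-1)^2\tau$ small. If it is not, the fallback is a Gaussian (moment generating function) tail for $\log|\mathcal{F}_j|$ over the range $[z^{e^{-\tau}},z^{e^{-j}}]$, combined with the chaining argument of Harper~\cite{HarperLow}. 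In that approach the starting scale $z^{e^{-\tau}}$ ensures the prime-number-theorem errors are acceptable.

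\textbf{Lower bound.} The lower barrier $(\sqrt T e^{\tau-j})^{-B}$ is very weak. Using $\E|\mathcal{F}_j|^{-2}\sim e^{\tau-j}$, which holds by the same product computation since $|1-f(p)p^{-s}|^2$ has mean $1+1/p$, Markov's inequality gives a per-point probability $\le (\sqrt Te^{\tau-j})^{-2B}e^{\tau-j}$. With $B$ large this absorbs all discretization losses, given a derivative bound on $\log\mathcal{F}_j$ to pass from the grid to the continuum.
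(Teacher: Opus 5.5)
Your skeleton is exactly the paper's proof. It uses a mesh of spacing $e^{j}/\log z$ with $\ll \TT e^{-j}\log z$ points, Markov's inequality with the second moment \eqref{3.4} at mesh points, a derivative estimate to pass from the mesh to all $|t|\le \TT$, the mean $\E[|\mathcal{F}_j|^{-2}]\sim e^{\tau-j}$ for the lower barrier, and a union bound over $j$. Your resulting bound is also the paper's bound.

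\textbf{The gap.} You declare this bound insufficient and then never complete an argument. The claim that $e^{-2(\log T)^{1/100}}$ ``does not beat'' $(\log T)^{100}e^{(\log T)^{1/200}}$ is false. Since $100\log\log T+(\log T)^{1/200}=o((\log T)^{1/100})$, we have
\[
(\log T)^{100}e^{(\log T)^{1/200}}e^{-2(\log T)^{1/100}}=e^{-(\log T)^{1/100}}\exp\Big(-(\log T)^{\frac{1}{100}}+(\log T)^{\frac{1}{200}}+100\log\log T\Big),
\]
and the last factor tends to $0$. Also $\log\log z-j\ge(\log T)^{1/200}+O(1)$ for $j<\tau$, so $\sum_{0\le j<\tau}(\log\log z-j)^{-2}\ll(\log T)^{-1/200}$. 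Together these show the total failure probability is at most $\exp(-(\log T)^{1/100})$ for large $T$, with no implied constant. This is exactly the step ``upon recalling the definition of $\tau$'' in the paper. As written, you replace a working argument by an unverified ``first attempt'' and an unspecified ``fallback'', so the proposal does not actually prove the statement.

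\textbf{The $q>1$ detour would not help anyway.} Relative to $q=1$, the $2q$-th moment bound is multiplied by $T^{-(q-1)}e^{(q-1)^2(\tau-j)}$ (times a further small barrier factor). It is not multiplied by $T^{-(q-1)}e^{(q^2-q)(\tau-j)}$ as you wrote; with your exponent the loss would swamp the gain whenever $\tau-j\gg\log T$. Even with the correct exponent, the optimal $q$ saves only about $\exp(-(\log T)^2/(4\tau))$, which is negligible when $\log T\le\sqrt{\tau}$.

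\textbf{Discretization needs no extra machinery.} No $2k$-th moment or Gallagher-type inequality is required. Bound $|\mathcal{F}_j(\tfrac12+it)-\mathcal{F}_j(\tfrac12+i\widehat{t})|$ by $\int_{|h|\le e^{j}/\log z}|\mathcal{F}_j'(\tfrac12+i\widehat{t}+ih)|\,dh$. Using $\log n\le n^{\alpha}/(e\alpha)$ with $\alpha=e^{j}/\log z$ gives $\E[|\mathcal{F}_j'|^2]\ll(e^{-j}\log z)^2e^{\tau-j}$. By Cauchy--Schwarz this error term then has second moment $\ll e^{\tau-j}$, the same as $\mathcal{F}_j$ itself. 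The same computation for $(\mathcal{F}_j^{-1})'$ handles the lower barrier. All four failure modes therefore obey the same Chebyshev bound, and the plain second-moment argument closes.
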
 
\begin{proof}   Given $0\le j\le \tau -1$  we shall show that the probability that \eqref{eqn: g(t)} fails for some $|t| \le \TT$ is  
$$ 
\ll \exp(-(\log T)^{\frac 1{100}}) (\log \log z -j)^{-2}.  
$$
Summing this over all the possibilities for $j$ yields the proposition.  

Consider a mesh of points ${\mathcal T}_j = \{ {\widehat t} = e^{j} n/\log z :\ \ n\in {\mathbb Z}, \ |{\widehat t}| \le \TT\}$.  The mesh ${\mathcal T}_j$ contains $\ll \TT e^{-j} \log z$ points, and for each $t$ with $|t|\le \TT$ we may find ${\widehat t} \in {\mathcal T}_j$ with $|t-{\widehat t}| \le e^{j}/\log z$. It will turn out that this places ${\widehat t}$ sufficiently close to $t$ (relative to the length of the Euler product ${\mathcal F}_{j}$) that the behaviour of ${\mathcal F}_j(\tfrac 12 +it)$ is essentially controlled by that of ${\mathcal F}_j(\tfrac 12 +i{\widehat t})$.

Indeed, if \eqref{eqn: g(t)} fails at $t$, then with ${\widehat t}$ denoting the nearest point to $t$ in ${\mathcal T}_j$ we must have one of the following four possibilities: 
\begin{equation} 
\label{3.8} 
|{\mathcal F}_j(\tfrac 12 +i{\widehat t})| \ge \tfrac 12 \sqrt{T} e^{\tau-j} \exp((\log T)^{\frac 1{100}}) (\log \log z - j), 
\end{equation} 
or 
\begin{equation} 
\label{3.9} |{\mathcal F}_j(\tfrac 12 + i{\widehat t})|^{-1} \ge \tfrac 12 (\sqrt{T} e^{\tau-j})^B, 
\end{equation} 
or 
\begin{equation} 
\label{3.10} 
\int_{-e^{j}/\log z}^{e^{j}/\log z} |{\mathcal F}_j^{\prime}(\tfrac 12+i{\widehat t} +ih)| dh 
\ge |{\mathcal F}_j(\tfrac 12+it)-{\mathcal F}_j(\tfrac 12+i{\widehat t})| \ge \tfrac 12 \sqrt{T} e^{\tau-j} \exp((\log T)^{\frac 1{100}}) (\log \log z - j), 
\end{equation}
or 
\begin{equation} 
\label{3.11} 
\int_{-e^{j}/\log z}^{e^{j}/\log z} |({\mathcal F}_j(\tfrac 12+i{\widehat t} +ih)^{-1})^{\prime}| dh 
\ge  |{\mathcal F}_j(\tfrac 12+it)^{-1} - {\mathcal F}_j(\tfrac 12 +i{\widehat t})^{-1} |  > \tfrac 12  (\sqrt{T}e^{\tau-j})^{B}. 
\end{equation} 

Given ${\widehat t}$, using Markov's inequality with \eqref{3.4}, we see that the probability that \eqref{3.8} holds is
\begin{align} 
\label{3.12} 
&\ll e^{\tau-j} (\tfrac 12 \sqrt{T} e^{\tau-j} \exp((\log T)^{\frac 1{100}}) (\log \log z - j))^{-2} \nonumber\\
& \ll T^{-1} e^{j-\tau} \exp(-2(\log T)^{\frac 1{100}}) (\log \log z - j)^{-2}.
\end{align} 
Since 
$$
\E[|{\mathcal F}_j(\tfrac 12+i {\widehat t})|^{-2}] = \prod_{z^{e^{-\tau}} < p \le z^{e^{-j}}} \Big(1 +\frac 1p \Big) \sim e^{\tau -j}, 
$$ 
an even stronger bound applies for the probability that \eqref{3.9} holds.   Next, note that 
$$ 
\E[ |{\mathcal F}_j^{\prime}(\tfrac 12 +i{\widehat t} +ih)|^2 ] = \E\Big[ \Big| \sum_{\substack{ n=1 \\ p|n \implies z^{e^{-\tau}}<p\le z^{e^{-j}}}} \frac{f(n) \log n}{n^{\frac 12+i {\widehat t}+ih}}\Big|^2\Big] = \sum_{\substack{ n=1 \\ p|n \implies z^{e^{-\tau}}<p\le z^{e^{-j}}}} \frac{(\log n)^2}{n},  
$$ 
and using $(\log n)/n^{\alpha} \le 1/(e\alpha)$ for all $\alpha>0$ and $n\ge 1$ we may bound this by 
\begin{align*} 
&\ll \Big( \frac{\log z}{e^j} \Big)^2 \sum_{\substack{ n=1 \\ p|n \implies z^{e^{-\tau}}<p\le z^{e^{-j}}}} \frac{1}{n^{1-2 e^{j}/\log z}} 
\\
&\le \Big( \frac{\log z}{e^j} \Big)^2  \prod_{z^{e^{-\tau}}<p\le z^{e^{-j}}} \Big(1- \frac 1{p^{1-2e^{j}/\log z}}\Big)^{-1} \ll \Big( \frac{\log z}{e^j} \Big)^2 e^{\tau-j}. 
\end{align*} 
Using these estimates and Cauchy--Schwarz we conclude that 
$$ 
\E \Big[ \Big| \int_{-e^{j}/\log z}^{e^{j}/\log z} |{\mathcal F}_j^{\prime}(\tfrac 12+i{\widehat t} +ih)| dh \Big|^2 \Big] 
\ll \frac{e^{j}}{\log z} \int_{-e^{j}/\log z}^{e^{j}/\log z}  \E[ |{\mathcal F}_j^{\prime}(\tfrac 12 +i{\widehat t} +ih)|^2 ]  dh \ll e^{\tau- j}. 
$$ 
Therefore by Markov's inequality, the probability that \eqref{3.10} holds is also bounded by the quantity in \eqref{3.12}.   An entirely analogous argument shows that the same estimate also holds for the probability with which \eqref{3.11} holds.   

Since there are $\ll {\widetilde T}e^{-j} \log z$ possible points ${\widehat t}$ in ${\mathcal T}_j$, we conclude that the probability that one of the four possibilities in \eqref{3.8}, 
\eqref{3.9},  \eqref{3.10}, or \eqref{3.11} holds for some ${\widehat t} \in {\mathcal T}_j$ is 
\begin{align*}
&\ll ({\widetilde T} e^{-j} \log z) T^{-1} e^{j-\tau} \exp(-2(\log T)^{\frac 1{100}}) (\log \log z - j)^{-2}\\
& \ll (\log T)^{100} (e^{-\tau} \log z) \exp(-2(\log T)^{\frac 1{100}}) (\log \log z - j)^{-2} \\
&\ll \exp(-(\log T)^{\frac 1{100}}) (\log \log z - j)^{-2}, 
\end{align*}
upon recalling the definition of $\tau$.   This bounds the probability that \eqref{eqn: g(t)} fails for this particular $j$ and some $|t| \le \TT$, and completes our proof. 
 \end{proof} 


Our next proposition will compute the expectation of $|F_z(\frac{1}{2} + it)|^{2}$ restricted to the barrier event $\mathcal{H}(t)$. Here we see the appearance and properties of the quantity $\mu$, which is related to the value to which our variance will concentrate. In order to obtain sufficiently strong information about $\mu$ to deduce \eqref{eqn: hVexplicit}, we will need the following Gaussian random walk lemma, which (although not hard to prove) is more precise than usually required in random multiplicative function problems.

\begin{lemma}\label{lemma: rwasymp}
Let $G_1 , ..., G_n$ be independent Gaussian random variables, each having mean zero and variance $0 < \E [G_m^2] \leq 20$ (say). Set $N:= \sum_{m=1}^{n} \E [G_m^2]$, and assume this is large. Then, for $a$ larger than a suitable absolute constant, 
$$ 
\P\Big(\sum_{m=1}^{j} G_m \leq a  \text{  for all } 1 \leq j \leq n\Big) = \Big(1 + O\Big( \frac{\log a}{a} \Big) \Big) \frac{1}{\sqrt{2\pi}} \int_{-\frac{a}{\sqrt{N}}}^{\frac{a}{\sqrt{N}}} e^{-u^{2}/2} du . 
$$
\end{lemma}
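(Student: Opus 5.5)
The idea is to embed the discrete partial sums into a standard Brownian motion and then use the reflection principle. Set $N_j := \sum_{m\le j}\E[G_m^2]$ and let $S_j := \sum_{m \le j} G_m$. Realise $S_j = W(N_j)$ for a standard Brownian motion $W$; this is legitimate since the increments are independent centred Gaussians with the right variances. The event in question is $\{W(N_j)\le a \ \forall j\le n\}$. It contains the continuous-time event $\{\sup_{s\le N} W(s)\le a\}$. By the reflection principle, the probability of that continuous event is exactly
$$\P(|W(N)|\le a)=\frac{1}{\sqrt{2\pi}}\int_{-a/\sqrt N}^{a/\sqrt N} e^{-u^2/2}\,du.$$
This gives the lower bound with no error at all. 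It remains to prove the matching upper bound up to the factor $1+O(\log a/a)$.

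For the upper bound I compare with a slightly higher barrier. Put $a' := a + C\sqrt{\log a}$. Then
$$\P(S_j\le a\ \forall j) \le \P\Big(\sup_{s\le N}W(s)\le a'\Big) + \P\Big(S_j\le a\ \forall j,\ \sup_{s\le N} W(s) > a'\Big).$$
The first term is the reflection-principle integral with $a'$ in place of $a$. It exceeds the target by
$$\ll \frac{C\sqrt{\log a}}{\sqrt N}\, e^{-a^2/2N}.$$
Compare this with the target integral, which is $\asymp \min(1,a/\sqrt N)$. When $a \le \sqrt N$ the relative error is $\ll \sqrt{\log a}/a$. When $a>\sqrt N$ the Gaussian factor $e^{-a^2/2N}$ makes it even smaller. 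Either way this is acceptable.

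The second term is the main obstacle. It asks that the Brownian path overshoots $a'$ somewhere inside a gap $[N_{j-1},N_j]$, which has length at most $20$, while staying below $a$ at every grid point. I would bound it by a union over $j$. For each $j$, I condition on $W(N_{j-1})$ and $W(N_j)$, both of which are at most $a$. Given these endpoints, the path on the gap is a Brownian bridge of length at most $20$. The probability that such a bridge exceeds $a'$ is $\ll e^{-2(a'-W(N_{j-1}))(a'-W(N_j))/20}$.

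The crude union bound over $n$ gaps would lose a factor $n\asymp N$, which is too much. Instead I weight each gap by the probability that the path is near the barrier there. By the ballot theorem, the probability that the walk has stayed below $a$ up to time $N_{j-1}$ and sits within $O(1)$ of $a$ at that time is $\ll a/N_{j-1}^{3/2}$. Set $u := a - W(N_{j-1})$. The bridge-overshoot factor is then $\ll e^{-c(\sqrt{\log a}+u)^2}$. Summing over $j$ and over integer values of $u$ gives a total of
$$\ll \sum_j \frac{a\,(1+u)}{N_{j-1}^{3/2}}\, e^{-c(\sqrt{\log a}+u)^2},$$
roughly. With $C$ large this is $\ll a^{-1}$ times the target probability. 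To see this, split $\sum_j a/N_{j-1}^{3/2}$ into the ranges $N_{j-1}\le a^2$ and $N_{j-1}\ge a^2$, and compare with the target $\asymp\min(1,a/\sqrt N)$; the variances are bounded, so the sum over $j$ behaves like an integral.

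In practice I expect the bookkeeping of this ballot-type estimate to be the delicate step. If the sharp estimate proves awkward, a cleaner alternative is the following. Run the upper-barrier argument only up to the first time $\sigma$ at which the discrete walk lies within $\sqrt{\log a}$ of $a$. Then use the strong Markov property, together with the fact that from a point at distance $\le \sqrt{\log a}$ below the barrier the remaining survival probability is small, namely $O(\sqrt{\log a}/a)$ relative to the target. Either route yields the factor $1+O(\log a/a)$, which is weaker than what the argument actually gives.
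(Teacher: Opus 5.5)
Your lower bound, and your treatment of the first upper-bound term via a continuous barrier at a raised level $a'$, are fine and agree with the paper, which takes $a'=(1+\epsilon)a$ with $\epsilon=(\log a)/a$.

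The gap is in the overshoot term $\P(S_j\le a\ \forall j,\ \sup_{s\le N}W(s)>a')$. Your weighted union bound records only that the walk is near $a$ at time $N_{j-1}$ and overshoots in the next gap. It discards the requirement that the walk then stays below $a$ at every grid point up to time $N$. Without that requirement, your displayed sum is, apart from the overshoot factor, just the first-passage density of level $a$ integrated over $[0,N]$. So it is of order $a^{-cC^2}$ once $N\gg a^2$, while the target is $\asymp a/\sqrt N$. Hence the claim that it is $\ll a^{-1}$ times the target fails as soon as $\sqrt N\gg a^{cC^2}$.

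This is fatal, because the lemma must hold uniformly in $N$, and its content lies exactly in the regime $a\ll\sqrt N$. (In the application, $a=\frac12\log T$ and $N\approx\frac12\log\log z$.) Letting $C$ grow with $N$ does not help, since it ruins the first-term error $C\sqrt{\log a}/a$.

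What is missing is a second Markov/ballot step after the gap. Restarted from $W(N_j)=a-u'$, the walk must survive to time $N$, at cost $\ll\min\{1,(1+u')/\sqrt{N-N_j}\}$. One must then split according to whether the overshoot occurs before or after $N/2$.

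The paper does exactly this, more economically. It uses the first hitting time $\tilde t$ of $(1+\epsilon)a$ and the next grid point $t^*\ge\tilde t$. The strong Markov property gives a factor $e^{-(\epsilon a)^2/40}$ from $W_{t^*}-W_{\tilde t}\le-\epsilon a$. This factor is then multiplied as follows:
\begin{itemize}
\item if $\tilde t>N/2$, by $\P(W_t\le(1+\epsilon)a\ \forall t\le N/2)\ll\min\{1,a/\sqrt N\}$;
\item if $\tilde t\le N/2$, after slicing $W_{t^*}\in[-ka,(1-k)a]$, by the ballot bound $\min\{1,(k+1)a/\sqrt N\}$ for the remaining walk.
\end{itemize}

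There is a second, smaller problem. The hypothesis only bounds the gap lengths $\Delta_j=\E[G_j^2]$ above by $20$, so $n$ is not controlled by $N$. Your remark that ``the variances are bounded, so the sum over $j$ behaves like an integral'' therefore goes the wrong way. With the bridge estimate taken at the uniform length $20$, many short gaps inflate your sum by an arbitrary factor. You would need to keep $\Delta_j$ in the exponent, e.g. $e^{-2C^2\log a/\Delta_j}\le a^{-C^2/20}\,\Delta_j/(C^2\log a)$, to obtain a genuine Riemann sum. The paper's first-passage argument involves no union over gaps, so this issue never arises there.

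Your ``cleaner alternative'' does not close the gap either. The event that the discrete walk never comes within $\sqrt{\log a}$ of $a$ still needs an upper bound, and that is the original problem. Also, the relative survival factor $O(\sqrt{\log a}/a)$ holds only when the remaining time is $\asymp N$, so the split at $N/2$ is needed again.
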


\begin{proof}  Consider a standard Brownian motion $W_t$.  If $W_t$ stays below $a$ for all continuous times $0\le t\le N$, then it certainly does so at the discrete time points $t_j := \sum_{m=1}^{j} \E [G_m^2]$ corresponding to $\sum_{m=1}^{j} G_m$.  Thus the probability desired in the lemma is bounded below by 
 $$
 \P(W_t \leq a  \text{  for all  } 0 \leq t \leq N) = \frac{1}{\sqrt{2\pi}} \int_{-\frac {a}{\sqrt{N}}}^{\frac{a}{\sqrt{N}}} e^{-u^{2}/2} du, 
 $$
upon using the known exact formula for this Brownian motion probability (see e.g. section 13.4 of Grimmett and Stirzaker~\cite{gs}).

Let $\epsilon>0$ be a small parameter that will be fixed shortly.   The desired probability can be upper bounded by 
$$
 \P(W_t \leq (1+\epsilon)a \text{ for  all }  0 \leq t \leq N) + \P(\sup_{0 \leq t \leq N} W_t \geq (1+\epsilon)a, \text{  but  }  W_{t_j} \leq a \text{ for all } 1 \leq j \leq n). 
 $$ 
As before, the first term above has an exact formula: 
\begin{align*}
\frac{1}{\sqrt{2\pi}} \int_{-(1+\epsilon)\frac{a}{\sqrt{N}}}^{(1+\epsilon)\frac{a}{\sqrt{N}}} e^{-u^{2}/2} du & =  \frac{1}{\sqrt{2\pi}} \int_{-\frac{a}{\sqrt{N}}}^{\frac{a}{\sqrt{N}}} e^{-u^{2}/2} du + O\Big(\epsilon \min\Big\{ \frac{a}{\sqrt{N}}, 1\Big\} \Big) \\
& = (1 + O(\epsilon) ) \frac{1}{\sqrt{2\pi}} \int_{-\frac{a}{\sqrt{N}}}^{\frac{a}{\sqrt{N}}} e^{-u^{2}/2} du .  
\end{align*}
We shall proceed to bound the second probability. Let $\tilde{t}$ denote the infimum of all $0 \leq t \leq N$ for which $W_t = (1+\epsilon)a$. If $\sup_{0 \leq t \leq N} W_t \geq (1+\epsilon)a$ then the (random) point $\tilde{t}$ is well defined, and we shall divide into cases according to whether $0 \leq \tilde{t} \leq N/2$ or $N/2 < \tilde{t} \leq N$. Also let $t^{*}$ be the (random) smallest point, {\em of the form} $t_j$, which is $\geq \tilde{t}$, and let $j^{*}$ be the index for which $t_{j^{*}} = t^{*}$.

Note that 
\begin{eqnarray}
&& \P(N/2 < \tilde{t} \leq N , \text{  but  }  W_{t_j} \leq a \text{ for all } 1 \leq j \leq n) \nonumber \\
& \leq & \P(W_{t^{*}} - W_{\tilde{t}} \leq -\epsilon a,  \;\; \text{and} \;\; N/2 < \tilde{t} \leq N) . \nonumber
\end{eqnarray}
Now $W_{t^{*}} - W_{\tilde{t}}$ is independent of $(W_{t})_{t \leq \tilde{t}}$, and is a mean zero Gaussian with variance $\leq 20$ (by definition of $t^{*}$ and our hypothesis about the variances of the $G_j$). Therefore the above probability is 
\begin{align*} 
&\ll e^{-(\epsilon a)^{2}/40} \P(N/2 < \tilde{t} \leq N) \leq e^{-(\epsilon a)^{2}/40} \P(W_t \leq (1+\epsilon)a \;\; \text{  for all } \; 0 \leq t \leq N/2)\\
& \ll e^{-(\epsilon a)^{2}/40} \min\Big\{1, \frac{a}{\sqrt{N}}\Big\} . 
\end{align*}

One can bound $\P(\tilde{t} \leq N/2 , \text{  but  }  W_{t_j} \leq a \text{ for all } 1 \leq j \leq n)$ in a similar way (and achieving the same bound). Let us split this probability further based on the size of $W_{t^*}$, as
$$ \sum_{k=0}^{\infty} \P(\tilde{t} \leq N/2 , \;\; -ka \leq W_{t^{*}} \leq (1-k)a , \text{  but  }  W_{t_j} \leq a \text{ for all } 1 \leq j \leq n) . $$
When the $k$-th event here occurs, we must in particular have $W_{t^{*}} - W_{\tilde{t}} \leq -(\epsilon + k)a$, and $W_{t_j} - W_{t^{*}} \leq a - W_{t^{*}} \leq (k+1)a$ for all $j^{*} < j \leq n$. Using these conditions, noting that $(W_{t_j} - W_{t^{*}})_{j^{*} < j \leq n}$ is independent of $(W_{t})_{t \leq t^{*}}$, and that $\sum_{j^{*} < m \leq n} \E [G_m^2 ] \asymp N$ when $\tilde{t} \leq N/2$ (given our assumptions that $\E [G_m^2 ] \leq 20$ and that $N$ is large), we find the sum is indeed
$$ \ll \sum_{k=0}^{\infty} e^{-((\epsilon + k) a)^{2}/40} \min\Big\{1, \frac{(k+1)a}{\sqrt{N}}\Big\} \ll e^{-(\epsilon a)^{2}/40} \min\Big\{1, \frac{a}{\sqrt{N}}\Big\} . $$
(Here the upper bound for the random walk probability that $W_{t_j} - W_{t^{*}} \leq (k+1)a$ for all $j^{*} < j \leq n$, which turns out to be the same as the corresponding upper bound if this were Brownian motion on the full interval, follows from e.g. Probability Result 1 of Harper~\cite{HarperLow}.)  

Taking $\epsilon = (\log a)/a$, say, then yields the claimed result.
\end{proof}

 \begin{proposition}\label{prop: mu}
     Let $f(n)$ be a Steinhaus random multiplicative function and $F_z(s)$ be the partial random Euler product over primes up to $z$. Let $T \le (\log z)^{10}$ be large. Let $\mathcal{H}(t)$ be defined as in \eqref{eqn: AJH h(t)} above. Set $\mu = \mu_{z, T}: = \E[|F_z(\tfrac{1}{2} + it)|^{2} \mathbbm{1}_{\mathcal{H}(t)}]$, which is independent of $t$.   
    Then    
    \begin{equation}\label{eqn: mu}
    \mu  \asymp (\log z) \cdot  \min \Big( 1, \frac{\log T}{\sqrt{\log \log z}} \Big).    
    \end{equation}
More precisely, as $T \rightarrow \infty$ we have 
$$
\mu  \sim \prod_{p \le z}\Big(1 - \frac{1}{p}\Big)^{-1} \cdot \frac{1}{\sqrt{2\pi}} \int_{-\kappa}^{\kappa} e^{-u^{2}/2} du, \qquad \text{ with }\qquad \kappa = \frac{\log T}{\sqrt{2\log \log z}}.
$$
  \end{proposition}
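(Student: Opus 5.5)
We will prove Proposition~\ref{prop: mu} by a change of measure (Girsanov-type tilting) followed by a Gaussian approximation of the resulting random walk, so that Lemma~\ref{lemma: rwasymp} can be applied.

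Independence of $\mu$ from $t$ is immediate. The law of $(f(p)p^{-it})_p$ does not depend on $t$, and both $F_z$ and the event $\mathcal{H}(t)$ are functions of these variables only.

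First we factor out the primes $p\le z^{e^{-\tau}}$. By independence, $\mu$ equals $\E[|\prod_{p\le z^{e^{-\tau}}}(1-f(p)p^{-1/2})^{-1}|^2]$ times $\E[|\mathcal{F}_0(\tfrac12)|^2\mathbbm{1}_{\mathcal{H}}]$, since $\mathcal{H}$ depends only on primes in $(z^{e^{-\tau}},z]$. The first factor is $\prod_{p\le z^{e^{-\tau}}}(1-1/p)^{-1}$.

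Next we define the tilted probability measure $\tilde{\P}$ by
\[
\frac{d\tilde{\P}}{d\P}=\frac{|\mathcal{F}_0(\tfrac12)|^2}{\E|\mathcal{F}_0(\tfrac12)|^2}.
\]
This gives $\mu=\prod_{p\le z}(1-1/p)^{-1}\,\tilde{\P}(\mathcal{H})$, and it remains to show
\[
\tilde{\P}(\mathcal{H})\sim\frac{1}{\sqrt{2\pi}}\int_{-\kappa}^{\kappa}e^{-u^2/2}\,du,
\]
together with the $\asymp$ statement. Under $\tilde{\P}$ the $f(p)$ remain independent, and each $f(p)$ has density proportional to $|1-f(p)p^{-1/2}|^{-2}$. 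Write $X_k=\log|\mathcal{F}_{k-1}(\tfrac12)/\mathcal{F}_{k}(\tfrac12)|$ for the contribution of the primes in $(z^{e^{-k}},z^{e^{-k+1}}]$, for $1\le k\le\tau$. Each increment has $\tilde{\E}X_k=1+o(1)$ (the mean of $\mathrm{Re}\sum_p f(p)p^{-1/2}$ is $\approx\sum 1/p\approx1$) and variance $\tfrac12\sum_p 1/p\approx\tfrac12$.

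Set $S_j=\log|\mathcal{F}_j(\tfrac12)|-(\tau-j)$. Up to $1+o(1)$ errors, this is the centred sum of $X_{j+1},\dots,X_\tau$, read from the large-$j$ end as a walk started at $j=\tau$. The upper barrier in \eqref{eqn: AJH h(t)} then reads
\[
S_j\le \tfrac12\log T-(\log T)^{1/100}-5\log(\log\log z-j).
\]
Reindexing by $m=\tau-j$, this says the centred walk of $m$ steps stays below
\[
a_m=\tfrac12\log T-(\log T)^{1/100}-5\log\big(m+(\log T)^{1/200}\big).
\]
The walk has total variance $N\sim\tfrac12\tau\sim\tfrac12\log\log z$, and $a/\sqrt N\approx\tfrac12\log T/\sqrt{\tfrac12\log\log z}=\kappa$. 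The lower barrier in \eqref{eqn: g(t)}, the weaker upper barrier, and the small drift errors all cost only probability $o(\cdot)$; we bound them by Markov/Chernoff-type estimates under $\tilde{\P}$, using that $\tilde{\E}|\mathcal{F}_j|^{-2B}$ is of polynomial size.

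The main obstacle is the asymptotic evaluation, for which there are two steps.

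\emph{Gaussian approximation.} We couple the increments $X_k$ with Gaussians of the same mean and variance, with error $O(e^{-k/2}\cdot\text{small})$ or via the KMT/Berry–Esseen-type coupling used in Harper's Probability Results. Since the increments have exponentially light tails and the barrier level is $a\asymp\log T\to\infty$, a shift of the barrier by $o(a)$ is absorbed in the same way as in Lemma~\ref{lemma: rwasymp}, through the $(1+\epsilon)a$ sandwich.

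\emph{Curved barrier.} The barrier $a_m$ is not constant. It varies by the $5\log(m+(\log T)^{1/200})$ term, which ranges up to $5\log\log\log z$. When $\kappa$ is bounded or small this variation is not $o(a)$. We would handle it by sandwiching between constant barriers: the walk must stay below $a_m$, but for $m\ge M$ (with $M$ a small power of $\log\log z$) the term $5\log m$ is $o(\sqrt N)$ relative to the fluctuation scale. Using ballot estimates, Probability Result 1 of \cite{HarperLow}, the event that the walk comes within $O(\log m)$ of the barrier near time $m$ but not earlier has relative probability $o(1)$ when $a\to\infty$. This step is the delicate one; if needed we would instead appeal to known results for Brownian motion below a barrier $a-g(t)$ with $g(t)=O(\log t)$, namely that the crossing probability changes only by a factor $1+O(\log a/a)$.

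The $\asymp$ bound \eqref{eqn: mu} then follows from the asymptotic for large $T$, since $\int_{-\kappa}^{\kappa}e^{-u^2/2}\,du\asymp\min(1,\kappa)$. For bounded $T$ it follows from the same upper and lower ballot bounds without asymptotics.
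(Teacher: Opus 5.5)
Your route is the paper's route with the proof of Harper's Lemma~4 unpacked. The exact tilt and the Gaussian comparison are what that lemma does, and the barrier perturbations followed by Lemma~\ref{lemma: rwasymp} (with $a=\frac12\log T$, $N\sim\frac12\tau$) are the paper's remaining steps.

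\emph{The gap is in your error control.} The asymptotic must hold uniformly in $z$ as $T\to\infty$. In the application, $T\approx 2x/y$ may tend to infinity arbitrarily slowly while $\log\log z\sim\log\log x$. So the main term $\tilde{\P}(\mathcal{H})\asymp\min(1,\kappa)$ can be far smaller than any fixed function of $T$, and every error must be $o(\min(1,\kappa))$, i.e.\ relative.

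Your treatment of the lower barrier and of the Gaussian approximation gives only additive errors depending on $T$.
\begin{itemize}
\item Markov's inequality with $\tilde{\E}|\mathcal{F}_j(\tfrac12)|^{-2}=1/\E|\mathcal{F}_j(\tfrac12)|^{2}\approx e^{-(\tau-j)}$, summed over $j$, gives $\tilde{\P}(\text{lower barrier fails})\ll T^{-B}$. Higher negative moments only give another fixed power of $T$. This is not $o(\kappa)$ once $\log\log z\gg T^{2B}(\log T)^{2}$.
\item A coupling of the increments with Gaussians has an exceptional event whose probability is small only in terms of $T$. It is governed by the increments from the smallest primes $p\approx z^{e^{-\tau}}$, whose size is fixed by $T$. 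Incidentally, your $O(e^{-k/2}\cdot\text{small})$ has the wrong monotonicity: with your indexing, the increments with $k$ near $\tau$ are the least Gaussian.
\end{itemize}
So as written, these two steps do not yield the asymptotic when $\kappa\to 0$ quickly.

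What is missing is to estimate each bad event jointly with the upper barrier, using independence of the tilted increments, as you already do for the curved barrier. Suppose the lower barrier is first crossed, or the coupling first fails, at walk step $m$.
\begin{itemize}
\item Bound the probability of staying below the upper barrier for the remaining $\tau-m$ steps by the ballot estimate $\ll\min\{1,(a-S_m+1)/\sqrt{\tau-m}\}$.
\item When $m>\tau/2$, bound the probability of surviving up to step $m$ by $\ll\min\{1,a/\sqrt{m}\}$.
\item Use exponential moments of the increments and Cauchy--Schwarz to handle the dependence between the bad event and $S_m$.
\end{itemize}
This gives a relative loss $\ll T^{-c}$ from the lower barrier. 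From the coupling it gives $\ll\sum_m\theta_m^{1/2}$, where $\theta_m$ is the per-step failure probability, which decays rapidly in $m$. Both are $o(1)$ as $T\to\infty$, uniformly in $z$. This relative control is what the paper extracts from the proof of Harper's Lemma~4 and from the Appendix calculations behind Probability Results~1 and~2.

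Two minor points:
\begin{itemize}
\item The upper bound in \eqref{eqn: g(t)} is implied by \eqref{eqn: AJH h(t)}, so it costs nothing.
\item When $\kappa\asymp 1$, the curvature $5\log(m+(\log T)^{1/200})=O(\log\log T)$ is already $o(a)$. The curved barrier only matters when $\log\log\log z\gtrsim\log T$.
\end{itemize}
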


 \begin{proof}
The fact that the value of $\mu$ is independent of $t$ follows from translation invariance in law of the random Euler product (i.e. the fact that the law of the sequence $f(p)p^{-it}$ 
for primes $p$ is the same for all $t \in \R$). For simplicity of writing, in what follows we shall take $t=0$. 

The statement \eqref{eqn: mu} can be readily deduced from an approximate Girsanov type result of Harper~\cite[Lemma 4]{HarperLow}, followed by a suitable probabilistic estimate. First note that $\E[|F_z(\frac 12)|^{2}] = \prod_{p \le z}(1 - \frac{1}{p})^{-1} \asymp \log z$. Then we apply \cite[Lemma 4]{HarperLow}, with $t_j \equiv 0$ and $\sigma = 0$, to deduce that $\frac{\E[|F_z(\frac 12)|^2 \mathbbm{1}_{\mathcal{H}(0)}]}{\E[|F_z(\frac12)|^{2}]} \asymp $
\begin{equation}\label{eqn: prob}
   \P \Big( -\frac{B}{2}\log T - (B+1)j -O(1) \le \sum_{m=1}^{j}G_m \le \frac{\log T}{2} - (\log T)^{\frac{1}{100}} + h(j), \text{ for all } j\le \tau \Big), 
\end{equation}
where $h(j) = -5\log ((\log T)^{\frac 1{200}} + j) + O(1)$  and (we recall) $\tau \asymp \log \log z$, and $G_m$ are independent Gaussians with mean zero and variance $\frac 12 + o(1)$ (as $T \rightarrow \infty$). Note that the applicability of \cite[Lemma 4]{HarperLow} relies upon the fact that $\log(z^{e^{-\tau}})$ is large enough compared with $\log T$. The Probability Results 1, 2 in \cite{HarperLow} show that the probability in \eqref{eqn: prob} is $\asymp \min (1, \frac{\log T}{\sqrt{\tau}}) \asymp \min (1, \frac{\log T}{\sqrt{\log \log z}})$.

We next prove the final part of the proposition. 
 Since the smallest primes involved in the definition of $\mathcal{H}(0)$ are of size $z^{e^{-\tau}}$, which tends to infinity with $T$, the proof of \cite[Lemma 4]{HarperLow} actually implies that $\frac{\E[|F_z(\frac 12)|^2 \mathbbm{1}_{\mathcal{H}(0)}]}{\E[|F_z(\frac 12)|^{2}]} $ is asymptotic to the probability of the ballot event in \eqref{eqn: prob}. We estimate the probability. Again, since $T \rightarrow \infty$, the same calculations leading to Probability Results 1, 2 in \cite{HarperLow} (see the Appendix there) show not just that the lower bound $ -\frac{B}{2}\log T - (B+1)j -O(1)$ and the terms $- (\log T)^{\frac 1{100}} + h(j)$ in the upper bound do not alter the order of magnitude of the probability \eqref{eqn: prob}, but in fact that as $T \rightarrow \infty$ it is asymptotic to
$$
\P \Big( \sum_{m=1}^{j}G_m \le \frac{\log T}{2}, \text{  for all  }  j\le \tau \Big) . 
$$
Here we can apply Lemma \ref{lemma: rwasymp}, with $a = \frac 12\log T$ and $N = (\frac 12 + o(1))\tau = (\frac 12 + o(1))\log\log z$, so that $a/\sqrt{N} = (1+o(1))(\log T)/\sqrt{2\log\log z}$. This concludes the proof.
  \end{proof}

 We next show that the expected contribution of $|F_z(\tfrac 12+it)|^2$ when $\mathcal{G}(t)$ holds but $\mathcal{H}(t)$ fails is small when compared with $\mu$. In combination with Markov's inequality, this will later allow us to replace $\mathcal{G}(t)$ with $\mathcal{H}(t)$, as per Step 2 of the strategy outlined above.  Note that since ${\mathcal H}(t)$ imposes stronger restrictions than ${\mathcal G}(t)$, the expression ${\mathbbm 1}_{{\mathcal G}(t)}- {\mathbbm 1}_{{\mathcal H}(t)}$ is the indicator function of the event that ${\mathcal G}(t)$ holds but ${\mathcal H}(t)$ fails.  

 \begin{proposition}\label{prop: Hfail}
 Let the situation be the same as in Proposition~\ref{prop: mu}. Then 
 \[\E[(\mathbbm{1}_{\mathcal{G}(t)}-\mathbbm{1}_{\mathcal{H}(t)})|F_z(\tfrac 12+it)|^{2} ]  \ll \frac{\log z}{\sqrt{\log \log z}} (\log T)^{\frac 1{25}}. 
 \]
 \end{proposition}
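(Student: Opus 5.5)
By translation invariance in law we may take $t=0$. The event that $\mathcal{G}(0)$ holds but $\mathcal{H}(0)$ fails is contained in the union over $0\le j\le \tau-1$ of the events $E_j$ on which $\mathcal{G}(0)$ holds but \eqref{eqn: AJH h(t)} fails at index $j$:
$$
\sqrt{T}e^{\tau-j}\frac{\exp(-(\log T)^{1/100})}{(\log\log z-j)^5} < |\mathcal{F}_j(\tfrac12)| \le \sqrt{T}e^{\tau-j}\exp((\log T)^{1/100})(\log\log z-j).
$$
So it suffices to bound $\E[\mathbbm{1}_{E_j}|F_z(\tfrac12)|^2]$ for each $j$ and sum over $j$. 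Write $F_z(\tfrac12)=\mathcal{F}_0(\tfrac12)\cdot\prod_{p\le z^{e^{-\tau}}}(1-f(p)p^{-1/2})^{-1}$. The small-prime factor is independent of all the barrier events and contributes $\prod_{p\le z^{e^{-\tau}}}(1-1/p)^{-1}\asymp e^{-\tau}\log z$. This reduces the problem to bounding $\E[\mathbbm{1}_{E_j}\mathbbm{1}_{\mathcal{G}(0)}|\mathcal{F}_0(\tfrac12)|^2]$ by roughly $e^{\tau}(\log T)^{1/25}/\sqrt{\tau}$ after summing over $j$.

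For this I would again use the approximate Girsanov result \cite[Lemma 4]{HarperLow}, as in the proof of Proposition~\ref{prop: mu}. After tilting by $|\mathcal{F}_0(\tfrac12)|^2/\E|\mathcal{F}_0(\tfrac12)|^2$, the quantity $\log|\mathcal{F}_j(\tfrac12)| - (\tau-j)$ behaves like a Gaussian random walk $\sum_{m\le \tau-j}G_m$, with increments of variance about $\frac12$, indexed from the top scale downwards. The expectation is then $\ll e^{\tau}$ times the probability of the following event for the walk (re-indexing steps so that $k=\tau-j$):
\begin{enumerate}
\item it stays below $\frac12\log T+(\log T)^{1/100}+\log(\log\log z-j)+O(1)$ at every step (this is $\mathcal{G}$);
\item at step $k$ its value lies in the window $[\frac12\log T-(\log T)^{1/100}-5\log(\log\log z-j),\ \frac12\log T+(\log T)^{1/100}+\log(\log\log z-j)]$.
\end{enumerate}
Note that $\log\log z - j \asymp (\log T)^{1/200}+k$ is comparable to the time index.

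To bound this probability I would split the walk at step $k$ and use the ballot-type estimates (Probability Results 1, 2 of \cite{HarperLow}).
\begin{itemize}
\item \emph{Walk up to step $k$.} It must stay below a barrier $b$ and end within distance $w\ll(\log T)^{1/100}+\log(k+(\log T)^{1/200})$ of $b$. This costs $\ll \frac{b\,w^2}{k^{3/2}}$ when $k\ge b^2$, and $\ll\min(1,\frac{w^2}{k})\cdot\frac{1}{\sqrt k}$-type bounds otherwise; here $b\asymp \log T$.
\item \emph{Walk after step $k$.} It starts within distance $w$ of the barrier and must stay below it for the remaining $\tau-k$ steps. This costs $\ll \min(1,\frac{w+1}{\sqrt{\tau-k}})$.
\end{itemize}
Summing the product over $k$, the range $k\le (\log T)^2$ should give about $\frac{w^2\cdot w}{\sqrt\tau}$ (bounding the first factor by a local-CLT density times the window width). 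The range $k\ge(\log T)^2$ gives $\frac{\log T\cdot w^3}{\sqrt\tau}\sum_k k^{-3/2}\ll \frac{w^3}{\sqrt\tau}$. Since $w^3\ll (\log T)^{3/100}$ up to logarithms in $k$, which are absorbed into the $k^{-3/2}$ decay, the total is $\ll e^{\tau}\frac{(\log T)^{1/25}}{\sqrt\tau}$. Multiplying by $e^{-\tau}\log z$ and using $\tau\sim\log\log z$ gives the claim.

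The main obstacle is this final random-walk sum. The case $k$ small, where the barrier at height $\approx\frac12\log T$ has not yet been reached, needs care. The event is genuinely rare there (the walk needs $k\gtrsim(\log T)^2$ steps to climb to height $\frac12\log T$), so the Gaussian tail makes those $k$ negligible. I would handle this by a direct Gaussian tail bound $e^{-(\log T)^2/(4k)}$ for $k\le (\log T)^{2}/\log\log T$, and by the ballot bounds above otherwise. A secondary point is checking that \cite[Lemma 4]{HarperLow} applies with barriers that vary logarithmically in $j$, but the same smoothness is already used in Proposition~\ref{prop: mu}.
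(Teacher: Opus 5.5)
Your architecture is the paper's. You take a union over the scale at which the stronger barrier fails (the paper takes the last such scale $J$ but never uses that extra information, so your plain union bound is equally good). You remove the primes $\le z^{e^{-\tau}}$, apply the comparison \cite[Lemma 4]{HarperLow}, and split the walk at the failing scale. The paper instead splits the Euler product itself, using independence of $\mathcal{A}_J(t)$ and $\mathcal{B}_J(t)$, so that Lemma 4 is only applied to standard barrier events; splitting the Gaussian walk is equivalent.

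The gap is in the step you flag yourself, the range $k\lesssim(\log T)^2$. Your claimed total $w^3/\sqrt{\tau}$ is the right answer, but none of the bounds you state produces it.
\begin{enumerate}
\item Your bound $\min(1,w^2/k)k^{-1/2}$ for $k<b^2$ is false. By reflection, at $k\asymp b^2$ the probability of staying below $b$ while ending within $w$ of it is $\asymp w^2/k$, larger by a factor $\sqrt{k}\asymp\log T$. That bound would in any case sum to $\asymp w$, not $w^2$.
\item The ``local-CLT density times window width'' bound, even keeping the Gaussian factor, gives $\sum_{k\le(\log T)^2}\frac{w}{\sqrt k}e^{-(\log T)^2/(4k)}\asymp w\log T$. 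The total is then $\asymp w^2\log T/\sqrt{\tau}$, missing the target $(\log T)^{1/25}/\sqrt{\tau}$ by nearly a full power of $\log T$.
\item On your Gaussian-tail range $k\le(\log T)^2/\log\log T$ alone, this sum is still of size about $w(\log T)^{3/4}$, because the tail factor there is only as small as $(\log T)^{-1/4}$.
\end{enumerate}
The underlying misconception is that $k\asymp(\log T)^2$ is a rare regime. In fact a positive proportion of the true sum comes from there. What makes it small is the ballot factor $\asymp bw/k$ for staying below the barrier while ending near it, and your small-$k$ treatment drops that factor.

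The fix is to use, for the first segment and uniformly in $k$, the barrier-aware ballot estimate the paper uses in \eqref{eqn: prob2} (Probability Result 1 of \cite{harper2019partition}):
\[
\P\Big(\max_{i\le k}S_i\le b,\ S_k\ge b-w\Big)\ll\min\Big\{1,\frac{b}{\sqrt{k}}\Big\}\frac{(1+w)^2}{k}.
\]
With this, no Gaussian tail is needed. For $k\le(\log T)^{10}$ one gets $\sum_{k\le b^2}w^2/k+\sum_{b^2<k}bw^2/k^{3/2}\ll w^2\log\log T$. The range $k>(\log T)^{10}$ is negligible by the $k^{-3/2}$ decay.

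Then multiply by the second-segment factor $\ll w/(1+\sqrt{\tau-k})$, splitting according to whether $k\le\tau/2$. This split matters: $\log T$ can be comparable to $\log\log z$, so $\tau$ may be far smaller than $(\log T)^2$, and you cannot take $\tau-k\asymp\tau$ throughout $k\le(\log T)^2$. The result is $\ll(\log T)^{3/100}(\log\log T)^{O(1)}/\sqrt{\tau}\ll(\log T)^{1/25}/\sqrt{\log\log z}$. Restoring $\E|F_z(\tfrac12)|^2\asymp\log z$ gives the proposition, exactly as in the paper's computation.
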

\begin{proof}
The argument will broadly follow the proof of Multiplicative Chaos Result 2 of Harper~\cite{Harperlargevalue}, but with various changes to reflect the different sizes of barrier (involving $T$) that we are working with here, and the simplification that we have no need to insert a ``middle" barrier event $\mathcal{D}^{*}(t)$ (because our initial barrier $\mathcal{G}(t)$ already holds exactly at the point $t$, rather than at an approximating point).

Suppose $\mathcal{G}(t)$ holds but $\mathcal{H}(t)$ fails. Then there must exist some $0\le J \le \tau-1$ such that 
\begin{equation}\label{eqn: smallp1}
(  \sqrt{T}e^{\tau-j})^{-B} \le |{\mathcal F}_j(\tfrac 12+it)|  \le \sqrt{T} e^{\tau-j}  \frac{\exp(-(\log T)^{\frac 1{100}})}{(\log \log z - j)^{5}}  
\end{equation}
for all $J+1\le j \le \tau-1$, and 
\begin{equation}\label{eqn: smallp2}
   \begin{split}
       \sqrt{T} e^{\tau-J} \frac{\exp(-(\log T)^{\frac 1{100}})}{(\log \log z - J)^{5}}  <  |{\mathcal F}_J(\tfrac 12+it)| & \le \sqrt{T} e^{\tau-J} \exp((\log T)^{\frac 1{100}})  (\log \log z - J).
   \end{split}  
\end{equation}
We use ${\mathcal A}_J(t)$ to denote that all the inequalities \eqref{eqn: smallp1} and \eqref{eqn: smallp2} hold. 

   Combining \eqref{eqn: smallp2} with the definition of $\mathcal{G}(t)$ gives that for all $0\le  j \le J-1$, we have 
   \begin{align}\label{eqn: bigp}
 \Big(  \sqrt{T}e^{\tau-j}\Big)^{-B-1}  \frac{\exp(- (\log  T)^{\frac 1{100}}) }{\log \log z -J} 
 &\le \prod_{z^{e^{-J}} <p \le z^{e^{-j}} } \Big |1-\frac{f(p)}{p^{\frac 12+ it}}\Big|^{-1} \nonumber  \\
 & \le \exp(J-j+2 (\log T)^{\frac 1{100}}) (\log \log z - j)^{6}.
   \end{align}
 Let ${\mathcal B}_J(t)$ denote the event that all inequalities \eqref{eqn: bigp} hold. Note that ${\mathcal A}_J(t)$ only depends on $f(p)$ for primes $p\le z^{e^{-J}}$ while ${\mathcal B}_J(t)$ only involves $f(p)$ for primes $z^{e^{-J}} < p\le z$, so that ${\mathcal A}_J(t)$ and ${\mathcal B}_J(t)$ are independent. 
This leads to 
\begin{align*}
\E[(\mathbbm{1}_{\mathcal{G}(t)}-\mathbbm{1}_{\mathcal{H}(t)}) &|F_z(\tfrac 12+it)|^{2}] \le \sum_{J=0}^{\tau-1}\E[\mathbbm{1}_{\mathcal{A}_J(t)}\mathbbm{1}_{\mathcal{B}_J(t)}|F_z(\tfrac 12+it)|^{2} ]  \\ 
& = \sum_{J=0}^{\tau-1}\E\Big[\mathbbm{1}_{\mathcal{A}_J(t)} \prod_{p\le z^{e^{-J}}} \Big|1-\frac{f(p)}{p^{\frac 12+it}}\Big|^{-2}\Big] \E\Big[\mathbbm{1}_{\mathcal{B}_J(t)} \prod_{ z^{e^{-J}}<p \le z} \Big|1-\frac{f(p)}{p^{\frac 12+it}}\Big|^{-2}\Big].
\end{align*}
We next use the probability results in \cite{HarperLow}. In particular, we use \cite[Lemma 4]{HarperLow} to derive that 
\[\frac{\E[\mathbbm{1}_{\mathcal{B}_J(t)} \prod_{ z^{e^{-J}}<p \le z} |1-\frac{f(p)}{p^{\frac 12+it}}|^{-2}]}{\E[ \prod_{ z^{e^{-J}}<p \le z} |1-\frac{f(p)}{p^{\frac 12+it}}|^{-2}]} \ll \P \Big ( \sum_{m=1}^{j}G_m \le  2 (\log T)^{\frac 1{100}} + h(j),~~\forall \, 1\le j \le J  \Big),   \]
where $h(j) = 6\log (\log \log z -J+ j) +O(1)\le 6 \log (\log \log z - J) +6\log j +O(1)$
 and $G_m$ are independent Gaussian random variables with mean zero and variance $\frac 12+o(1)$ (as $T \rightarrow \infty$). We invoke \cite[Probability Result 1]{HarperLow} and see that this probability is 
  \begin{equation}\label{eqn: prob1}
 \ll  \frac{(\log T)^{\frac 1{100}}  + \log (\log \log z - J) }{1+\sqrt{J}} \ll  \frac{(\log T)^{\frac 1{100}} + \log (\tau - J) }{1+\sqrt{J}}  ,     
 \end{equation}
 where in the last step we used that $\log \log z - J = \tau - J + (\log T)^{\frac 1{200}} + O(1)$.  
 
 In the same way, we may bound 
\[
\frac{\E[\mathbbm{1}_{\mathcal{A}_J(t)} \prod_{ p\le z^{e^{-J}}} |1-\frac{f(p)}{p^{\frac 12+it}}|^{-2}]}{\E[ \prod_{  p\le z^{e^{-J}}} |1-\frac{f(p)}{p^{\frac 12+it}}|^{-2} ]}
\] 
by the probability that the following two events hold (with $G_m$ independent Gaussians with mean zero and variance $\tfrac 12+o(1)$ as above) 
\[ 
 \max_{j\le \tau-J} \sum_{m=1}^{j} G_m\le \tfrac{1}{2}\log T + (\log T)^{\frac 1{100}}  +  \log (\log \log z - J) +O(1),
 \] 
 and  
 \[ 
 \sum_{m=1}^{\tau-J} G_m \ge  \tfrac{1}{2}\log T - (\log T)^{\frac 1{100}}  -5 \log (\log \log z - J) -O(1).
 \]
 Applying the Ballot Theorem (see e.g. Probability Result 1 of Harper~\cite{harper2019partition} for a suitable version), this probability is 
\begin{align} \label{eqn: prob2}
& \ll \min\Big\{1, \frac{\log T + \log(\log \log z - J)}{\sqrt{\tau-J}}\Big\} \cdot \frac{((\log T)^{\frac 1{100}} + \log (\log \log z - J ) )^{2}}{\tau - J} \nonumber \\ 
 & \ll \min\Big\{1, \frac{\log T + \log(\tau - J)}{\sqrt{\tau-J}}\Big\} \cdot \frac{((\log T)^{\frac 1{100}} + \log (\tau - J ) )^{2}}{\tau - J} .   
\end{align}

 Combining \eqref{eqn: prob1} and \eqref{eqn: prob2} together, it follows that  $\frac{\E[(\mathbbm{1}_{\mathcal{G}(t)}-\mathbbm{1}_{\mathcal{H}(t)}) |F_z(\frac{1}{2}+it)|^{2}]}{\E[|F_z(\frac{1}{2}+it)|^{2}]} $ is
 \[
   \ll \sum_{J=0}^{\tau -1} \min\Big\{1, \frac{\log T + \log(\tau - J)}{\sqrt{\tau-J}}\Big\}  \cdot \frac{((\log T)^{\frac{1}{100}} + \log (\tau - J ) )^{2}}{\tau - J} \cdot \frac{(\log T)^{\frac{1}{100}} + \log (\tau - J) }{1+\sqrt{J}} .
 \]
The sum over $J$ can be bounded straightforwardly.   For those $J$ with $\tau-J\le (\log T)^{10}$, we upper bound the first factor  by $1$ and get a contribution that is 
\[ 
\ll (\log T)^{\frac{3}{100}}   \sum_{\tau- (\log T)^{10} \le J \le \tau-1 } \frac{1}{(\tau - J)(1+\sqrt{J})} \ll (\log T)^{\frac 3{100}} \frac{\log \log T}{\sqrt{\log \log z}} \ll \frac{(\log T)^{\frac 1{25}}}{\sqrt{\log \log z}},
\]
upon distinguishing whether $J\le \tau/2$ or not.   
For those $J$ with $\tau-J> (\log  T)^{10}$ (this can only occur if $\log T \leq \tau^{1/10} \leq (\log\log z)^{1/10}$), we can bound the contribution crudely by 
  \[    (\log T)^{1+\frac{3}{100}} \sum_{J\le \tau - (\log T)^{10}} \frac{(\log (\tau -J))^4}{(\tau -J)^{3/2}} \frac{1}{1 + \sqrt{J}}  \ll \frac{(\log T)^{1+\frac{3}{100}} (\log\log T)^4}{\sqrt{\tau} (\log T)^5} \ll \frac{1}{\sqrt{\log \log z}} .     \]
Recalling that $\E[|F_z(\frac{1}{2}+it)|^{2}] \asymp \log z$, we conclude that $\E[(\mathbbm{1}_{\mathcal{G}(t)}-\mathbbm{1}_{\mathcal{H}(t)}) |F_z(\frac{1}{2}+it)|^{2}]$ satisfies the claimed bound.
\end{proof}

The final (vital) ingredient in proving Theorem~\ref{thm: concentration} will be the following ``barrier-modified" variance estimate, crucially exploiting the presence of $\mathcal{H}(t)$ (to control the near diagonal contribution when everything is expanded out), whose proof we postpone for a moment.
\begin{proposition}\label{prop: concentration}
Let $f$ be a Steinhaus random multiplicative function. Let $F_z(s)$ be the random Euler product of $f$ over primes up to $z$. Let $T\le (\log z)^{10}$ be large and $K_T(t)$, $\mathcal{H}(t)$ and $\mu$ be as before. 
Then, with $\TT = T(\log T)^{100}$ as before, 
\begin{equation}\label{eqn: AJHdisp}
    \begin{split}
& \E\Big[ \Big|\int_{-\TT}^{\TT}|F_z(1/2+it)|^{2} K_T(t)^{2} \mathbbm{1}_{\mathcal{H}(t)} dt- \mu \int_{-\TT}^{\TT}K_T(t)^{2}  dt\Big|^{2}\Big] \ll 
 ( \mu T)^{2} \exp(-( \log T)^{\frac 1{100}}).       
    \end{split}
\end{equation}
\end{proposition}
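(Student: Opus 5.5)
Write $X(t) := |F_z(\tfrac12+it)|^2 \mathbbm{1}_{\mathcal{H}(t)}$. Since $\E[X(t)]=\mu$ for every $t$ (Proposition \ref{prop: mu}), expanding the square shows the left side of \eqref{eqn: AJHdisp} equals
$$
\int_{-\TT}^{\TT}\int_{-\TT}^{\TT} K_T(t_1)^2K_T(t_2)^2\big(\E[X(t_1)X(t_2)]-\mu^2\big)\,dt_1dt_2 .
$$
So the task is a covariance bound for $X(t_1),X(t_2)$, integrated against the kernel weights. Recall that $\int K_T^2\asymp T$, and that $\int K_T(t)^2 dt$ over any unit interval is $\ll 1$.

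\emph{Splitting by scale.} For a pair $t_1,t_2$ with $|t_1-t_2|\ge 1$, put $\Delta=|t_1-t_2|$ and define the cutoff index $k$ by $z^{e^{-k}}\approx \exp(C\Delta^{-1})$-type threshold; more precisely, pick the cutoff so that primes above $w:=w(\Delta)$ decorrelate, with $\log w \approx 1/\Delta$ in the near regime and $w$ a fixed power of $e^{\Delta}$ in the far regime. Factor $F_z=F_w\cdot F_{w,z}$, and split $\mathcal{H}(t)$ into its conditions on $\mathcal{F}_j$ for $j$ above and below the index corresponding to $w$.

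The large-prime factors $|F_{w,z}(\tfrac12+it_i)|^2\mathbbm{1}_{\cdot}$ at the two points are nearly independent when $\Delta\log w$ is large. Here I would follow the argument behind Lemma~\ref{lem: slice} mentioned in the introduction, namely an approximate-Girsanov comparison of the joint law of the prime sums $\sum_{p}\Re f(p)p^{-1/2-it_i}$ for $w<p\le z$ with independent copies. Their covariance is $\sum_p p^{-1}\cos(\Delta\log p)$, which is small for $p>w$. This lets the expectation factor, up to a factor $1+o(1)$, into the product of the two one-point large-prime expectations. Translation invariance in law then makes each of these equal to the $t$-independent quantity $\mu_{w,z}$ (with barriers).

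For the small-prime part I would use the $\mathcal{H}$ barrier directly: on $\mathcal{H}(t_i)$, $|\mathcal{F}_{j}|\le \sqrt T e^{\tau-j}\exp(-(\log T)^{1/100})(\log\log z-j)^{-5}$. Combined with $\mathcal{H}$ being a pointwise constraint, this bounds the joint expectation of the two small-prime pieces by roughly the product of their means times $T e^{-(\log T)^{1/100}}/(\ldots)$ when $\Delta$ is small. For $\Delta\le 1$ I would use the same barrier bound crudely, with $w$ essentially $z^{e^{-\tau}}$.

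\emph{Far regime.} For $\Delta$ large, at least $\exp((\log T)^{1/100})$ say, I would instead handle primes up to $w$ with Lemma~\ref{lem: concentration}. Take $w$ with $\log w$ a small power of $\Delta$; the lemma shows the small-prime product concentrates, and then the dependence between the two points is negligible. This makes $\E[X(t_1)X(t_2)]-\mu^2 = o(\mu^2)$ uniformly, with a rate like $\exp(-(\log T)^{1/100})$. Integrating over $t_1,t_2$ against $K_T^2K_T^2$ gives $\ll \mu^2T^2\exp(-(\log T)^{1/100})$.

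\emph{Near-diagonal regime.} Here no cancellation is needed. It suffices to bound $\int\int_{|t_1-t_2|\le \Delta_0}\E[X(t_1)X(t_2)]$, summed dyadically over $\Delta$. For fixed dyadic $\Delta\in[e^{-j-1},e^{-j}]\cdot(\log z)/\ldots$, the primes above the decorrelation scale contribute a factor $\ll \mu_{\text{large}}^2$. The primes below contribute at most $(\sup_{\mathcal{H}}|\mathcal{F}_j|^2)\cdot\E|\mathcal{F}_j|^2 \ll T e^{2(\tau-j)}e^{-2(\log T)^{1/100}}(\log\log z-j)^{-10}$ against a ``measure'' of pairs $\ll T\cdot e^{j}/\log z$. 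After normalising by $\mu^2$ (using Proposition~\ref{prop: mu}, including its lower bound $\mu\gg \log z\min(1,\log T/\sqrt{\log\log z})$), each $j$ gives $\ll T^2 e^{-2(\log T)^{1/100}}(\log\log z-j)^{-10}\cdot(\text{ballot factors})$. The $(\log\log z-j)^{-5}$ makes the sum over $j$ converge, which yields the bound.

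\emph{Main obstacle.} The hardest step is the decorrelation of the large-prime factors while barrier indicators are present, with precision $1+o(1)$, and simultaneously controlling the ratio between the barrier-restricted one-point expectations and $\mu$ at the scale split. Getting the correct normalisation here, so that the near-diagonal loss of $T$ is exactly compensated by the $\exp(-(\log T)^{1/100})$ saving built into $\mathcal{H}$, needs care. I would first try to reduce this to Harper's Lemma 4 with two points $t_1,t_2$ and Gaussian comparison of the correlated increments.
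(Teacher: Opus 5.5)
\textbf{There is a genuine gap, in how you get cancellation against $\mu^2$ away from the diagonal.}

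Several pieces are right. Your use of Lemma~\ref{lem: slice}-type decorrelation for the barrier-carrying primes is right. So is your near-diagonal treatment, which is essentially what the paper does in the window $|t-u|\le Ce^{-\frac12(\log T)^{1/200}}$: bound one Euler product by the $\mathcal{H}$ barrier at the scale $\log p\approx 1/|t_1-t_2|$, decorrelate above, and let $(\log\log z-j)^{-5}$ make the scales summable.

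\textbf{The pointwise claim is false.} You assert that in the far regime $\E[X(t_1)X(t_2)]-\mu^2=o(\mu^2)$ uniformly. Relative to $\E|F_w(\tfrac12+it_1)|^2\,\E|F_w(\tfrac12+it_2)|^2$, the primes $p\le w$ contribute exactly $\prod_{p\le w}\big(1+2\cos(\Delta\log p)/p+O(p^{-2})\big)$. For $\Delta\ge1$ the small primes, say $p\le \exp(A(\log(2+\Delta))^2)$, never decorrelate pointwise. Already the factor from $p=2$ is bounded away from $1$ for most $\Delta$, and one has only the bound \eqref{eqn: p1}.

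\textbf{The lemma cannot be used the way you use it.} This factor equals $1$ only on average over $t_1,t_2$. Lemma~\ref{lem: concentration} is exactly such an averaged statement. It cannot give a pointwise covariance bound, and it needs a single fixed $w$, so $w=w(\Delta)$ is not admissible.

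\textbf{Your choice of $w$ also breaks the factorization.} With $\log w$ a power of $\Delta$, you have $w>z^{e^{-\tau}}$ for most $\Delta$. Part of the barrier $\mathcal{H}$ then falls into your ``small-prime'' factor, where the barrier-free lemma says nothing. The one-point large-prime expectations also no longer multiply back to $\mu$. This is precisely the normalisation obstacle you flag, and your scheme does not resolve it.

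\textbf{The intermediate range fails similarly.} For $1\le\Delta\le\exp((\log T)^{1/100})$, the primes that fail to decorrelate lie below $z^{e^{-\tau}}$, where $\mathcal{H}$ imposes nothing. A loss of $Te^{-(\log T)^{1/100}}$, integrated over pairs of weighted measure $\asymp T\exp((\log T)^{1/100})$, falls short of the required saving.

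\textbf{The missing idea is to split once and for all at $w=z^{e^{-\tau}}$}, below which $\mathcal{H}$ imposes no condition.
\begin{itemize}
\item Then $\E[X(t)X(u)]=P_1(t,u)P_2(t,u)$ exactly, with $P_1$ barrier-free, and $\mu=\E|F_w(\tfrac12)|^2\cdot\E[\mathbbm{1}_{\mathcal{H}(0)}|\mathcal{F}_0(\tfrac12)|^2]$ exactly.
\item Since $\log w=\exp((\log T)^{1/200}+O(1))$, Lemma~\ref{lem: slice} decorrelates $P_2$ up to a factor $1+O(e^{-\frac12(\log T)^{1/200}}/|t-u|)$. This holds for all $|t-u|\ge Ce^{-\frac12(\log T)^{1/200}}$, far below $1$.
\item The resulting error needs only the crude pointwise bound \eqref{eqn: p1}.
\item The main term is the constant $\big(\E[\mathbbm{1}_{\mathcal{H}(0)}|\mathcal{F}_0(\tfrac12)|^2]\big)^2$ times $\int\int P_1K_T^2K_T^2$. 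Re-insert the near-diagonal pairs at cost $O(\mu^2T\exp(4(\log T)^{1/200}))$ via \eqref{eqn: small}. Lemma~\ref{lem: concentration} then evaluates this as $\big(\int\E|F_w(\tfrac12)|^2K_T^2\big)^2+O(T(\log w)^4)$. This is effective exactly because $\log w=o(\sqrt T)$.
\end{itemize}
So no intermediate regime is needed, and the barrier is used only in the window $|t-u|\le Ce^{-\frac12(\log T)^{1/200}}$.

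One smaller point in that window: barrier decorrelation needs $|h|\sqrt{\log p}$ large. The paper therefore treats primes in $(e^{C/|h|},e^{C^2/h^2}]$ with only a barrier-free moment bound. It does not claim decorrelation of the barrier constraints from $e^{C/|h|}$ upward, as your two-range split implicitly does.
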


From the definition of $K_T(t)$ (see \eqref{eqn: F}) note that 
$$ 
K_T(t)^2 = \frac{T^2 (e^{1/T} +e^{-1/T}-2)}{1/4+t^2} + 4 \frac{T^2 (\sin (t/T))^2}{1/4+t^2}, 
$$ 
from which it follows that 
\begin{align} 
\label{3.21} 
\int_{-\TT}^{\TT} K_T(t)^2 dt &= \int_{-\infty}^{\infty} K_T(t)^2 dt +O\Big(\int_{\TT}^{\infty} \frac{T^2}{t^2}dt\Big) = \int_{-\infty}^{\infty} \frac{4T^2 (\sin (t/T))^2}{1/4+t^2} dt +O\Big(\frac{T}{(\log T)^{100}}\Big) \nonumber \\
&= T \int_{-\infty}^{\infty} \frac{4 (\sin u)^2}{1/(4T^2) +u^2} du + O\Big(\frac{T}{(\log T)^{100}}\Big) = 4\pi T + O\Big(\frac{T}{(\log T)^{100}}\Big). 
\end{align} 
Thus the bound in Proposition \ref{prop: concentration} shows that the variance computed there is indeed small, compared with $( \mu \int_{-\TT}^{\TT}K_T(t)^{2}  dt )^2$.   We postpone the proof of this proposition, showing first how Theorem~\ref{thm: concentration} may be derived from it.   

\begin{proof}[Proof of Theorem~\ref{thm: concentration}, assuming Proposition~\ref{prop: concentration}]
Proposition \ref{prop3.3} shows that with probability at least $ 1- \exp(-(\log T)^{\frac 1{100}})$, 
\[
\int_{-\TT}^{\TT} |F_z(\tfrac 12+it)|^{2}K_T(t)^{2} dt = \int_{-\TT}^{\TT} |F_z(\tfrac 12+it)|^{2}K_T(t)^{2} \mathbbm{1}_{\mathcal{G}(t)}dt.  
\]
Now we use Proposition~\ref{prop: Hfail} together with Markov's inequality to get that with probability $1- O((\log T)^{-\frac 15})$, the right hand side is 
\[
\int_{-\TT}^{\TT} |F_z(\tfrac 12+it)|^{2}K_T(t)^{2}\mathbbm{1}_{\mathcal{H}(t)} dt +  O\Big(\int_{-\TT}^{\TT}  \log z \cdot \frac{(\log T)^{\frac 15+\frac{1}{25}}}{\sqrt{\log \log z}}  K_T(t)^{2} dt\Big). 
\]
The lower bound part of the estimate \eqref{eqn: mu} for $\mu$ shows that the error term above may be bounded by (distinguish the cases $\log T <\sqrt{\log \log z}$ and $\log \log z \gg \log T \ge \sqrt{\log \log z}$) 
\[ 
\ll \frac{1}{(\log T)^{\frac 15}} \int_{-\TT}^{\TT} \mu \cdot  K_T(t)^{2}  dt \ll \frac{T \mu}{(\log T)^{\frac 15}}.
\]
The proof is completed by assuming  Proposition~\ref{prop: concentration}, which together with \eqref{3.21} implies that with probability at least $1 - \exp(- \frac 13(\log T)^{\frac 1{100}})$ we have
\begin{align*}
\int_{-\TT}^{\TT} |F_z(1/2+it)|^{2}K_T(t)^{2}\mathbbm{1}_{\mathcal{H}(t)} dt &= \mu \int_{-\TT}^{\TT} K_T(t)^2 dt  + O\Big( T \mu\exp(-\tfrac 13 (\log T)^{\frac 1{100}}) \Big) 
\\
&= 4\pi \mu T + O\Big( \frac{T\mu}{(\log T)^{100}}\Big).
\end{align*}
\end{proof}

We now begin the proof of Proposition~\ref{prop: concentration}, which will take up the rest of this section.

\begin{proof}[Proof of Proposition~\ref{prop: concentration}] Expanding out the square and using the definition of $\mu$ (recall Proposition~\ref{prop: mu}), the left hand side of \eqref{eqn: AJHdisp} equals
\begin{equation}\label{eqn: double}
  \int_{-\TT}^{\TT}\int_{-\TT}^{\TT}  \E[|F_z(\tfrac 12+it)|^{2} |F_z(\tfrac 12 +iu)|^{2} \mathbbm{1}_{\mathcal{H}(t)}\mathbbm{1}_{\mathcal{H}(u)}]  K_{T}(t)^{2}K_{T}(u)^{2} dt du 
  - \Big( \mu \int_{-\TT}^{\TT} K_T(t)^2 dt \Big)^2.  
\end{equation}
We factor the expectation above as 
\[
\E[ |F_z(\tfrac 12+it)|^{2} |F_z(\tfrac 12+iu)|^{2} \mathbbm{1}_{\mathcal{H}(t)}\mathbbm{1}_{\mathcal{H}(u)}] = P_1(t, u) \cdot P_2(t, u), 
\]
where
\begin{equation}\label{eqn: P_1}
  P_1(t, u) := \E[ |F_{z^{e^{-\tau}}}(\tfrac 12+it)|^{2} |F_{z^{e^{-\tau}}}(\tfrac 12+iu)|^{2} ]  ,
\end{equation}
and (since $\mathcal{H}(t), \mathcal{H}(u)$ only depend on the $f(p)$ with $p > z^{e^{-\tau}}$)
\[
P_2(t, u): = \E[\mathbbm{1}_{\mathcal{H}(t)} |{\mathcal F}_0(\tfrac 12+it)|^2 \mathbbm{1}_{\mathcal{H}(u)} |{\mathcal F}_0(\tfrac 12+iu)|^2]. 
\]
Note that 
$$
\mu = \E[ |F_{z^{e^{-\tau}}}(\tfrac 12+it)|^{2}] \E[\mathbbm{1}_{\mathcal{H}(t)} |{\mathcal F}_0(\tfrac 12+it)|^2], 
$$
and so if it were the case that $P_1(t, u)$ and $P_2(t, u)$ each simply factored into the piece corresponding to $t$ and the piece corresponding to $u$, then \eqref{eqn: AJHdisp} would be identically zero.

The next lemma takes a step towards such a de-correlation when $|t-u|$ is large.  

\begin{lemma}\label{lem: slice} Suppose that $C\exp(-\tfrac 12 (\log T)^{\frac 1{200}}) \le |t-u|\le 2\TT$, for a suitable large constant $C$. Then 
\begin{equation}\label{eqn: p2}
   P_2(t,u) =  \Big(1 + O\Big(\frac{\exp(-\tfrac 12 (\log T)^{\frac 1{200}})}{|t-u| }\Big)\Big) \E[ \mathbbm{1}_{\mathcal{H}(t)} |{\mathcal F}_0(\tfrac 12+it)|^2 ]\E[ \mathbbm{1}_{\mathcal{H}(u)} |{\mathcal F}_0(\tfrac 12+iu)|^2].  
   \end{equation}
A similar but cruder decorrelation estimate for $P_1(t, u)$ is that 
\begin{equation}\label{eqn: p1}
    P_1(t, u)\ll \Big((\log (2+ |t-u|))^4 + \frac{1}{|t-u|^{2}}\Big) \E[|F_{z^{e^{-\tau}}}(\tfrac 12+it)|^{2} ] \E[|F_{z^{e^{-\tau}}}(\tfrac 12+iu)|^{2} ] .
\end{equation}
\end{lemma}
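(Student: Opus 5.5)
For \eqref{eqn: p1} I will compute directly. By independence over primes,
$$P_1(t,u)=\prod_{p\le w}\E\Big[\Big|1-\tfrac{f(p)}{p^{1/2+it}}\Big|^{-2}\Big|1-\tfrac{f(p)}{p^{1/2+iu}}\Big|^{-2}\Big], \qquad w=z^{e^{-\tau}}.$$
Expanding each factor in its power series and using orthogonality, the $p$-factor is $1+\frac{2}{p}+\frac{2\cos((t-u)\log p)}{p}+O(p^{-2})$. Dividing by $\E|F_w(\tfrac12+it)|^2\,\E|F_w(\tfrac12+iu)|^2$, whose $p$-factor is $1+2/p+O(p^{-2})$, leaves the ratio
$$\exp\Big(\sum_{p\le w}\frac{2\cos((t-u)\log p)}{p}+O(1)\Big).$$
I will bound this sum by the standard estimate
$$\sum_{p\le w}\frac{\cos(\xi\log p)}{p}\le \log\min\Big\{\log w,\ \frac{1}{|\xi|}\Big\}+O(1)$$
for $|\xi|\le 1$, and by $\log\log(2+|\xi|)+O(1)$ for $|\xi|\ge1$. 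The latter uses the prime number theorem together with the zeta zero-free region, or Mertens-type bounds for $\log|\zeta(1+1/\log w+i\xi)|$, which is $\ll \log\log|\xi|$. Exponentiating gives $\ll (\log(2+|t-u|))^2+|t-u|^{-2}$, which is stronger than the stated $(\log)^4$ bound.

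For \eqref{eqn: p2} the idea is that for primes $p>z^{e^{-\tau}}=\exp(\exp((\log T)^{1/200}))$ and $|t-u|\ge C\exp(-\tfrac12(\log T)^{1/200})$, we have $|t-u|\log p\ge C\exp(\tfrac12(\log T)^{1/200})$, which is large. Hence the pairs $(f(p)p^{-it},f(p)p^{-iu})$ behave as if independent on average over $p$. I will follow the two-dimensional approximate Girsanov argument of Harper (Lemma 4 of \cite{HarperLow} and its multi-point variants in \cite{Harperlargevalue}). First, tilt the measure on each $f(p)$ by the density $|1-f(p)p^{-1/2-it}|^{-2}|1-f(p)p^{-1/2-iu}|^{-2}$ divided by its expectation. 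This turns $P_2$ into
$$\E|\mathcal F_0(t)|^2|\mathcal F_0(u)|^2\cdot \widetilde{\P}\big(\mathcal H(t)\cap\mathcal H(u)\big).$$
The normalising expectation factorises, up to the factor
$$\exp\Big(\sum_{p>z^{e^{-\tau}}}\frac{2\cos((t-u)\log p)}{p}+\cdots\Big)=1+O\Big(\frac{1}{|t-u|\log z^{e^{-\tau}}}\Big)$$
by partial summation with the PNT error term. Here $1/\log z^{e^{-\tau}}=\exp(-(\log T)^{1/200})$, which fits comfortably inside the claimed error.

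Second, under the tilted measure, the increments of $(\log|\mathcal F_j(t)|,\log|\mathcal F_j(u)|)$ over the blocks $(z^{e^{-j-1}},z^{e^{-j}}]$ have a covariance matrix that is diagonal up to off-diagonal entries of size $\sum_p\cos((t-u)\log p)/p$. The means of these increments are the same shifts as in the single-point tilts, again up to the same cosine sums. I will then compare the joint law of the two walks with that of two independent walks, each distributed as in the single-point tilt. The comparison error in the probability of the barrier event should be relatively $O\big(\exp(-\tfrac12(\log T)^{1/200})/|t-u|\big)$. This gives
$$\widetilde{\P}\big(\mathcal H(t)\cap\mathcal H(u)\big)=(1+\text{error})\,\widetilde{\P}(\mathcal H(t))\,\widetilde{\P}(\mathcal H(u)),$$
and \eqref{eqn: p2} follows.

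The main obstacle is this last comparison, which requires a multiplicative rather than additive error. I would first try a coupling or total-variation argument at the level of the product measure. The likelihood ratio between the tilted law of $(f(p))_p$ and the product of the two marginal tilts is $\prod_p(1+\epsilon_p)$, where $\epsilon_p$ has mean zero and $\sum_p \E\epsilon_p^2\ll\sum_p p^{-2}+\big|\sum_p \cos((t-u)\log p)/p\big|^2$. In fact the cross-terms come with the oscillating phase $p^{i(t-u)}$, so summing them over primes gives the $1/(|t-u|\log z^{e^{-\tau}})$ saving. I would then bound the expectation of this ratio on the barrier event via Cauchy–Schwarz against the barrier probability, which is bounded below by Proposition~\ref{prop: mu}. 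The constant $C$ and the cross-term bound keep this uniform.
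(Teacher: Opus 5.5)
Your proof of \eqref{eqn: p1} is correct. Routing the prime sum through $\log|\zeta(1+1/\log w+i\xi)|$ even gives $(\log(2+|t-u|))^2$ in place of the paper's fourth power, which comes from bounding the primes up to $e^{A(\log|t-u|)^2}$ trivially. Your normalising-constant computation for \eqref{eqn: p2} is also fine.

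The gap is the step you flag yourself: the multiplicative comparison $\widetilde{\P}(\mathcal H(t)\cap\mathcal H(u))=(1+O(\eta))\,\widetilde{\P}_t(\mathcal H(t))\,\widetilde{\P}_u(\mathcal H(u))$. Here $\eta=\exp(-\tfrac12(\log T)^{1/200})/|t-u|$ and $\widetilde{\P}_t,\widetilde{\P}_u$ are the single-point tilts. The argument you sketch for it does not work, for two reasons.

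First, there is no likelihood ratio of the form $\prod_p(1+\epsilon_p)$ with small mean-zero $\epsilon_p$. For each individual prime, $f(p)p^{-iu}$ is a fixed rotation of $f(p)p^{-it}$. So the pair of single-prime contributions to the walks at $t$ and $u$ is a function of one angle, supported on a curve in $\R^2$. No per-prime density ratio against an independent (product) law exists. Decorrelation is only a block-level effect: the phases $p^{i(t-u)}$ equidistribute over the many primes in $(z^{e^{-j-1}},z^{e^{-j}}]$. You therefore first need a bivariate Gaussian approximation of the block increments with a \emph{relative}, not additive, error.

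Second, even granting a likelihood ratio $L$ at block level, Cauchy--Schwarz gives $|\widetilde{\E}[(L-1)\mathbbm 1_A]|\le\|L-1\|_2\,\widetilde{\P}(A)^{1/2}$. That is a relative error $\|L-1\|_2\,\widetilde{\P}(A)^{-1/2}$, not $\|L-1\|_2$. By Proposition~\ref{prop: mu}, the barrier probability can be as small as $\asymp(\log T)^2/\log\log z$. Since $T$ may tend to infinity arbitrarily slowly compared with $z$, the loss $\sqrt{\log\log z}/\log T$ is not absorbed by the available saving $\exp(-\tfrac12(\log T)^{1/200})$. The lemma, however, is needed uniformly for large $T\le(\log z)^{10}$.

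The missing ingredient is exactly the second part of Lemma 7 of \cite{HarperLow}, which the paper imports rather than reproves. That lemma performs the two-point analysis of the block increments. It outputs the product of the two single-point barrier expectations with multiplier $1+O(1/(|t-u|\sqrt{\log x_1}))$, which for $x_j=z^{e^{-\tau+j}}$ is $1+O(\eta)$.

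The paper's proof then consists of checking hypotheses:
\begin{itemize}
\item translation invariance, to reduce to the pair $(t-u,0)$ with $\sigma=0$;
\item the condition $z^{e^{-\tau+1}}\ge e^{C/|t-u|^2}$, which is where the lower bound on $|t-u|$ is used;
\item removal of Harper's restriction $|t|\le1$, which is only needed for $\sum_{x_j^{1/e}<p\le x_j}\cos(|t-u|\log p)/p\ll 1/(|t-u|\log x_j)$; this is handled by a strong form of the prime number theorem for primes $>z^{e^{-\tau}}$ and $|t-u|\le 2\TT$;
\item replacing the means $\sum(1+\cos(|t-u|\log p))/p$ by $\sum 1/p$, so the two walks decouple completely.
\end{itemize}

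A self-contained proof would have to supply a local Gaussian comparison for the block increments together with an estimate on the barrier event that is genuinely multiplicative. That is the real content of Harper's lemma, and your sketch does not provide it.
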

\begin{proof}[Sketch proof of Lemma~\ref{lem: slice}]
The proof of the first statement 
 follows from the second part of \cite[Lemma 7]{HarperLow}, and a slicing argument. We sketch the small modifications and changes needed, and verify that the conditions required there hold.  In the notation of \cite[Lemma 7]{HarperLow}, we take there $\sigma = 0$, and $t$ there is our $t-u$ (using translation invariance in law).  The parameters $x_j$ there correspond to $z^{e^{-\tau + j}}$, and the condition $z^{e^{-\tau + 1}} \geq e^{C/|t-u|^2}$ (needed at the end of the proof, to ensure that $|t-u|\sqrt{\log (z^{e^{-\tau}})}$ is large) is indeed satisfied under our hypotheses on $|t-u|$. In \cite[Lemma 7]{HarperLow}
 there is a condition $|t|\le 1$, which in our case says that we need $|t-u|\le 1$.  But this assumption is only needed for the cancellation in certain prime number sums (see \cite[the end of page 62, published version]{HarperLow}); namely,
\begin{equation}\label{eqn: primesum}
 \sum_{x_j^{1/e} < p \le x_j} \frac{\cos(|t-u| \log p)}{p} \ll \frac{1}{|t-u|\log x_j} ,  
\end{equation}
while in our case, since we are working with primes $p > z^{e^{-\tau}} = \exp(\exp((\log T)^{\frac 1{200}} +O(1) ))$ and $|t-u| \leq 2T (\log T)^{100}$, the relevant prime number sum estimate still holds by using a strong form of the prime number theorem (see e.g. \cite[Number Theory Result 2]{Harperlargevalue}).
The multiplier in the asymptotic formula of \cite[Lemma 7]{HarperLow} is stated as $1+O(1/\sqrt{C})$, but in fact this is $1+O(1/(|t|\sqrt{\log x_1}))$, which in our case is 
$1+O(\exp(-\frac 12 (\log T)^{\frac 1{200}})/{|t-u|})$. Finally, we remark that at the end of the proof there, the same calculations that replace the covariance by 0 also allow the means of $N_{j}^{1}, N_{j}^{2}$ to be replaced by $\sum_{x_j^{1/e} < p \le x_j} \frac{1}{p}$ (rather than $\sum_{x_j^{1/e} < p \le x_j} \frac{1 + \cos(|t-u| \log p)}{p}$), completely decoupling them and removing all dependence on $t,u$.

For the second part, we apply e.g. \cite[display (6)]{HarperLow} and conclude that 
\[
P_1(t, u)\ll \exp\Big(\sum_{p\le z^{e^{-\tau}}}\frac{2\cos((t-u)\log p)}{p}\Big) \E[ |F_{z^{e^{-\tau}}}(1/2+it)|^{2} ] \E[ |F_{z^{e^{-\tau}}}(1/2+iu)|^{2}  ].
 \]
The prime number sum above is estimated by splitting into small primes and big primes. Let us call the threshold $p_0$.
In the case that $|t-u|\le 1$, choose $p_0 = e^{1/|t-u|}$. When $p<p_0$, we bound $\cos((t-u)\log p)$ trivially by $1$ and use Mertens' estimate to get the bound 
$\sum_{p\le e^{1/|t-u|}}\frac{2\cos((t-u)\log p)}{p} \leq 2\log \frac{1}{|t-u|} + O(1)$. When $p>p_0$, we use the prime number theorem (to get a bound like \eqref{eqn: primesum}) and conclude that the contribution is $\ll 1$.  For the case that $|t-u|>1$, we choose $p_0 = e^{A(\log |t-u|)^{2}}$ for a suitable large constant $A$. When $p\le p_0$, we again bound $\cos((t-u) \log p)$ trivially by 1 which leads to the bound $4\log \log |t-u| + O(1)$ for the prime number sum. When $p>p_0$, we apply the prime number theorem with classical error term $x\exp(-d\sqrt{\log x})$ (for some $d>0$) and derive that the contribution in this case is $\ll 1$ (again see \cite[Number Theory Result 2]{Harperlargevalue} for details). This concludes the proof.
\end{proof}

Split the double integral in \eqref{eqn: double} as $I_1 + I_2$, where $I_1$ is the integral over pairs $t$, $u$ with $|t-u| \ge C\exp(-\tfrac 12 (\log T)^{\frac 1{200}})$ (which is the typical case), and $I_2$ is the integral over nearby points $t$, $u$ with $|t-u| \le C\exp(-\tfrac 12 (\log T)^{\frac 1{200}})$.   We now use 
 the decorrelation estimate from Lemma~\ref{lem: slice} to evaluate $I_1$, showing that it cancels out the term $(\mu \int_{-\TT}^{\TT} K_T(t)^2 dt)^2$ appearing in 
 \eqref{eqn: double} up to a negligible error term.  Following that, we shall show that the contribution of $I_2$ is negligible, completing the proof of Proposition \ref{prop: concentration}.

 We begin by using \eqref{eqn: p2}, and first consider the effect of the error term there.  Using the crude bounds $K_T(t)^2$, $K_T(u)^2\ll 1$ together with \eqref{eqn: p1}, and writing $h = |t-u|$ we see that this contribution to $I_1$ is 
 \begin{align*} 
& \ll \mu^2 \TT \exp(-\tfrac 12 (\log T)^{\frac{1}{200}}) \int_{C\exp(-\frac 12(\log T)^{\frac 1{200}})}^{2\TT} \Big( \frac{(\log (2+h))^4}{h} +\frac{1}{h^3}\Big) dh 
 \\
 &\ll \mu^2 T (\log T)^{100}  \exp(\tfrac 12 (\log T)^{\frac{1}{200}}). 
 \end{align*}
 
 Now consider the main term for $P_2(t,u)$ from \eqref{eqn: p2}, which is (importantly using translation invariance, as noted in the Introduction) 
 $$ 
 \E[\mathbbm{1}_{\mathcal{H}(t)} |{\mathcal F}_0(\tfrac 12+it)|^2 ]\E[ \mathbbm{1}_{\mathcal{H}(u)} |{\mathcal F}_0(\tfrac 12+iu)|^2] = 
 \Big( \E[ \mathbbm{1}_{\mathcal{H}(0)} |{\mathcal F}_0(\tfrac 12)|^2 ]\Big)^2. 
 $$ 
  Its contribution to $I_1$ is 
  $$ 
   \Big( \E[ \mathbbm{1}_{\mathcal{H}(0)} |{\mathcal F}_0(\tfrac 12)|^2 ]\Big)^2 \int_{\substack{|t|, |u| \le \TT \\ |t-u|  \ge C\exp(-\frac 12 (\log T)^{\frac 1{200}})}}  P_1(t,u) K_T(t)^2 K_T(u)^2 dt du. 
   $$ 
 We now show that the condition on $|t-u|$ above may be dropped with negligible error.   First note that by a simple fourth moment computation (see e.g. \cite[Euler Product Result 1]{Harperlargevalue}, with $k=1$ and $\alpha_1 = 2$ and $\sigma = 0$):
\begin{equation}\label{eqn: small}
P_1(t, u) \leq \E[|F_{z^{e^{-\tau}}}(1/2)|^{4}] \asymp (\log z^{e^{-\tau}})^{4} \asymp \exp( 4 (\log T)^{\frac 1{200}}).   
\end{equation}
Therefore, recalling the definition of $\mu$ and using $K_T(u)^2 \ll 1$, the error induced by dropping the condition on $|t-u|$ is 
$$ 
\ll \mu^2 \exp(4(\log T)^{\frac 1{200}}) \int_{-\TT}^{\TT} K_T(t)^2 dt \ll \mu^2 T \exp(4(\log T)^{\frac 1{200}}). 
$$ 
Thus the contribution of $I_1$ is
\begin{equation} 
\label{3.28} 
 \Big( \E[ \mathbbm{1}_{\mathcal{H}(0)} |{\mathcal F}_0(\tfrac 12)|^2 ]\Big)^2 \int_{-\TT}^{\TT}\int_{-\TT}^{\TT} P_1(t,u) K_T(t)^2 K_T(u)^2 dt du +O(\mu^2 T\exp(4(\log T)^{\frac{1}{200}})). 
 \end{equation} 
 
 At this stage we invoke  Lemma~\ref{lem: concentration}; taking there $w=z^{e^{-\tau}}$ and $H=\TT$, we obtain
\[ 
\E\Big[\Big| \int_{-\TT}^{\TT}  |F_{z^{e^{-\tau}}}(1/2+it)|^{2}  K_T(t)^{2}dt  - \int_{-\TT}^{\TT} \sum_{\substack{n \geq 1 \\ p|n \implies p\le  z^{e^{-\tau}}}} \frac{K_T(t)^{2}}{n}  dt      \Big|^{2}\Big]
\ll T\exp(4 (\log T)^{\frac 1{200}}). 
\]
 Expanding out the left side (which is a variance) and rearranging, it follows that 
 $$
 \int_{-\TT}^{\TT}\int_{-\TT}^{\TT} P_1(t,u) K_T(t)^2 K_T(u)^2 dt du\! = \!\Big( \int_{-\TT}^{\TT} \E[|F_{z^{e^{-\tau}}}(\tfrac 12)|^2] K_T(t)^2 dt \Big)^2 + O(T\exp(4 (\log T)^{\frac 1{200}})). 
 $$ 
  Combining this with \eqref{3.28}, we conclude that 
  \begin{equation} 
  \label{3.29} 
  I_1 = \Big( \mu \int_{-\TT}^{\TT} K_T(t)^2 dt \Big)^2 + O(\mu^2 T \exp(4 (\log T)^{\frac 1{200}})).
  \end{equation}

\vspace{12pt}

We now turn to the problem of estimating $I_2$, where $t$ and $u$ are close together, and it is here that the barrier events $\mathcal{H}(t)$ and ${\mathcal H}(u)$  play a crucial role.  Our task is to bound
\begin{equation*}
  \int \int _{\substack{ |t|, |u| \le \TT \\ |t-u|\le C \exp(-\frac 12(\log T)^{\frac 1{200} })  } }   \E [\mathbbm{1}_{\mathcal{H}(t)} |F_z(\tfrac 12+it)|^{2} \mathbbm{1}_{\mathcal{H}(u)} |F_z(\tfrac 12+iu)|^{2}] K_T(t)^{2}dt K_T(u)^{2}du.        
\end{equation*}
Write $h=t-u$, and use translation invariance to bound the above by 
\begin{align*} 
&\int_{|h| \le C \exp(-\frac 12 (\log T)^{\frac 1{200} })     } \E[ \mathbbm{1}_{\mathcal{H}(0)} |F_z(\tfrac 12)|^{2} \mathbbm{1}_{\mathcal{H}(h)} |F_z(\tfrac 12+ih)|^{2}] dh \int_{-\TT}^{\TT} K_T(u)^2 K_T(u+h)^2 du \nonumber \\ 
&\ll T \int_{|h| \le C \exp(-\frac 12 (\log T)^{\frac 1{200} })     } \E[ \mathbbm{1}_{\mathcal{H}(0)} |F_z(\tfrac 12)|^{2} \mathbbm{1}_{\mathcal{H}(h)} |F_z(\tfrac 12+ih)|^{2}] dh.
\end{align*} 
Using the fourth moment estimate \eqref{eqn: small} to handle the small primes $p\le z^{e^{-\tau}}$, we obtain the further reduction 
\begin{equation} 
\label{3.30} 
I_2 \ll T \exp(4(\log T)^{\frac 1{200}})  \int_{|h| \le C \exp(-\frac 12 (\log T)^{\frac 1{200} })    } \E[ \mathbbm{1}_{\mathcal{H}(0)} |{\mathcal F}_0(\tfrac 12)|^2 \mathbbm{1}_{\mathcal{H}(h)} |{\mathcal F}_0(\tfrac 12+ih)|^2 ] dh. 
\end{equation}

We now focus on bounding the expectation in \eqref{3.30} for a given value of $h$.   Define $J$ to be the smallest non-negative integer with $z^{e^{-J}} \le \exp((C/|h|)^2)$, so that $J\le \tau$ in our range for $h$.  Define $J_0$ to be the smallest non-negative integer at most $\tau$ with $z^{e^{-J_0}} \le \exp(C/|h|)$, setting $J_0=\tau$ if no such integer exists.   Thus $0\le J \le J_0 \le \tau$.  We factor the Euler product ${\mathcal F}_0(\tfrac 12+it)$ into three pieces depending on whether $z^{e^{-\tau}} < p \le z^{e^{-J_0}}$, or $z^{e^{-J_0}}< p \le z^{e^{-J}}$, or $z^{e^{-J}} < p \le z$, and call the three corresponding Euler products ${\mathcal P}_1(\tfrac 12+it)$, ${\mathcal P}_2(\tfrac 12+it)$, and ${\mathcal P}_3(\tfrac 12+it)$.   

Next we examine the conditions ${\mathcal H}(0)$ and ${\mathcal H}(h)$, extracting from them constraints on the primes in these three ranges.  Recall that the conditions ${\mathcal H}(0)$ and ${\mathcal H}(h)$ are given by the upper barrier constraints in \eqref{eqn: AJH h(t)}, together with the lower barrier constraint in \eqref{eqn: g(t)}. The idea is that the primes $z^{e^{-J_0}} < p$ are large enough that their contributions to the products at 0 and at $h$ behave quite independently. So we will invoke ${\mathcal H}(0)$ to bound ${\mathcal P}_1(\tfrac 12)$, then all our remaining Euler product factors at 0 and at $h$ will be essentially uncorrelated. When $z^{e^{-J}} < p$ is even larger, we will also be able to show that the barrier conditions at 0 and at $h$ provide quite independent constraints, giving some further saving.

More precisely, on the lower range $z^{e^{-\tau}} < p \le z^{e^{-J_0}}$ we keep only the condition (arising from \eqref{eqn: AJH h(t)} with $j=J_0$ and $t=0$) 
\begin{equation} 
\label{3.31} 
|{\mathcal F}_{J_0}(\tfrac 12)| = |{\mathcal P}_1(\tfrac 12)| \le \sqrt{T} e^{\tau -J_0} \frac{\exp(-(\log T)^{\frac 1{100}})}{(\log \log z -J_0)^5}. 
\end{equation} 
In the upper range $z^{e^{-J}}< p \le z$, using the upper bound in \eqref{eqn: AJH h(t)} together with the lower bound in \eqref{eqn: g(t)}, we find that for all $0\le j\le J-1$  and with $t$ being $0$ or $h$ 
\begin{equation} 
 \label{3.32} 
(\sqrt{T} e^{\tau-j})^{-(B+1)} \le  \prod_{z^{e^{-J}}< p \le z^{e^{-j}}} \Big|1- \frac{f(p)}{p^{\frac 12+it}}\Big|^{-1}\le T^{(B+1)/2} \exp((B+1)(\tau-J) + (J-j)). 
\end{equation}
In the middle range $z^{e^{-J_0}}<p\le z^{e^{-J}}$ we ignore the constraints imposed by ${\mathcal H}(0)$ and ${\mathcal H}(h)$.  

We now bound the expected value of $|{\mathcal F}_0(\tfrac 12) {\mathcal F}_0(\tfrac 12+ih)|^2$ keeping only the constraints given by \eqref{3.31} and \eqref{3.32}.   Since these constraints are independent over the three ranges for primes, we may factor the expectation correspondingly into three parts.  For the contribution of $|{\mathcal P}_1(\tfrac 12){\mathcal P}_1(\tfrac 12+ih)|^2$, we use \eqref{3.31} to obtain that this expectation is 
\begin{align}
\label{3.33} 
&\ll T e^{2(\tau-J_0)} \frac{\exp(-2(\log T)^{\frac 1{100}})}{(\log \log z-J_0)^{10}} \E[|{\mathcal P}_1(\tfrac 12+ih)|^2] \nonumber \\
&=    T e^{2(\tau-J_0)} \frac{\exp(-2(\log T)^{\frac 1{100}})}{(\log \log z-J_0)^{10}} \prod_{z^{e^{-\tau}}<p \le z^{e^{-J_0}}} \Big(1-\frac 1p\Big)^{-1}  \nonumber\\
&\ll T e^{3(\tau-J_0)}  \frac{\exp(-2(\log T)^{\frac 1{100}})}{(\log \log z-J_0)^{10}}.
\end{align} 
We see that a multiplier $T$ has emerged here, but (thanks to our strengthened barrier ${\mathcal H}(0)$) accompanied by the saving factor $\frac{\exp(-2(\log T)^{\frac 1{100}})}{(\log \log z-J_0)^{10}}$.

For the middle range, using Lemma 6 from \cite{HarperLow} we find 
\begin{equation} 
\label{3.34} 
\E[|{\mathcal P}_2(\tfrac 12) {\mathcal P}_2(\tfrac 12+ih)|^2] \ll 
\exp\Big( \sum_{z^{e^{-J_0}} < p \le z^{e^{-J}}} \frac{2+2\cos(h\log p)}{p} \Big) \ll \exp( 2 (J_0-J)), 
\end{equation} 
upon applying the prime number theorem.  Finally for the large primes the expected value of $|{\mathcal P}_3(\tfrac 12){\mathcal P}_3(\tfrac 12 +ih)|^2$ subject to the constraints in \eqref{3.32} is (upon using Proposition 7 from \cite{HarperLow}) 
\begin{equation} 
\label{3.35} 
\ll \prod_{z^{e^{-J}}<p\le z} \Big(1-\frac 1p\Big)^{-2} \Big(\frac{(\log T + (\tau - J))}{1+\sqrt{J}}\Big)^2 
\ll e^{2J} \Big( \frac{(\log T + (\tau-J))^2}{1+J} \Big). 
\end{equation} 
Putting \eqref{3.33}, \eqref{3.34} and \eqref{3.35} together we conclude that 
\begin{equation} 
\label{3.36} 
\E[\mathbbm{1}_{\mathcal{H}(0)}\mathbbm{1}_{\mathcal{H}(h)}|{\mathcal F}_0(\tfrac 12){\mathcal F}_0(\tfrac 12+ih)|^2] 
\ll T e^{3\tau - J_0}  \frac{\exp(-2(\log T)^{\frac 1{100}})}{(\log \log z-J_0)^{10}} \Big( \frac{(\log T + (\tau-J))^2}{1+J} \Big).
\end{equation} 

We simplify the bound in \eqref{3.36} by distinguishing the cases when $|h| \le (\log z)^{-\frac 14}$ and when $(\log z)^{-\frac 14} \le |h| \le C\exp(-\frac 12(\log T)^{\frac{1}{200}})$.  In the first case note that $\log \log z -J_0 \gg \log \log z$, and we may replace the bound in \eqref{3.36} by 
\begin{align*}
&\ll T (\log z)^3 e^{-J_0} \frac{\exp(-2(\log T)^{\frac 1{100}})}{(\log \log z)^{10}} (\log T)^2 (\log \log z)^2 
\\
&\ll T (\log z)^3 \frac{\exp(-\frac 32(\log T)^{\frac 1{100}})}{(\log \log z)^{8}} \min \Big( 1, \frac{1}{|h| \log z}\Big).
\end{align*}
 In the second case, note that $\log \log z -J_0$ and $\log \log z -J$ are both $\asymp \log (1/|h|)$, and $J \gg \log \log z$.  This enables us to simplify the bound in \eqref{3.36} to 
 $$ 
 \ll T (\log z)^3 e^{-J_0} \frac{ \exp(-2(\log T)^{\frac 1{100}})}{(\log (1/|h|))^{10}}\frac{ (\log T)^2 (\log 1/|h|)^2 }{\log \log z} 
 \ll T \frac{(\log z)^2}{|h| \log \log z}  \frac{\exp(-\frac 32(\log T)^{\frac 1{100}})}{(\log 1/|h|)^{8}}.  
 $$ 
 To see the last estimate above, split into the cases $J_0 <\tau$ where $e^{-J_0} \asymp 1/(|h|\log z)$ and the case $J_0=\tau$ where $|h| \ge C\exp(-(\log T)^{\frac 1{200}})$.  
 Using these two bounds in \eqref{3.30}, we conclude that 
 \begin{align*}
 I_2\ll T^2 &\exp(-(\log T)^{\frac 1{100}}) \Big( \frac{(\log z)^3}{(\log \log z)^8}  \int_{|h|\le (\log z)^{-\frac 14}} \min\Big(1, \frac{1}{|h|\log z}\Big) dh \\
 &+   \frac{(\log z)^2}{\log \log z} \int_{(\log z)^{-\frac 14} < |h| \le C\exp(-\frac 12(\log T)^{\frac{1}{200}})} \frac{1}{|h| (\log (1/|h|))^8} dh \Big) \\
 &\ll T^2 \frac{(\log z)^2}{\log \log z} \exp(-(\log T)^{\frac{1}{100}}) \ll T^2 \mu^2  \exp(-(\log T)^{\frac{1}{100}}). 
 \end{align*}
Combining this with \eqref{3.29}, we have completed the proof of Proposition~\ref{prop: concentration}.  
\end{proof} 
 
 \section{Conditional Gaussian limiting distribution}\label{Sec: CLT}
 In this section, we show a conditional central limit theorem (conditioning on all the values $(f(p))_{p\le z}$) for the sum $\sum_{\substack{x\le n \le x+y \\ P(n)>z}  } f(n)$. More precisely, we will show that with high probability over all realizations of the $(f(p))_{p\le z}$, the conditional characteristic function (and therefore the conditional distribution) of $\sum_{\substack{x\le n \le x+y \\ P(n)>z}  } f(n)$ is close to complex Gaussian.
 
 Recall the decomposition 
 \[
 \sum_{\substack{x\le n \le x+y \\ P(n)>z}  } f(n)  =   \sum_{\substack{1<m\le x+y\\ p|m \implies p>z}} f(m)    \sum_{\substack{\frac{x}{m} \le n \le \frac{x+y}{m} \\ P(n)\le z}} f(n).   
  \]
 By using the orthogonality of $f(n)$, the conditional variance is 
 \begin{equation*}
     V_f(x, y) = \sum_{\substack{1<m\le x+y\\ p|m \implies p>z}} \Big| \sum_{\substack{\frac{x}{m} \le n \le \frac{x+y}{m} \\ P(n)\le z}} f(n) \Big|^{2}.
 \end{equation*}
The goal of this section is to show the following.
\begin{theorem}[Conditional CLT]\label{thm: conditionCLT}
     Let $f(n)$ be a Steinhaus random multiplicative function. Let $x$ be large and $x/(\log x)^{1/2}\le y =o(x)$, and $z=x^{1/\log \log \log x}$. Let $V_f(x, y)$ be defined as above.  Let $\widetilde{\P}$ denote the conditional probability, conditioning on all the values $(f(p))_{p\le z}$. Then for asymptotically almost all realizations of $(f(p))_{p\le z}$, we have as $x\to +\infty$,
\[ \widetilde{\P} \Big( \frac{1}{\sqrt{V_f(x, y)}} \sum_{\substack{x\le n \le x+y\\ P(n) > z}}f(n) \in R \Big) \rightarrow \P (Z \in R), \]
where $Z$ is a standard complex Gaussian random variable with mean $0$ and variance $1$ and $R$ is any nice region in the complex plane. 
 \end{theorem}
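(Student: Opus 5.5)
We apply, conditionally on $(f(p))_{p\le z}$, the martingale central limit theorem of Soundararajan and Xu~\cite{SoundXu} for weighted sums $\sum_m a(m) f(m)$ with fixed coefficients. Here $a(m) = \sum_{x/m\le n\le (x+y)/m,\, P(n)\le z} f(n)$ is supported on $z$-rough $m\in\CA$. Order the $m$ by their largest prime factor $P(m)$ and filter by the $\sigma$-algebras generated by $(f(p))_{p\le q}$. The sum then becomes a sum of martingale differences
\[ Z_q = f(q)\sum_{P(m)=q} a(m) f(m/q), \]
indexed by primes $q>z$.

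The criterion in \cite{SoundXu} requires two things. First, the normalized sum of conditional variances $\sum_q \E[|Z_q|^2\mid \mathcal F_{q^-}]/V_f(x,y)$ must tend to $1$ in probability. Second, a Lindeberg/fourth-moment condition must hold. Both reduce to showing
\[ \sum_{\substack{m_1m_2=m_3m_4,\ m_1\ne m_3,\ m_2\ne m_4\\ P(m_1)=P(m_3),\ P(m_2)=P(m_4)}} a(m_1)a(m_2)\overline{a(m_3)a(m_4)} = o(V_f(x,y)^2), \]
with high probability over $(f(p))_{p\le z}$, together with a diagonal bound $\sum_q(\sum_{P(m)=q}|a(m)|^2)^2=o(V_f^2)$. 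The diagonal bound should follow from $\max_m|a(m)|^2$ being small relative to $V_f$ and the fact that $q>z$ is huge.

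The denominator is controlled by Theorem~\ref{thm: concentration}: with probability $1-o(1)$ we have $V_f\gg y\min\{1,\log T/\sqrt{\log\log x}\}$. It therefore suffices to show that the expectation of the off-diagonal quantity above, restricted to suitable barrier events that hold with high probability, is $o\big((y\min\{1,\log T/\sqrt{\log\log x}\})^2\big)$, and then apply Markov's inequality.

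To analyse the off-diagonal term, write each $a(m_j)$ through a smoothed Perron formula as $\frac{1}{2\pi}\int F_z(\frac12+it)(x/m)^{1/2+it}W(t)\,dt$, with $W$ effectively supported on $|t|\le(\log x)^{100}$; first replace the sharp cutoff by a smooth one with negligible error in mean square. The off-diagonal sum becomes a quadruple integral of $\prod_j F_z(\frac12\pm it_j)W(t_j)$ against the Dirichlet series $G(t_1,\dots,t_4)$ over $z$-rough $m_j$. The key arithmetic step is to bound $G$ by parametrising $m_1=gr$, $m_3=gs$, $m_2=sh$, $m_4=rh$ as in Lemma~\ref{lem: concentration}. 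Since $r,s>1$ are coprime and $z$-rough, and the condition $P(m_1)=P(m_3)$ forces shared large primes, we expect $G$ to be small unless all $|t_i-t_j|\lesssim 1/\log z$. Away from the diagonal the saving should be something like $(\log z/\log x)^{c}$ times a decay factor in $|t_i-t_j|\log z$.

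After this reduction the main term is
\[ \iint_{t_1\approx t_2}|F_z(\tfrac12+it_1)|^2|F_z(\tfrac12+it_2)|^2|W(t_1)W(t_2)|^2\,dt_1\,dt_2 \]
times $(\log x/\log z)^{O(1)}$. Its raw expectation blows up, so we insert barriers $\mathcal G^*(t)$ and $\mathcal H^*(t)$, analogous to $\mathcal G,\mathcal H$, on the partial products at all scales for $|t|\le(\log x)^{100}$. These thresholds must grow with $|t|$ beyond $T$ so that a union bound of the kind in Proposition~\ref{prop3.3} still works, and the decay of $W$ absorbs this growth.

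Off the $\mathcal H^*$ events we use arguments like Proposition~\ref{prop: Hfail}. On them we bound $\E[\mathbbm 1_{\mathcal H^*(t_1)}\mathbbm 1_{\mathcal H^*(t_2)}|F_z(\frac12+it_1)F_z(\frac12+it_2)|^2]$ by the three-range splitting of \eqref{3.33}--\eqref{3.35}. This gives roughly $T(\log z)^2\exp(-c(\log T)^{1/100})\cdot(\log z)/(1+|h|\log z)$ times ballot factors.

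The main obstacle is this last balance. The losses $(\log x/\log z)^{O(1)}=(\log\log\log x)^{O(1)}$ coming from $G$ and from the mismatch between $z$ and $x$ must be beaten by the limited saving from $\mathcal H^*$, which is only $\exp(-(\log T)^{1/100})$ and $(\log\log z-j)^{-5}$ factors. If that saving proves too weak when $T$ is small, we will strengthen the barrier saving in terms of $\log\log\log x$, or use the room between $z$ and $x$ more carefully. Finally, a conditional Lindeberg check $\max_q|Z_q|^2=o(V_f)$ should follow from a crude fourth-moment bound.
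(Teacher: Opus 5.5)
You follow the paper's architecture: the conditional use of Theorem~\ref{thm: SX}, the all-equal-largest-prime condition via H\"older and Lemma~\ref{lem4.3}, the smoothed Perron formula, the bound on $G$, the reduction to $|F_z(\tfrac12+it_1)F_z(\tfrac12+it_2)|^2$ with $t_1\approx t_2$, and barriers $\mathcal G^*,\mathcal H^*$. But you stop at the decisive step, and barriers modelled on $\mathcal G,\mathcal H$ cannot complete it.

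\textbf{The barriers.} The losses here do not depend on $T$: there is $(\log x/\log z)^4=(\log\log\log x)^4$ from $G$, and a factor $\log((\log x)^{100}/T)\asymp\log\log x$ that the union bound for $\mathcal G^*$ picks up from summing $1/(T+|t|)$ over the mesh up to $|t|\le(\log x)^{100}$. Meanwhile $T$ may tend to infinity arbitrarily slowly, and $y^2/V^2$ may be as large as $\log\log x/(\log T)^2$. Hence:
\begin{itemize}
\item a slack $\exp((\log T)^{1/100})$ does not even make $\mathcal G^*$ a high-probability event;
\item a saving $\exp(-(\log T)^{1/100})$ on $\mathcal H^*$ cannot beat these losses;
\item the cost of a multi-scale $\mathcal H$-type barrier failing, as in Proposition~\ref{prop: Hfail}, is only a power of $\log T$ relative to $\mu$ (about $(\log T)^{-24/25}$ when $\log T\le\sqrt{\log\log z}$).
\end{itemize}
Your fallback of strengthening the barrier does not say how the failure cost would then be controlled. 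The missing idea is to tie all barrier parameters to $\log\log z$:
\begin{itemize}
\item start the products at $z^{e^{-\tau^*}}\le e^{(\log\log z)^3}$ and use thresholds $\sqrt{T+|t|}\,e^{\tau^*-j}(\log\log z)^5$ for $\mathcal G^*$, which fails with probability $O((\log\log z)^{-5})$ (Proposition~\ref{prop4.6});
\item let $\mathcal H^*$ add a \emph{single} condition, on the full product over $(z^{e^{-\tau^*}},z]$, with saving $(\log\log z)^{-50}$.
\end{itemize}
Since only the endpoint of the walk is further constrained, the ballot theorem gives the ``$\mathcal G^*$ but not $\mathcal H^*$'' part a relative saving $(\log\log\log z)^3/\log\log z$, independent of $T$ (Proposition~\ref{prop: no t}). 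This is exploited through the product $I_1I_2$ after discarding the closeness of $t_1,t_2$. On $\mathcal H^*(t_2)$ you pull out the pointwise bound $(T+|t_2|)e^{2\tau^*}(\log\log z)^{-100}$, handle the primes below $z^{e^{-\tau^*}}$ by a fourth moment, and use the width $2(\log\log z)^9/\log z$ of the near diagonal. This gives $y^2(\log\log z)^{-87}$, and no three-range decorrelation as in \eqref{3.33}--\eqref{3.35} is needed.

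\textbf{The off-diagonal region.} You also pass too quickly over the region where $G$ is supposed to save. Lemma~\ref{lem4.4} gives $G\ll(\log x/\log z)^4\min\{\gamma(t_1-t_3)\gamma(t_2-t_4),\gamma(t_1-t_4)+\gamma(t_2-t_3)\}$ with $\gamma(h)=\min(1,1/(|h|\log x))$. The prefactor is a loss, not a saving $(\log z/\log x)^c$, and the decay is only $1/|h|$. Suppose you bound $\E|F_z(\tfrac12+it_1)F_z(\tfrac12+it_3)|\le\log z$ by Cauchy--Schwarz. Then integrating $1/(|h|\log x)$ over $|h|>(\log\log z)^3/\log z$ leaves a contribution of order $y^2\log\log z\,(\log\log\log x)^3$, which is not even $O(y^2)$. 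The paper's Lemma~\ref{lem: R} instead uses the low-moment (better than square-root) bound $\E|F_z(\tfrac12+it_1)F_z(\tfrac12+it_3)|\ll(\log z)^{1/2}(|t_1-t_3|^{-1/2}+\log\log x)$ to gain $(\log\log z)^{-3/2}$ there. It then makes a second cut at $|t_1-t_4|>(\log\log z)^9/\log z$, where $G\ll(\log\log z)^{-8}$. Without some such input, the reduction to $t_1\approx t_2$ is not justified.
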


We state this a little loosely (e.g. we do not bother to specify what constitutes a ``nice region''), as noted above we will actually prove a quantitative statement about the proximity of probabilistic characteristic functions, which would imply a quantitative version of Theorem \ref{thm: conditionCLT}.   To establish this, we shall apply the following complex-valued version of the martingale central limit theorem \cite{McLeish}, established in \cite{SoundXu}.

\begin{theorem}[Special case of Theorem 3.1 of Soundararajan--Xu~\cite{SoundXu}]\label{thm: SX} \label{thm: a_n} Let $f$ denote a Steinhaus random multiplicative function, and let $a_m$ denote a sequence of complex numbers.
Put 
$$ V= \sum_{1< m\le M} |a_m|^2,
$$ 
and define the complex valued random variable 
$$ 
Z:= \frac{1}{\sqrt{V}} \sum_{1<m\le M} a_m f(m). 
$$ 
Suppose that for some $1 \ge \epsilon >0$ the following two conditions hold:

(1).  We have 
$$ 
\Big| \sum_{\substack{ 1<m_1, m_2, m_3, m_4 \le M \\ m_1 m_2= m_3 m_4 \\ m_1 \neq m_3, m_2\neq m_4 \\ P(m_1)=P(m_3) \\ P(m_2)=P(m_4)}} a_{m_1} a_{m_2} 
\overline{a_{m_3}a_{m_4}} \Big| \le \epsilon^2 V^2.
$$ 

(2).  We have 
$$ 
\Big| \sum_{\substack{ 1<m_1, m_2, m_3, m_4 \le M  \\ m_1 m_2= m_3 m_4  \\ P(m_1)=P(m_2) =P(m_3)=P(m_4)}} a_{m_1} a_{m_2} 
\overline{a_{m_3}a_{m_4}} \Big| \le \epsilon^4 V^2.
$$ 

Then for any real numbers $t_1$ and $t_2$ we have, with $t^2 =(t_1^2+ t_2^2)/2$,
$$ 
\E[ e^{it_1 \text{Re}(Z) + it_2 \text{Im}(Z)} ]= e^{-t^2/2} + O(e^{t^2} \epsilon). 
$$ 
\end{theorem}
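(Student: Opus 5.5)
The plan is to realize $Z$ as the terminal value of a complex martingale indexed by the primes, and then run a quantitative form of McLeish's argument. Order the primes $p_1<p_2<\dots\le M$ and let $\mathcal{F}_k$ be the $\sigma$-algebra generated by $f(p_1),\dots,f(p_k)$. Set
$$ Z_k:=\frac{1}{\sqrt V}\sum_{\substack{1<m\le M\\ P(m)=p_k}} a_m f(m), $$
so that $Z=\sum_k Z_k$. Each $m$ with $P(m)=p_k$ factors as $f(m)=f(p_k)^{a}f(m')$ with $a\ge1$ and $m'$ built from smaller primes. Since $f(p_k)$ is uniform on the circle and independent of $\mathcal{F}_{k-1}$, we get $\E[Z_k\mid\mathcal F_{k-1}]=0$. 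Hence $(Z_k)$ is a martingale difference sequence. By the same averaging over $f(p_k)$, also $\E[Z_k^2\mid \mathcal F_{k-1}]=0$, and in fact $\E[Z_kZ_l]=0$ and $\E[Z_k\overline{Z_l}]=0$ for $k\ne l$. Writing $W:=t_1\mathrm{Re}Z+t_2\mathrm{Im}Z=\mathrm{Re}(\bar\lambda Z)$ with $\lambda=t_1+it_2$, we have $W=\sum_k X_k$ where $X_k=\mathrm{Re}(\bar\lambda Z_k)$ are real martingale differences and $|\lambda|^2=2t^2$.

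I would then follow McLeish's device. Use the expansion $e^{ix}=(1+ix)\exp(-x^2/2+r(x))$ with $|r(x)|\ll|x|^3$, valid once $|x|$ is small. This gives
$$ e^{iW}=\prod_k(1+iX_k)\cdot\exp\Big(-\tfrac12\sum_k X_k^2+\sum_k r(X_k)\Big). $$
The product $\prod_k(1+iX_k)$ has expectation exactly $1$ by the martingale property. The task is therefore to show that $S:=\sum_k X_k^2$ is close to $t^2$ and that $\sum_k r(X_k)$ and $\max_k|X_k|$ are small, all with $O(\epsilon)$ losses. Since $X_k^2=\tfrac12|\lambda|^2|Z_k|^2+\tfrac12\mathrm{Re}(\bar\lambda^2Z_k^2)$, we get
$$ S=t^2\sum_k|Z_k|^2+\tfrac12\mathrm{Re}\Big(\bar\lambda^2\sum_kZ_k^2\Big). $$

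The two hypotheses control exactly these quantities. First, $\E\sum_k|Z_k|^2=1$ by orthogonality. Expanding $\E\big[(\sum_k|Z_k|^2-1)^2\big]$ yields sums over $m_1m_2=m_3m_4$ with $P(m_1)=P(m_3)$ and $P(m_2)=P(m_4)$. The terms with $m_1=m_3$ (which forces $m_2=m_4$) reproduce $V^2$ and cancel the $1$. The remaining terms are precisely those of condition (1), so
$$ \E\Big[\big(\textstyle\sum_k|Z_k|^2-1\big)^2\Big]\le\epsilon^2. $$
Second, $\E\big|\sum_kZ_k^2\big|^2=\sum_k\E|Z_k|^4$ by the orthogonality noted above, and this equals $V^{-2}$ times the sum in condition (2). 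Hence both $\E\big|\sum_k Z_k^2\big|^2$ and $\sum_k\E|Z_k|^4$ are $\le\epsilon^4$. This also bounds $\E\max_k|Z_k|^4$ and, by Cauchy--Schwarz, gives $\E\sum_k|X_k|^3\ll |\lambda|^3\epsilon^2$. Consequently $S=t^2+O_{L^2}(|\lambda|^2\epsilon)$, so the Gaussian factor $e^{-S/2}$ is $e^{-t^2/2}$ up to an error of size $\epsilon$ times a factor polynomial in $|\lambda|$.

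The main obstacle is making McLeish's argument quantitative without integrability problems. The product $\prod_k(1+iX_k)$ is not bounded, and the Taylor remainder is only controlled for small $X_k$. I would handle this by stopping. Let $\kappa$ be the first $k$ at which $\sum_{l\le k}X_l^2>2t^2+1$ or $|X_k|>1$; on the event that no such $k$ exists, take $\kappa=\infty$. Replace $X_k$ by $X_k\mathbbm 1_{k<\kappa}$, which still forms a martingale difference sequence. For the stopped sequence, $\big|\prod_k(1+iX_k)\big|^2\le e^{\sum X_k^2}\le e^{2t^2+1}$. The event that stopping occurs has probability $\ll\epsilon$ by Chebyshev, using the bounds above. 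Then
$$ \E\Big[\prod_k(1+iX_k)\big(e^{-S/2+\sum_k r(X_k)}-e^{-t^2/2}\big)\Big] $$
is estimated by Cauchy--Schwarz as $\ll e^{t^2}\epsilon$, which produces the stated error term $O(e^{t^2}\epsilon)$. Reconciling the stopped and unstopped variables costs $O(\epsilon)$ more, since $e^{iW}$ is bounded.
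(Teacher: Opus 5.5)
The paper does not prove this statement; it quotes it from Theorem 3.1 of Soundararajan--Xu, whose proof is a quantitative form of McLeish's martingale central limit theorem. Your route is therefore the expected one. Your preliminary computations are correct:
\begin{itemize}
\item the $Z_k$, grouped by largest prime factor, are martingale differences with $\E[Z_k^2\mid\mathcal F_{k-1}]=0$;
\item the off-diagonal part of $\E[(\sum_k|Z_k|^2)^2]$ is exactly $V^{-2}$ times the sum in (1);
\item $\E|\sum_k Z_k^2|^2=\sum_k\E|Z_k|^4$ is exactly $V^{-2}$ times the sum in (2).
\end{itemize}

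\textbf{The step that fails.} As written, $X_k\mathbbm{1}_{k<\kappa}$ is not a martingale difference sequence. The event $\{\kappa>k\}$ depends on $X_k$, so it is $\mathcal F_k$- but not $\mathcal F_{k-1}$-measurable. On $\{\kappa\ge k\}$ one has $\E[X_k\mathbbm{1}_{\kappa>k}\mid\mathcal F_{k-1}]=-\E[X_k\mathbbm{1}_{\kappa=k}\mid\mathcal F_{k-1}]$. This would vanish if $X_k$ were conditionally symmetric. In general it is not: $Z_k$ usually contains powers $f(p_k)^a$ with $a\ge 2$. For example, conditionally $Z_k\propto f(p_k)+f(p_k)^2$ makes $X_k$ a multiple of $\cos\theta+\cos 2\theta$, whose law lives on $[-9/8,2]$. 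So the identity $\E\prod_k(1+iY_k)=1$ is lost.

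\textbf{The fix.} Use McLeish's predictable stopping: set $Y_k=X_k\mathbbm{1}_{\kappa\ge k}$, where $\kappa$ is the first $k$ with $\sum_{l\le k}X_l^2>c$. This keeps the overshoot step, so you only get
\[
\Big|\prod_k(1+iY_k)\Big|^2\le e^{c}(1+X_\kappa^2)\le e^{c}\big(1+|\lambda|^2\max_k|Z_k|^2\big).
\]
You then need $\E\max_k|Z_k|^2\le\epsilon^2$, which follows from (2). This is exactly why McLeish's theorem requires an $L^2$ bound on $\max_k|X_k|$. The requirement $\max_k|X_k|\le 1$, needed for the Taylor remainder, should be imposed as part of the good event, not the stopping rule.

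\textbf{Two smaller points.}
\begin{itemize}
\item Since $|e^{-S'/2+R'}|=|\prod_k(1+iY_k)|^{-1}\le 1$, Cauchy--Schwarz loses a square root of the bad-event probability. You therefore need that probability to be $\ll\epsilon^2$, not merely $\ll\epsilon$. Chebyshev does give this, from $\|S-t^2\|_2\le 2t^2\epsilon$ and $\P(\max_k|X_k|>1)\le 4t^4\epsilon^4$. For the latter, assume $t^2\epsilon\le 1$; otherwise the claim is trivial.
\item With $c=2t^2+1$ you end with $e^{t^2}$ times a polynomial in $t$ times $\epsilon$. Taking $c=\tfrac32 t^2+1$ absorbs the polynomial and gives exactly $O(e^{t^2}\epsilon)$. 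The bad-event probability is still $\ll\epsilon^2$ with this choice.
\end{itemize}
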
 

Let $\CA:=\{1<m \le x+y: p|m \implies p>z\}$. 
We apply the above criteria to our setting with the following:
\[M=x+y, \quad a_m = \mathbbm{1}_{\CA}(m) \cdot \sum_{\substack{\frac{x}{m} \le n \le \frac{x+y}{m} \\ P(n)\le z}} f(n), \quad V= V_f(x, y). \]
Note that once we condition on the $(f(p))_{p\le z}$, the $a_m$ and $V$ become fixed complex numbers.

The goal is to show that with probability $1- O((\log T)^{-\frac 15}) -O((\log \log z)^{-\frac{1}{10}})$ over realizations of $(f(p))_{p\le z}$, the following 
conditions hold:
\begin{equation}\label{eqn: cross}
 \Big| \sum_{\substack{ m_1, m_2, m_3, m_4 \in \CA \\ m_1 m_2= m_3 m_4 \\ m_1 \neq m_3, m_2\neq m_4 \\ P(m_1)=P(m_3) \\ P(m_2)=P(m_4)}} \sum_{\substack{n_1, n_2, n_3, n_4 \\ \frac{x}{m_j} \le n_j \le \frac{x+y}{m_j} \\ P(n_j)\le z}} f(n_1)f(n_2)\overline{f(n_3) f(n_4)}  \Big| \ll \frac{V_f(x, y)^2}{(\log\log z)^{\frac 15}} ,
\end{equation}
and 
\begin{equation} \label{eqn: linden}
\Big| \sum_{\substack{ m_1, m_2, m_3, m_4 \in \CA \\ m_1 m_2= m_3 m_4  \\ P(m_1)=P(m_2) =P(m_3)=P(m_4)}} \sum_{\substack{n_1, n_2, n_3, n_4 \\ \frac{x}{m_j} \le n_j \le \frac{x+y}{m_j} \\ P(n_j)\le z}} f(n_1)f(n_2)\overline{f(n_3) f(n_4)}  \Big| \ll \frac{V_f(x, y)^2}{\sqrt{z}} .
\end{equation}
Verifying these conditions, especially the difficult \eqref{eqn: cross}, will occupy most of this section.   Before establishing them, we first show how \eqref{eqn: cross} and \eqref{eqn: linden} may be used with Theorem \ref{thm: SX} to obtain our main result Theorem \ref{thm: main}.  

\begin{proof}[Proof of Theorem~\ref{thm: main}, assuming \eqref{eqn: cross} and  \eqref{eqn: linden}]
Invoking Theorem \ref{thm: concentration}, with probability $1- O((\log T)^{-\frac 15})$ over realizations of $(f(p))_{p\le z}$ we have 
\begin{equation} 
\label{4.3} 
V_f(x, y) = \Big(1+O\Big(\frac{1}{(\log T)^{\frac 15}}\Big)\Big) V(x, y) \asymp y \min \Big( 1, \frac{\log (x/y)}{\sqrt{\log \log x}}\Big), 
\end{equation} 
where we set $V(x,y) = e^{-\gamma } \frac{y}{\log z} \cdot \mu(x,y)$. Note that $V(x,y)$ obeys the estimates claimed in Theorem~\ref{thm: main}.

Let us temporarily write $\tilde{\E}$ to denote expectation conditional on the values $(f(p))_{p\le z}$.  Further, write temporarily 
$$ 
Z = \frac{1}{\sqrt{V_f(x,y)}} \sum_{\substack{x\le n \le x+y \\ P(n)>z}  } f(n), \text{   and   }  {\widetilde Z} = \frac{1}{\sqrt{V(x,y)}} \sum_{\substack{x\le n \le x+y \\ P(n)>z}  } f(n).
$$
Then using \eqref{eqn: cross}, \eqref{eqn: linden} and Theorem \ref{thm: SX} with $\epsilon = O((\log\log z)^{-\frac{1}{10}})$, we see that with probability $1- O((\log T)^{-\frac 15}) -O((\log \log z)^{-\frac 1{10}})$ over $(f(p))_{p\le z}$ we get
$$ 
\tilde{\E}\Big[\exp(it_1 \text{Re} Z  + it_2 \text{Im}Z )\Big] = e^{-t^2/2} + O\Big(\frac{e^{t^2}}{(\log\log z)^{\frac 1{10}}}\Big) . 
$$
With the same high probability, we may also assume that \eqref{4.3} holds.  Note that \eqref{4.3} gives, 
\begin{align*}
\tilde{\E}[|Z-\tilde{Z}|] &= \Big| \frac{1}{\sqrt{V(x,y)}} - \frac{1}{\sqrt{V_f(x,y)}}\Big| \tilde{\E}\Big[ \Big| \sum_{\substack{x\le n \le x+y \\ P(n)>z}  } f(n)\Big|\Big] 
\\
&\ll \frac{1}{(\log T)^{\frac 15}} \frac{1}{\sqrt{V_f(x,y)}} \Big(\tilde{\E}\Big[ \Big| \sum_{\substack{x\le n \le x+y \\ P(n)>z}  } f(n)\Big|^2 \Big]\Big)^{\frac 12} 
\ll \frac{1}{(\log T)^{\frac 15}}, 
\end{align*} 
where the last steps follow by Cauchy's inequality and recalling that $V_f(x,y)$ is the conditional variance.  Therefore 
$$ 
\tilde{\E}[|\exp(it_1 \text{Re} Z+it_2 \text{Im} Z) - \exp(it_1 \text{Re }\tilde{Z} + it_2 \text{Im}\tilde{Z})|]  \ll \tilde{\E}[ (|t_1| + |t_2|) |Z-\tilde{Z}|] \ll \frac{|t_1|+|t_2|}{(\log T)^{\frac 15}}, 
$$ 
and we conclude that with probability  $1- O((\log T)^{-\frac 15}) -O((\log \log z)^{-\frac 1{10}})$ over $(f(p))_{p\le z}$ there holds 
$$ 
\tilde{\E}\Big[\exp(it_1 \text{Re} \tilde{Z}  + it_2 \text{Im}\tilde{Z} )\Big] = e^{-t^2/2} + O\Big(\frac{e^{t^2}}{(\log\log z)^{\frac 1{10}}} + \frac{|t_1|+|t_2|}{(\log T)^{\frac 15}}\Big) . 
$$
 
Finally, averaging over $(f(p))_{p\le z}$ as well (using the Tower Property of conditional expectation, and the fact that the conditional characteristic function is always bounded by 1) we find the characteristic function of $\frac{1}{\sqrt{V(x,y)}} \sum_{\substack{x\le n \le x+y \\ P(n)>z}  } f(n)$ ({\em without any conditioning}) is
$$ 
= e^{-t^2/2} + O\Big(\frac{e^{t^2}}{(\log\log z)^{\frac{1}{10}}} + \frac{1 + |t_1| + |t_2|}{\log^{1/5}T} \Big) . 
$$
As $T \asymp x/y \rightarrow \infty$, this is $= e^{-t^2/2} + o(1)$ (i.e. converging to the characteristic function $e^{-t^2/2}$ of the standard complex Gaussian), implying convergence in distribution.
\end{proof}

Thus it remains only to establish the estimates in \eqref{eqn: cross} and \eqref{eqn: linden} with suitably high probability.  We begin with a lemma which will quickly lead to a proof of \eqref{eqn: linden}.  

\begin{lemma} \label{lem4.3}  Suppose $w \ge 1$ and ${\mathcal I}$ is a set of integers in $[1,w]$.   Then, for any complex numbers $a(n)$ with $|a(n)|\le 1$,  
$$ 
\E\Big[ \Big| \sum_{n\in {\mathcal I}} a(n) f(n) \Big|^4 \Big] \ll w |{\mathcal I}| (\log 2w)^3. 
$$
\end{lemma}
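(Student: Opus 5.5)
We expand the fourth power and use the orthogonality of the Steinhaus function $f$. Since $f$ is completely multiplicative with $\E[f(n)\overline{f(m)}]=\mathbbm{1}_{n=m}$, we have
$$
\E\Big[\Big|\sum_{n\in\mathcal I} a(n)f(n)\Big|^4\Big]=\sum_{\substack{n_1,n_2,n_3,n_4\in\mathcal I\\ n_1n_2=n_3n_4}} a(n_1)a(n_2)\overline{a(n_3)a(n_4)}.
$$
Using $|a(n)|\le 1$, this is at most the number of quadruples $(n_1,n_2,n_3,n_4)\in\mathcal I^4$ with $n_1n_2=n_3n_4$.

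To count these quadruples, we use the same parametrization as in the proof of Lemma~\ref{lem: concentration}. Put $g=(n_1,n_3)$ and write $n_1=gr$, $n_3=gs$ with $(r,s)=1$. Then $n_2=sh$ and $n_4=rh$ for some integer $h$. We fix $n_1\in\mathcal I$; this is the only place where $|\mathcal I|$ enters, giving a factor $|\mathcal I|$. Given $n_1$, we choose a divisor $g\mid n_1$, which determines $r=n_1/g$. It remains to count pairs $(s,h)$ with $gs\le w$, $sh\le w$ and $rh\le w$, where all four numbers are required to be at most $w$ but otherwise not necessarily in $\mathcal I$. For fixed $s\le w/g$, the number of admissible $h$ is at most $w/\max(r,s)\le w/r$. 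Summing over $s\le w/g$ gives at most $(w/g)(w/r)=w^2/n_1$ choices.

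That bound is not good enough once summed over $n_1$, so we instead keep both constraints: the count is at most $\sum_{s\le w/g} w/\max(r,s)$. The terms with $s\le r$ contribute at most $(w/r)\min(r,w/g)\le w$. The terms with $s>r$ contribute at most $\sum_{s\le w} w/s\ll w\log 2w$. So for each pair $(n_1,g)$ there are $\ll w\log 2w$ completions. Summing over divisors $g\mid n_1$ and then over $n_1\in\mathcal I$ gives a total of
$$
\ll w\log(2w)\sum_{n_1\in\mathcal I} d(n_1).
$$

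The main obstacle is the divisor sum: we cannot bound $\sum_{n\in\mathcal I}d(n)$ pointwise by $|\mathcal I|$ times a power of $\log$, since $d(n)$ can be as large as $w^{o(1)}$. To avoid this, we rearrange the count so that the factor $|\mathcal I|$ is attached to an element whose divisors we do not need to sum over. Concretely, we fix $n_1\in\mathcal I$ and count triples $(n_2,n_3,n_4)$ via $n_3\mid n_1n_2$. For each $n_2\le w$, the number of valid $n_3$ is at most $d(n_1n_2)\le d(n_1)d(n_2)$, which still involves $d(n_1)$.

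A cleaner route is Cauchy--Schwarz in the form $\#\{n_1n_2=n_3n_4\}=\sum_k R(k)^2$, where $R(k)=\#\{(n_1,n_2)\in\mathcal I^2: n_1n_2=k\}$, together with the bounds $R(k)\le d(k)$ and $\sum_k R(k)=|\mathcal I|^2$. This, however, gives $|\mathcal I|^2$ rather than $w|\mathcal I|$. So instead I will bound
$$
\sum_k R(k)^2\le \sum_{n_1\in\mathcal I}\ \sum_{n_2\le w} d(n_1n_2)\le \sum_{n_1\in\mathcal I} d(n_1)\sum_{n_2\le w}d(n_2)\ll w\log w\sum_{n_1\in\mathcal I}d(n_1).
$$
To remove the remaining $d(n_1)$, I would try to symmetrize: in the parametrization, choose the roles of $r$ and $s$ so that the variable summed with divisor weight ranges over all integers up to $w$, not over $\mathcal I$. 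The aim is to make the sum over $(g,r)$ run over all pairs with $gr\le w$, weighted by $\mathbbm{1}_{\mathcal I}(gr)$, and to sum over $h$ and $s$ freely. This should yield $\ll |\mathcal I|\cdot w(\log 2w)^{3}$, with the extra logarithms coming from $\sum_{s}1/s$ and $\sum_h 1/h$-type sums. Making this bookkeeping work so that $|\mathcal I|$ appears exactly once, without a divisor-function weight, is the step I expect to require care.
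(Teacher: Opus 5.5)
Your proposal has a genuine gap: it never reaches the stated bound. What you actually prove is that the number of quadruples is $\ll w\log(2w)\sum_{n\in\mathcal I}d(n)$. As you note yourself, this is not $\ll w|\mathcal I|(\log 2w)^3$ in general. The closing ``symmetrization'' does not change this: summing over pairs $(g,r)$ with $gr\in\mathcal I$ is the same as summing over divisors of $n_1\in\mathcal I$, so the factor $\sum_{n\in\mathcal I}d(n)$ comes back.

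More fundamentally, no bookkeeping can rescue a scheme that keeps only $n_1$ in $\mathcal I$ and lets $n_2,n_3,n_4$ range over all of $[1,w]$. Take $\mathcal I=\{N\}$ with $w/2<N\le w$, where $N$ has many divisors $g\in[\sqrt N/2,2\sqrt N]$. By your own parametrization, each such $g$ admits $\gg w/\log\log w$ completions $(s,h)$ with $s,h\le \sqrt w/2$ and $(r,s)=1$. So the number of triples exceeds the target $w(\log 2w)^3$ by a factor that can be larger than any fixed power of $\log w$. Yet the true fourth moment here is at most $1$.

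The missing step is to keep \emph{two} of the variables in $\mathcal I$. You had this in hand when you dismissed the $R(k)$ route. Since $R(k)\le d(k)$,
\[
\sum_k R(k)^2\le \sum_k R(k)\,d(k)=\sum_{n_1,n_2\in\mathcal I}d(n_1n_2)\le\Bigl(\sum_{n\in\mathcal I}d(n)\Bigr)^2,
\]
using $d(n_1n_2)\le d(n_1)d(n_2)$. Cauchy--Schwarz now moves the divisor weight onto a sum over all integers up to $w$:
\[
\Bigl(\sum_{n\in\mathcal I}d(n)\Bigr)^2\le|\mathcal I|\sum_{n\le w}d(n)^2\ll w|\mathcal I|(\log 2w)^3.
\]
This is exactly the paper's proof.

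The square is what makes it work. Applied to a single factor, Cauchy--Schwarz only gives $\sum_{n\in\mathcal I}d(n)\ll (|\mathcal I|w)^{1/2}(\log 2w)^{3/2}$. Multiplied by the factor $w\log(2w)$ that freeing $n_2$ cost you, this stays within the target only when $|\mathcal I|\gg w/\log 2w$.
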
 
 \begin{proof}  Expanding out the fourth moment and using orthogonality, the desired quantity is
 $$ 
 \sum_{\substack{ n_1, n_2, n_3, n_4 \in {\mathcal I}\\ n_1 n_2= n_3 n_4}} a(n_1) a(n_2) \overline{a(n_3) a(n_4)} \le \sum_{n_1, n_2 \in {\mathcal I}} d(n_1 n_2) \le \Big(\sum_{n\in {\mathcal I}} d(n) \Big)^2, 
 $$ 
 where $d(n)$ denotes the usual divisor function, and the last estimate follows since $d(n_1 n_2) \le d(n_1) d(n_2)$.  Now Cauchy--Schwarz shows that the above is 
 $$ 
 \le |{\mathcal I}| \sum_{n\le w} d(n)^2 \ll w |{\mathcal I}| (\log 2w)^3, 
 $$
 completing our proof.
 \end{proof} 
 
 \begin{proof}[Proof of \eqref{eqn: linden}] Using the triangle inequality, the left side of \eqref{eqn: linden} may be bounded by 
 $$ 
\sum_{\substack{ m_1, m_2, m_3, m_4 \in \CA \\ m_1 m_2= m_3 m_4  \\ P(m_1)=P(m_2) =P(m_3)=P(m_4)}} \Big|\sum_{\substack{n_1, n_2, n_3, n_4 
\\ \frac{x}{m_j} \le n_j \le \frac{x+y}{m_j} \\ P(n_j)\le z}} f(n_1)f(n_2)\overline{f(n_3) f(n_4)}  \Big|.         
$$ 
Take the expectation of this quantity over all $(f(p))_{p\le z}$.  Using H{\" o}lder's inequality followed by Lemma \ref{lem4.3}, we may bound this expectation by 
\begin{align*}
&\le 
 \sum_{\substack{ m_1, m_2, m_3, m_4 \in \CA \\ m_1 m_2= m_3 m_4  \\ P(m_1)=P(m_2) =P(m_3)=P(m_4)}}  \prod_{j=1}^{4} 
 \Big( \E \Big[ \Big| \sum_{\substack{\frac x{m_j} \le n_j \le \frac{x+y}{m_j} \\ P(n_j) \le z} } f(n)\Big|^4 \Big]\Big)^{\frac 14} 
\\
&\ll  \sum_{\substack{ m_1, m_2, m_3, m_4 \in \CA \\ m_1 m_2= m_3 m_4  \\ P(m_1)=P(m_2) =P(m_3)=P(m_4)}}  \prod_{j=1}^{4} 
\Big( \frac{(x+y)}{m_j} \left(\frac{y}{m_j} + 1 \right) (\log (2(x+y)))^3 \Big)^{\frac 14}  \\ 
&\ll x^2 (\log x)^3  \sum_{\substack{ m_1, m_2, m_3, m_4 \in \CA \\ m_1 m_2= m_3 m_4  \\ P(m_1)=P(m_2) =P(m_3)=P(m_4)}}  \frac{1}{m_1 m_2}
\ll  x^2 (\log x)^3 \sum_{\substack{m_1, m_2 \in {\mathcal A} \\ P(m_1) = P(m_2)}} \frac{d(m_1 m_2)}{m_1 m_2}. 
\end{align*}
If we write $p=P(m_1)=P(m_2)$, so that $p > z$, then using $d(m_1m_2) \le d(m_1) d(m_2)$ we may bound the above by 
$$ 
\ll x^2 (\log x)^3 \sum_{p>z} \Big( \sum_{m \le (x+y)/p} \frac{d(mp)}{mp}\Big)^2 \ll x^2 (\log x)^3 \sum_{p> z} \frac{1}{p^2} (\log x)^4 \ll \frac{ x^2 (\log x)^7 }{z}. 
$$  
Since $y \geq x/(\log x)^{1/2}$ by assumption, our bound is $\ll y^2 (\log x)^{8}/{z}$. Markov's inequality allows us to deduce that, with probability at least $1- O(z^{-\frac 14})$ (over realizations of the $(f(p))_{p\le z}$), the left side of \eqref{eqn: linden} is  $\ll y^{2} (\log x)^{8}/{z^{3/4}}$.   In view of \eqref{4.3}, which holds with suitably high probability, we conclude that \eqref{eqn: linden} holds with the desired high probability.  
 \end{proof}

It remains to verify the more demanding condition \eqref{eqn: cross}. The argument is a bit lengthy, but breaks into a few distinct phases. We shall bound the left hand side of \eqref{eqn: cross} by a (smoothed) quadruple integral of our random Euler products. We will then show that (with high probability) the ranges of integration may be significantly truncated to a very near diagonal portion. Finally, a barrier analysis (similar as in section \ref{SEC: CONCEN}, but less delicate) will give an acceptable bound for this portion of the quadruple integral.

\noindent {\bf Phase One: Pass to contour integrals.}  Put 
\begin{equation} 
\label{4.4} 
\delta= \log (1 +y/x) \asymp y/x, \qquad \text{ and } \qquad \delta_1 = (\log x)^{-40}. 
\end{equation} 
For positive real numbers $u$, $v$ define two functions $a(u;v)$ and $b(u;v)$ by setting them both to be $0$ if $u<ve^{-\delta_1}$ or if $u> ve^{\delta}$.   In the range $ve^{-\delta_1} \le u \le v e^{\delta}$ define 
\begin{equation} 
\label{4.5} 
a(u;v) = \begin{cases} 
\delta_1^{-1} \log(ue^{\delta_1}/v) &\text{ if } ve^{-\delta_1} \le u \le v \\ 
1 & \text{ if } v \le u \le ve^{\delta-\delta_1}\\ 
\delta_1^{-1} \log (e^{\delta}v/u) &\text{ if } ve^{\delta-\delta_1} \le u\le ve^{\delta} .\\ 
\end{cases}
\end{equation} 
The function $b(u;v)$ is defined by setting 
\begin{equation} 
\label{4.6} 
b(u;v) = \begin{cases} 
1- a(u;v) &\text{ if } v\le u \le e^{\delta}v\\ 
-a(u;v) &\text{ if } ve^{-\delta_1} \le u\le v .\\
\end{cases}
\end{equation} 
Thus $a(u;v)+b(u;v)$ is the indicator function of the condition $u \in [v,ve^{\delta}]$;  the function $a(u;v)$ is a smoothed approximation to this indicator function, and $b(u;v)$ is the error incurred in the smoothing.  

Examining the left side of \eqref{eqn: cross}, we use the above notation to write the inner sum over $n_1$ as 
$$ 
\sum_{P(n_1) \le z} a(n_1;x/m_1) f(n_1) + \sum_{P(n_1) \le z} b(n_1; x/m_1) f(n_1).
$$ 
Similar expressions hold for the sums over $n_2$, $n_3$, $n_4$, taking care to replace $f$ by its conjugate in the cases of $n_3$ and $n_4$.   Thus the left side of \eqref{eqn: cross} may be bounded in terms of $16$ sums, depending on which combination of $a(n_j;x/m_j)$ or $b(n_j; x/m_j)$ arises.   Of these $16$ terms, we now show that only the term with $a(n_j;x/m_j)$ appearing for all $1\le j\le 4$ is significant, and the remaining $15$ terms may be bounded easily.   

Consider one of the $15$ terms where $b(n_j;x/m_j)$ appears at least once.  Suppose $b(n_1;x/m_1)$ appears, and for $j=2$, $3$, $4$ we are indifferent to whether $a(n_j;x/m_j)$ or $b(n_j;x/m_j)$ occurs, and denote by $c(n_j;x/m_j)$ either of these possibilities.  We now bound the expected value of the contribution of such terms to \eqref{eqn: cross}: namely, 
$$
\sum_{\substack{ m_1, m_2, m_3, m_4 \in \CA \\ m_1 m_2= m_3 m_4 \\ m_1 \neq m_3, m_2\neq m_4 \\ P(m_1)=P(m_3) \\ P(m_2)=P(m_4)}}  
\E \Big[ \Big| \sum_{\substack{ n_1, n_2, n_3, n_4 \\ P(n_j) \le z}}  b(n_1;x/m_1) f(n_1) c(n_2;x/m_2) f(n_2) \prod_{j=3}^{4} c(n_j; x/m_j) \overline{f(n_j)}\Big|\Big]. 
$$ 
By applications of H{\" o}lder's inequality followed by Lemma \ref{lem4.3} we may bound the inner expectation by 
\begin{align*}
&\Big( \E \Big[ \Big| \sum_{P(n_1)\le z} b(n_1;x/m_1) f(n_1)\Big|^4\Big]\Big)^{\frac 14} \prod_{j=2}^{4} \Big(\E \Big[ \Big| \sum_{P(n_j) \le z} c(n_j;x/m_j) f(n_j)\Big|^4 \Big] \Big)^{\frac 14} \\
&\ll \Big( \E \Big[ \Big| \sum_{P(n_1)\le z} b(n_1;x/m_1) f(n_1)\Big|^4\Big]\Big)^{\frac 14} \prod_{j=2}^{4} \Big( \frac{x^2}{m_j^2} (\log x)^3 \Big)^{\frac 14}
\end{align*} 
We sum this over $m_1$, $m_2$, $m_3$, $m_4$, keeping only the constraints that these variables are $\le x+y$ and satisfy $m_1 m_2 = m_3 m_4$.  Given $m_1$, $m_2$, there are 
at most $d(m_1 m_2) \le d(m_1) d(m_2)$ choices for $m_3$ and $m_4$.   Thus the total contribution to \eqref{eqn: cross} from this case is 
\begin{align*}
&\ll \sum_{m_1, m_2  \le x+y} \Big( \E \Big[ \Big| \sum_{P(n_1)\le z} b(n_1;x/m_1) f(n_1)\Big|^4\Big]\Big)^{\frac 14} x^{\frac 32} (\log x)^{\frac 94} \frac{d(m_1)d(m_2)}{\sqrt{m_1} m_2}  \\
&\ll x^{\frac 32} (\log x)^{\frac {17}{4}} \sum_{m_1 \le x+y} \frac{d(m_1)}{\sqrt{m_1}} \Big( \E \Big[ \Big| \sum_{P(n_1)\le z} b(n_1;x/m_1) f(n_1)\Big|^4\Big]\Big)^{\frac 14}.
\end{align*}
To estimate this sum, we distinguish two cases: (i) when $m_1 \le x/(\log x)^{40}$ and (ii) when $x/(\log x)^{40} < m_1 \le x+y$.   In the first case, note that (when $b(n_1;x/m_1)$ is non-zero) $n_1$ lies in two intervals near $x/m_1$ and $(x+y)/m_1$ of length $\asymp \delta_1 (x/m_1) = (\log x)^{-40}(x/m_1)$.   Therefore using Lemma \ref{lem4.3} 
$$
 \E \Big[ \Big| \sum_{P(n_1)\le z} b(n_1;x/m_1) f(n_1)\Big|^4\Big] \ll \frac{x}{m_1} \frac{x}{m_1(\log x)^{40} }(\log x)^3 \ll \frac{x^2}{m_1^2 (\log x)^{37}}. 
 $$ 
 Thus the contribution of this range of $m_1$ to the expectation is 
 $$ 
 \ll x^2 (\log x)^{-5}  \sum_{m_1 \le x/(\log x)^{40}} \frac{d(m_1)}{m_1} \ll x^2 (\log x)^{-3}.
 $$ 
 In the second case, note that the sum over $n_1$ again runs over integers in two intervals around $x/m_1$ and $(x+y)/m_1$ of length $\asymp \delta_1 x/m_1$, and now there are at most a bounded number of integers in these intervals.   Thus the expectation of the sum over $n_1$ is bounded.   Moreover, usually there are no integers $n_1$ so close to $x/m_1$ or $(x+y)/m_1$, and the sum is non-zero only if for some integer $k\le 2(\log x)^{40}$ one has $m_1$ lying in an interval of length $O(x(\log x)^{-40}/k)$ around $x/k$ or $(x+y)/k$.  Thus the contribution from the second case is 
 \begin{align*} 
& \ll x^{\frac 32} (\log x)^{\frac{17}{4}} \sum_{k\le 2(\log x)^{40}} \Big( \sum_{|m_1 -x/k| \ll x(\log x)^{-40}/k} \frac{d(m_1)}{\sqrt{m_1}} + \sum_{|m_1-(x+y)/k| \ll x(\log x)^{-40}/k} 
 \frac{d(m_1)}{\sqrt{m_1}} \Big) 
 \\
 &\ll  x^{\frac 32} (\log x)^{\frac{17}{4}} \sum_{k\le 2(\log x)^{40}} \frac{\sqrt{x}}{\sqrt{k}} (\log x)^{-39} \ll \frac{x^2}{(\log x)^{10}}. 
 \end{align*}
 We conclude that the expected value of the contribution of the $15$ remainder terms to \eqref{eqn: cross} is $\ll x^2/(\log x)^3 \ll y^2/(\log x)^2$, since $y \ge x/(\log x)^{\frac 12}$.  
 By Markov's inequality we conclude that with probability $1- O((\log x)^{-1})$ the contribution of these terms to \eqref{eqn: cross} is $\ll y^2/(\log x)$, which is more than satisfactory.  
 
 Thus our goal now is to understand 
 \begin{equation} 
 \label{4.7} 
 \sum_{\substack{ m_1, m_2, m_3, m_4 \in \CA \\ m_1 m_2= m_3 m_4 \\ m_1 \neq m_3, m_2\neq m_4 \\ P(m_1)=P(m_3) \\ P(m_2)=P(m_4)}}  
\sum_{\substack{ n_1, n_2, n_3, n_4 \\ P(n_j) \le z}}  a(n_1;x/m_1) f(n_1) a(n_2;x/m_2) f(n_2) a(n_3; x/m_3) \overline{f(n_3)} a(n_4;x/m_4) \overline{f(n_4)}. 
 \end{equation} 
 The function $a(u;v)$ may be expressed as a contour integral 
 $$ 
 a(u;v) = \frac{1}{2\pi i} \int_{(c)} \Big(\frac{v}{u}\Big)^s \Big( \frac{e^{\delta s}-1}{s}\Big) \Big( \frac{1-e^{-\delta_1 s}}{\delta_1 s} \Big) ds, 
 $$ 
 where the integral is taken over the line Re$(s)=c >0$.   We take the line of integration to be $c=\tfrac 12$, and writing $s=\tfrac 12+it$ express the above as 
 $$ 
 a(u;v) = \frac{1}{2\pi} \int_{-\infty}^{\infty} \Big(\frac vu\Big)^{\frac 12+it} W(t) dt, 
 $$ 
 where the kernel $W(t)=W(t;\delta,\delta_1)$ is defined by 
 \begin{equation} 
 \label{4.8} 
 W(t)= \Big(\frac{e^{\delta(\frac 12+it)} -1}{\frac 12+it}\Big) \Big(\frac{1-e^{-\delta_1 (\frac 12+it)}}{\delta_1 (\frac 12+it)}\Big).  
 \end{equation} 
 We may readily check that $W(t)$ satisfies the bound
 \begin{equation} 
 \label{4.9} 
 W(t) \ll \min\Big( \delta, \frac{1}{1+|t|}\Big) \min\Big( 1, \frac{1}{\delta_1 (1+|t|)}\Big) = \min \Big( \delta, \frac{1}{1+|t|}, \frac{1}{\delta_1 (1+|t|)^2}\Big). 
 \end{equation}
 With this notation, it follows that 
 $$
 \sum_{P(n_1)\le z} a(n_1;x/m_1) f(n_1) = \frac{1}{2\pi} \int_{-\infty}^{\infty} F_z(\tfrac 12+it_1) \Big(\frac x{m_1}\Big)^{\frac 12+it_1} W(t_1) dt_1.  
 $$
 An identical expression holds for the sum over $n_2$, while for $n_3$ and $n_4$ we have closely analogous expressions after taking into account complex conjugation: for instance, 
 $$ 
 \sum_{P(n_3) \le z} a(n_3;x/m_3) \overline{f(n_3) }= 
 \frac{1}{2\pi} \int_{-\infty}^{\infty} \overline{F_z(\tfrac 12+it_3)} \Big(\frac{x}{m_3}\Big)^{\frac 12-it_3} \overline{W(t_3)} dt_3. 
 $$ 
 
We use these expressions for the sums over $n_j$ in \eqref{4.7}, and then bring in the outer sums over the $m_j$, replacing the condition $m_j \in {\mathcal A}$ 
by just requiring $m_j >1$ and having prime factors in the range $(z,x+y]$ (since the terms with $m_j>x+y$ have $a(n_j;x/m_j) = 0$).  For ease of notation, define 
\begin{equation} 
\label{4.10} 
G(t_1, t_2, t_3, t_4) = \rsum_{m_1, m_2, m_3, m_4 > 1} \frac{1}{m_1^{\frac 12+it_1}} \frac{1}{m_2^{\frac 12+it_2}} \frac{1}{m_3^{\frac 12-it_3}}\frac{1}{m_4^{\frac 12-it_4}}, 
\end{equation} 
where the $\star$ expresses the constraints 
\begin{align} 
\label{4.11} 
p|m_j \implies p\in (z,x+y]; \ \ &m_1 m_2 =m_3m_4; \ \ m_1 \neq m_3, m_2 \neq m_4; \nonumber\\
&\ \ P(m_1) = P(m_3), P(m_2)=P(m_4). 
\end{align}
Gathering our argument so far, we conclude that the desired quantity in \eqref{4.7} may be bounded by 
\begin{equation} 
\label{4.12} 
\ll x^2 \int_{t_1, t_2, t_3, t_4 \in {\mathbb R}} \prod_{j=1}^{4} |F_z(\tfrac 12+it_j) W(t_j)| |G(t_1,t_2,t_3,t_4)| dt_1 dt_2 dt_3 dt_4. 
\end{equation} 

\vspace{12pt}
\noindent {\bf Phase Two: Truncate the integrals.}
Our next step is to show that in \eqref{4.12} we may restrict attention to the range when all $|t_j|$ are at most $(\log x)^{100}$.  To see this, we first record a simple bound on $G(t_1,t_2,t_3,t_4)$.  Using the triangle inequality, and keeping only the conditions that $m_1m_2 =m_3 m_4$ (so that given $m_1$, $m_2$ there are at most $d(m_1 m_2) \le d(m_1)d(m_2)$ choices for $m_3$ and $m_4$) and that all prime factors of $m_j$ are in $(z,x+y]$,  we see that 
\begin{align*}
|G(t_1, t_3, t_3, t_4)| &\le \sum_{\substack{m_1, m_2 \\ p|m_j \implies p\in (z,x+y]}} \frac{d(m_1 m_2)}{m_1 m_2} \le \Big( \sum_{p|m_1 \implies p\in (z,x+y]} \frac{d(m)}{m}\Big)^2 \\
&= \prod_{z < p \le x+y} \Big(1-\frac 1p\Big)^{-4} \ll \Big(\frac{\log x}{\log z} \Big)^4. 
\end{align*}
Further, note that 
$$ 
\E\Big[ \prod_{j=1}^{4} |F_z(\tfrac 12+it_j)|\Big] \le \frac 14 \sum_{j=1}^{4} \E[ |F_z(\tfrac 12+it_j)|^4] = \E[ |F_z(\tfrac 12)|^4] \ll (\log z)^4, 
$$
where the third estimate follows by translation invariance, and the fourth moment bound follows by a straightforward calculation (or see \cite[Euler Product Result 1]{Harperlargevalue}).
Thus the expected value of the terms where one of the $|t_j|$ exceeds $(\log x)^{100}$ is 
$$ 
\ll x^2 \Big(\frac{\log x}{\log z}\Big)^4 (\log z)^4 \Big( \int_{|t| \ge (\log x)^{100}} |W(t)| dt \Big) \Big( \int_{-\infty}^{\infty} |W(t)| dt \Big)^3 \ll 
\frac{x^2}{(\log x)^{50}}, 
$$ 
upon using \eqref{4.9}.   Thus by Markov's inequality, with probability $1-O((\log x)^{-1})$ the contribution of terms to \eqref{4.12} with one of the variables $|t_j| > (\log x)^{100}$ is $\le y^2/(\log x)^{40}$ which is satisfactory.  

Henceforth we restrict attention to the case where all $|t_j|$ are below $(\log x)^{100}$.   Here a key feature will be that $|G(t_1, t_2, t_3, t_4)|$ provides a saving unless the variables $t_j$ are all close to each other. For prime number theoretic reasons, we will need such a restriction on the maximum size of the $t_j$ to show the bound we want for $|G(t_1, t_2, t_3, t_4)|$. Indeed, it follows easily from the prime number theorem and partial summation (or consult  \cite[Number Theory Result 2]{Harperlargevalue}) that for any $z \le u \le x+y$, and $|t| \le 2(\log x)^{100}$ one has 
\begin{equation} 
\label{4.13} 
\Big| \sum_{z < p\le u} \frac{1}{p^{1+it}} \Big| \le \min \Big( \log \frac{\log u}{\log z} + O(1), O\Big( \frac{1}{|t| \log z} \Big) \Big) . 
\end{equation} 
With this estimate in hand, we proceed to bounding $|G(t_1,t_2,t_3,t_4)|$.  

\begin{lemma} \label{lem4.4}  For $t\in {\mathbb R}$ define (interpreting $\gamma(0)$ as $1$)
\begin{equation}\label{eqn: gamma}
    \gamma(t) := \min\Big( 1, \frac{1}{|t|\log x} \Big). 
\end{equation}
In the range $|t_j| \le (\log x)^{100}$ for all $1\le j\le 4$, we have 
\begin{equation} \label{eqn: G upper}
    G(t_1, t_2, t_3, t_4) \ll \Big( \frac{\log x}{\log z}\Big)^4   \min\{ \gamma(t_1-t_3)\gamma(t_2-t_4),  \gamma(t_1-t_4) +\gamma(t_2-t_3) \} .
\end{equation}
\end{lemma}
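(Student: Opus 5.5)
We will bound $G$ by parametrizing the solutions of $m_1m_2=m_3m_4$ and exploiting cancellation in prime sums via \eqref{4.13}. As in the proof of Lemma~\ref{lem: concentration}, write $g=(m_1,m_3)$, $m_1=gr$, $m_3=gs$ with $(r,s)=1$, so that $m_2=sh$, $m_4=rh$. The summand then becomes
$$
\frac{1}{g^{1+i(t_1-t_3)}\, h^{1+i(t_2-t_4)}\, r^{1+i(t_1-t_4)}\, s^{1+i(t_2-t_3)}}.
$$
Here $m_1\neq m_3$ forces $rs>1$, and the condition $m_2\neq m_4$ is then automatic. All variables have prime factors in $(z,x+y]$.

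\textbf{The bound involving $\gamma(t_1-t_3)\gamma(t_2-t_4)$.} Ignoring for a moment the conditions on $P(m_j)$ and the implicit size restrictions, the sum would factor as a product of four Euler products. The $g$-factor is $\prod_{z<p\le x+y}(1-p^{-1-i(t_1-t_3)})^{-1}$, whose logarithm is $\sum_p p^{-1-i(t_1-t_3)}+O(1/z)$. By \eqref{4.13}, its modulus is $\ll \exp(\min(\log\frac{\log x}{\log z}, O(1/(|t_1-t_3|\log z))))$.

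This does not directly give the factor $\gamma(t_1-t_3)$, which compares against $\log x$ rather than $\log z$. So instead one should write the $g$-sum over all integers with prime factors in $(z,x+y]$ as $1+\sum_{g>1}$. The ``$g=1$'' term is only excluded if it is forced, so the approach has to be more careful. The natural source of the saving $\gamma(t_1-t_3)$ is the constraint $P(m_1)=P(m_3)$.

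\textbf{Using the largest-prime constraint.} If $g>1$ and $P(g)\ge \max(P(r),P(s))$, the equality $P(m_1)=P(m_3)$ is automatic. If instead $P(r)\neq P(s)$ with $g$ smaller, the equality fails, since $(r,s)=1$. Hence $P(m_1)=P(m_3)$ forces $g>1$ with $P(g)=:p\ge P(r),P(s)$. Similarly $P(m_2)=P(m_4)$ forces $h>1$ with $P(h)=:q\ge P(r),P(s)$.

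So we sum over the largest prime $p$ of $g$, writing $g=p g'$ with $P(g')\le p$. We use $|\sum_{z<p\le u}p^{-1-it}|$ together with partial summation in $p$, smoothing the inner sums by bounding them by Euler products $\ll (\log p/\log z)^{O(1)}$. The $p$-sum with the oscillating factor $p^{-i(t_1-t_3)}$ then yields, by summation by parts and \eqref{4.13}, a bound of the shape $\frac{\log x}{\log z}\min(1,\frac{1}{|t_1-t_3|\log x})$ times the size of the remaining sums. The key point is that the weight attached to the top prime grows like $(\log p)^{O(1)}$, so integration by parts against $p^{-it}$ gains $1/(|t|\log x)$ relative to the trivial bound $(\log x/\log z)^{O(1)}$. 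The same argument applied to $h$ with $t_2-t_4$ gives the product $\gamma(t_1-t_3)\gamma(t_2-t_4)$. All other variables are bounded trivially by $\prod_{z<p\le x+y}(1-1/p)^{-1}\ll \log x/\log z$, giving the total power $(\log x/\log z)^4$.

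\textbf{The bound $\gamma(t_1-t_4)+\gamma(t_2-t_3)$.} Since $rs>1$, at least one of $r,s$ exceeds $1$. If $r>1$, we treat the largest prime of $r$ (or simply use the fact that $r$ runs over a nonempty $z$-rough set) and apply the same partial summation in its top prime against $p^{-i(t_1-t_4)}$, gaining $\gamma(t_1-t_4)$. The case $s>1$ similarly gains $\gamma(t_2-t_3)$. The constraint that $r$'s top prime is $\le p,q$ complicates the factorization, but it can be absorbed by bounding the nested sums trivially after extracting the oscillation.

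\textbf{Main obstacle.} The main obstacle is making this ``top prime extraction plus partial summation'' rigorous with nested ordering constraints, while losing only $(\log x/\log z)^{O(1)}$. The difficulty is that the remaining sums depend on $p$ through the conditions $P(\cdot)\le p$. I would handle this by writing the inner smooth sum as a function $S(p)$, bounded together with its variation by $(\log p/\log z)^{3}$, and then using Stieltjes integration against $\sum_{z<p\le u}p^{-1-it}$, whose size is controlled uniformly by \eqref{4.13}.
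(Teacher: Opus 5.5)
Your parametrization, and your observation that $P(m_1)=P(m_3)$ and $P(m_2)=P(m_4)$ force $g,h>1$ with $P(g),P(h)\ge P(rs)$, are exactly the paper's starting point. The gap is in the estimate that follows, which you assert from a heuristic rather than derive.

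First, the nesting problem disappears if you keep $r,s$ on the outside with absolute values. For fixed $r,s$ the $g$- and $h$-sums then decouple.

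Second, the missing idea is that each of these sums can be evaluated exactly. Put $\theta=t_1-t_3$ and $E(u):=\prod_{z<p\le u}(1-p^{-1-i\theta})^{-1}$. Then
\[
\sum_{\substack{g>1,\ p\mid g\Rightarrow p\in(z,x+y]\\ P(g)\ge P(rs)}}\frac{1}{g^{1+i\theta}}=\big(E(x+y)-1\big)-\big(E_{<P(rs)}-1\big),
\]
where $E_{<P(rs)}$ is the product over $z<p<P(rs)$. Each bracket is $e^{w}-1$ with $w=\sum_{z<p\le u}p^{-1-i\theta}+O(1/z)$, and \eqref{4.13} gives $|e^w-1|\ll\frac{\log x}{\log z}\gamma(\theta)$ as follows:
\begin{itemize}
\item when $|\theta|\ge1/\log z$, use $|e^w-1|\ll|w|\ll 1/(|\theta|\log z)$;
\item when $|\theta|\le 1/\log x$, the trivial bound suffices;
\item in between, split the prime sum at $e^{1/|\theta|}$ to get $|w|\le\log\frac{1}{|\theta|\log z}+O(1)$.
\end{itemize}
Your top-prime decomposition leads here too. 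Its inner weight is $S(p)=E(p)$, and $p^{-1-i\theta}E(p)=E(p)-E(p^-)$, so the sum over the top prime telescopes.

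The second bound works the same way with $g,h$ outside. The sum over coprime $r,s$ with $rs>1$ and $P(rs)\le\min(P(g),P(h))$ equals
\[
\prod_{z<p\le\min(P(g),P(h))}\big(1+\sum_{k\ge1}p^{-k(1+i(t_1-t_4))}+\sum_{\ell\ge1}p^{-\ell(1+i(t_2-t_3))}\big)-1 .
\]
This builds in the coprimality that your case split into $r>1$ and $s>1$ leaves unhandled.

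Without this identity, the partial summation you actually describe does not give what you claim. Suppose you bound $\sum_{z<p\le u}p^{-1-i\theta}$ uniformly by $1/(|\theta|\log z)$ via \eqref{4.13}, and the variation of $S$ by $\log x/\log z$. Then for $|\theta|\ge 1/\log z$ you get only $\frac{\log x}{\log z}\cdot\frac{1}{|\theta|\log z}=(\frac{\log x}{\log z})^2\gamma(\theta)$. That is a gain of $1/(|\theta|\log z)$ over the trivial bound, not the $1/(|\theta|\log x)$ you claim. In the middle range the trivial bound already gives the same $(\frac{\log x}{\log z})^2\gamma(\theta)$.

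Doing this for both $g$ and $h$ yields $(\log x/\log z)^6\gamma(t_1-t_3)\gamma(t_2-t_4)$, not the stated fourth power. Folding $r,s$ into a weight of size $(\log p/\log z)^3$ loses the same factor, and still leaves the coupling with the $h$-sum unresolved.

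Your heuristic is sound: the weight grows with $p$, so the oscillation is effectively seen at scale $\log x$. Making it rigorous needs either the exact telescoping, or tail bounds $\sum_{u<p\le x+y}p^{-1-i\theta}\ll 1/(|\theta|\log u)$ together with further bookkeeping. A bound with a larger fixed power of $\log x/\log z=\log\log\log x$ would still be absorbed in Lemma~\ref{lem: R} and Phase Three, but it is not the lemma as stated.
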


 As discussed earlier, the key feature of the above bound is that unless all four points $t_1, t_2, t_3, t_4$ are close to each other (on a scale of $\approx 1/\log x$), we must have a saving.   When we apply the bound, it will be very important that the extra factor $(\log x/\log z)^4$ is not too large (a very small power of $\log\log x$ at most), and here it is crucial that $z$ was chosen fairly close to $x$ (on a logarithmic scale).

\begin{proof}[Proof of Lemma \ref{lem4.4}]  We parametrize the solutions to $m_1 m_2 = m_3 m_4$ by setting $g= (m_1, m_3)$ and $h=(m_2,m_4)$.  Then writing $m_1 = ga$ and $m_3= gb$, we find that $m_2=hb$ and $m_4=ha$.   With this parametrization, we may write 
$$ 
G(t_1, t_2, t_3, t_4) = \rsum_{a, b, g, h} \frac{1}{g^{1+i(t_1-t_3)}} \frac{1}{h^{1+i(t_2-t_4)}} \frac{1}{a^{1+i(t_1-t_4)}} \frac{1}{b^{1+i(t_2-t_3)}}, 
$$ 
where the $\star$ indicates the conditions (obtained by rewriting the conditions in \eqref{4.11}) 
$$
p|(ghab) \implies z < p\le x+y; \ \ (a,b)=1; \ \ ab >1; \ \ g, h >1; \ \ P(ab) \le \min(P(g), P(h)). 
$$ 
 
 One way to bound $G$ is by keeping $a$ and $b$ on the outside, and exploiting cancellation in the $g$ and $h$ sums.  Thus 
 $$ 
 G(t_1, t_2, t_3, t_4) \ll \sum_{\substack{ (a,b)=1 \\ ab >1 \\ p|ab \implies z < p\le x+y}} \frac {1}{ab} \Big| \sum_{\substack{ g> 1 \\ p|g \implies p\in (z,x+y] \\ P(ab) \le P(g)}} \frac{1}{g^{1+i(t_1-t_3)}} 
 \Big| \Big| \sum_{\substack{h> 1 \\ p|h \implies p\in (z,x+y] \\ P(ab) \le P(h)}} \frac{1}{h^{1+i(t_2-t_4)}}\Big|. 
 $$ 
Now the sum over $g$ may be rewritten as
$$ \sum_{\substack{ g> 1 \\ p|g \implies p\in (z,x+y]}} \frac{1}{g^{1+i(t_1-t_3)}} - \sum_{\substack{ g> 1 \\ p|g \implies p\in (z,P(ab)]}} \frac{1}{g^{1+i(t_1-t_3)}} , $$
where we have 
 $$ 
\Big| \sum_{\substack{ g> 1 \\ p|g \implies p\in (z,x+y]}} \frac{1}{g^{1+i(t_1-t_3)}} \Big| = \Big| \exp\Big( \sum_{z < p\le x+y} \Big(\frac{1}{p^{1+i(t_1-t_3)}} +O\Big(\frac{1}{p^2}\Big)\Big)\Big) -1 \Big| \ll \Big(\frac{\log x}{\log z}\Big) \gamma(t_1-t_3) ,
 $$ 
and the same bound for the sum over $g$ with all prime factors on the range $(z,P(ab)]$. (To see the estimate above, note that if $|t_1 - t_3| \geq 1/\log z$ then \eqref{4.13} implies the prime sum is $\ll \frac{1}{|t_1 - t_3| \log z}$, and use the fact that $e^w-1=O(w)$ when $w=O(1)$. If $|t_1 - t_3| \leq 1/\log x$, then \eqref{4.13} implies the prime sum has absolute value $\le \log \frac{\log x}{\log z} + O(1)$, and this gives an acceptable bound. Finally, if $1/\log x < |t_1 - t_3| < 1/\log z$ then splitting the prime sum at $e^{1/|t_1 - t_3|}$ shows it has absolute value $\le \log \frac{1}{|t_1 - t_3|\log z} + O(1)$, and this also gives an acceptable bound.)
 A similar bound holds for the sum over $h$ and we conclude that 
 $$ 
 G(t_1, t_2, t_3, t_4) \ll \Big(\frac{\log x}{\log z}\Big)^2 \gamma(t_1-t_3)\gamma(t_2-t_4) \prod_{z < p\le x+y} \Big(1-\frac 1p\Big)^{-2} 
 \ll  \Big(\frac{\log x}{\log z}\Big)^4 \gamma(t_1-t_3)\gamma(t_2-t_4).
 $$ 
 This establishes one of the bounds claimed in the lemma. 
 
 To establish the other bound, we keep the sums over $g$ and $h$ on the outside and look for cancellation in the sums over $a$ and $b$.  Thus 
 $$ 
 G(t_1, t_2, t_3, t_4) \ll \sum_{\substack{g, h >1 \\ p|gh \implies z < p\le x+y}} \frac{1}{gh} \Big| \sum_{\substack{ab > 1 \\ (a,b)=1 \\ p|ab \implies z < p\le x+y \\ P(ab) \le \min (P(g), P(h))}} \frac{1}{a^{1+i(t_1-t_4)} b^{1+i(t_2-t_3)}}\Big|. 
 $$ 
 Now the sum over $a$, $b$ equals 
 \begin{align*} 
 &\prod_{z < p \le \min(P(g),P(h))} \Big(1 + \sum_{k=1}^{\infty} 
 \frac{1}{p^{k(1+i(t_1-t_4))} }  + \sum_{\ell=1}^{\infty} \frac{1}{p^{\ell(1+i(t_2-t_3))}} \Big) - 1 
 \\
 &= \exp\Big( \sum_{z < p\le \min(P(g),P(h))} \Big(\frac{1}{p^{1+i(t_1-t_4)}} +\frac{1}{p^{1+i(t_2-t_3)}} +O\Big(\frac 1{p^2}\Big)\Big) \Big)-1 
 \end{align*}
 and using \eqref{4.13} as above we see that this is $\ll (\frac{\log x}{\log z})^{2} (\gamma(t_1-t_4) + \gamma(t_2-t_3))$.   Thus  
 \begin{align*}
 G(t_1, t_2, t_3, t_4) &\ll \Big(\frac{\log x}{\log z} \Big)^2 (\gamma(t_1-t_4) + \gamma(t_2-t_3)) \prod_{z < p \le x+y} \Big(1-\frac 1p\Big)^{-2}  \\
& \ll \Big(\frac{\log x}{\log z} \Big)^4  (\gamma(t_1-t_4) + \gamma(t_2-t_3)) , 
 \end{align*} 
as desired. 
 \end{proof}

We return to the problem of estimating \eqref{4.12} with the $t_j$ restricted to $|t_j| \le (\log x)^{100}$.   Lemma \ref{lem4.4} will allow us to make a further important truncation, to the situation where the four variables $t_j$ are all very nearly equal.   Define 
   \begin{equation*}
        \mathcal{R}: = \Big\{(t_1, t_2, t_3, t_4): |t_j| \le (\log x)^{100}; \ \  |t_1-t_3|, |t_2-t_4|\le \frac{(\log \log z)^{3}}{\log z}; \ \   |t_1-t_4|\le \frac{(\log \log z)^{9}}{\log z} \Big\}, 
    \end{equation*}
    and let ${\mathcal R}^{c}$ denote the complement of ${\mathcal R}$ in $[-(\log x)^{100}, (\log x)^{100}]^4$.

\begin{lemma}\label{lem: R} Keep notations as above.  With probability $1-O((\log \log z)^{-\frac{1}{10}})$, we have
 \begin{equation}\label{eqn: Abadint}
 x^2\int_{ \mathcal{R}^c}   \prod_{j=1}^{4} |F_z(\tfrac{1}{2}+it_j)W(t_j)| |G(t_1,t_2,t_3,t_4)|  dt_1 ... dt_4  \ll \frac{V(x,y)^2}{(\log \log z)^{\frac{1}{5}}}. 
 \end{equation}
\end{lemma}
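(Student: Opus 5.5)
Since the $\mathcal{R}^c$ integral is nonnegative, the plan is to bound its expectation over $(f(p))_{p\le z}$ by $V(x,y)^2(\log\log z)^{-3/10}$ and apply Markov's inequality. By \eqref{4.3}, $V(x,y)\asymp y\min(1,\log T/\sqrt{\log\log x})\gg y/\sqrt{\log\log z}$, and $x\asymp y\sqrt{\log x}$ at worst is far too lossy. So I will not use the crude bound $x^2\cdot(\ldots)$ with $|W|\le\delta$ naively. Instead I will use \eqref{4.9}: $|W(t)|\ll \min(\delta,1/(1+|t|))$, and $x\delta\asymp y$, so $x^2\prod_j|W(t_j)|$ is at most $y^2$ times a weight of total mass $\ll T$ in each variable, with tails like $T/|t|$. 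For the expectation of the four Euler products I will use H\"older together with the pairing structure: $\E\prod_j|F_z(\tfrac12+it_j)|\le \prod_j(\E|F_z(\tfrac12+it_j)|^4)^{1/4}\ll(\log z)^4$, but this is too big. Hence I will instead bound $\E\prod_j|F_z|\le(\E|F_z(t_1)F_z(t_3)|^2)^{1/2}(\E|F_z(t_2)F_z(t_4)|^2)^{1/2}$, using the two-point bound from \cite[display (6)]{HarperLow}. This gives $\E|F_z(\tfrac12+it)F_z(\tfrac12+iu)|^2\ll(\log z)^2\min(\log z,1/|t-u|)^2$ for $|t-u|\le1$, and $(\log z)^2(\log(2+|t-u|))^4$ otherwise, as in Lemma~\ref{lem: slice}.

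On $\mathcal{R}^c$, one of three things happens: $|t_1-t_3|>(\log\log z)^3/\log z$, or $|t_2-t_4|>(\log\log z)^3/\log z$, or (with both small) $|t_1-t_4|>(\log\log z)^9/\log z$. I will split into these cases and choose, in each, the more useful of the two bounds in Lemma~\ref{lem4.4}.

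\emph{Case A:} $|t_1-t_3|$ is large. I use $G\ll(\log x/\log z)^4\gamma(t_1-t_3)\gamma(t_2-t_4)$, pair $(t_1,t_3)$ and $(t_2,t_4)$, and factor the integral into two double integrals. Writing $h=t_1-t_3$, each pair contributes
\[
\int |W(t_1)W(t_1-h)|\,\gamma(h)(\log z)\min(\log z,1/|h|)\,dh\,dt_1 .
\]
Here $\gamma(h)\min(\log z,1/|h|)$ integrates to $\asymp\log\log x/\!\log x\cdot\log z$-type quantities. The unrestricted pair should give $\ll T(\log z)\cdot O(\log x/\log z)$-ish, comparable to $T\log z$. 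The restricted pair $|h|>(\log\log z)^3/\log z$ gains a factor $\ll 1/(\log\log z)^3$ from $\int_{|h|>\eta}\gamma(h)\log z/(|h|\log z)^{\ldots}$.

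I then need to compare against $V^2/y^2\asymp\min(1,\ldots)^2$. Here $\int|W|^2$-type sums produce $T\cdot(\log z)/T^2$ after normalizing by $1/\log z$ factors inherent in $V$ (recall $V\approx y\mu/\log z$ with $\mu\gg\log z/\sqrt{\log\log z}$). The loss $(\log x/\log z)^4=(\log\log\log x)^4$ together with $\sqrt{\log\log z}^{\,2}$ from the lower bound on $\mu$ must then be beaten by $(\log\log z)^{-3}$, which works.

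\emph{Case B (the main obstacle):} $t_1\approx t_3$ and $t_2\approx t_4$ but $|t_1-t_4|$ is large. Here I use $G\ll(\log x/\log z)^4(\gamma(t_1-t_4)+\gamma(t_2-t_3))$, which gives only a saving of $1/(|t_1-t_4|\log x)$, with no decay in $t_1-t_3$. I would restrict $|t_1-t_3|,|t_2-t_4|\le(\log\log z)^3/\log z$, so that these integrations cost only $(\log\log z)^3/\log z$ each. I would then bound $\E\prod|F|$ by
\[
(\E|F(t_1)|^2|F(t_2)|^2)^{1/2}\,(\E|F(t_3)|^2|F(t_4)|^2)^{1/2},
\]
which, for $|t_1-t_2|\gtrsim(\log\log z)^9/\log z$, is $\ll(\log z)^2\min(\log z,1/|t_1-t_2|)^{2}$.

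The resulting $h=t_1-t_2$ integral of $\gamma(h)\min(\log z,1/|h|)^2$ over $|h|>(\log\log z)^9/\log z$ gives $\ll(\log z)/(\log\log z)^9$. Combined with the $(\log\log z)^{6}$ loss from the two short integrations and the $(\log\log\log x)^4$ loss, this nets $(\log\log z)^{-3+o(1)}$. That is more than enough against the $(\log\log z)$ loss from $\mu$'s lower bound, giving the claimed $(\log\log z)^{-1/5}$ after Markov.

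The delicate point I expect is bookkeeping the $W$-weights so that exactly one factor $T$ appears per free long variable, matching $V^2\asymp y^2(\mu/\log z)^2$. I also need to make sure large $|t_i-t_j|>1$ ranges are handled by the $\log(2+|h|)^4$ bound against the $1/|h|$ decay of $\gamma$ and of $W$.
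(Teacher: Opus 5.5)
\textbf{Verdict: there is a genuine gap.} Your plan fails at its first step. You propose to bound the \emph{expectation} of the whole $\mathcal{R}^c$ integral and apply Markov's inequality once. That expectation involves $\E\prod_{j=1}^4|F_z(\tfrac12+it_j)|$, a fourth-moment-type quantity, and it is genuinely much larger than $V(x,y)^2$.

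\textbf{Where the bookkeeping goes wrong.}
\begin{itemize}
\item \emph{Case A.} You use $(\E|F_z(\tfrac12+it_1)F_z(\tfrac12+it_3)|^2)^{1/2}\asymp\log z\,\min(\log z,|t_1-t_3|^{-1})$. With this, the unrestricted pair integral $\iint\gamma(t_2-t_4)|W(t_2)W(t_4)|\log z\min(\log z,|t_2-t_4|^{-1})\,dt_2\,dt_4$ is at least of order $\delta(\log z)^2/\log x$, i.e.\ $\delta\log z$ up to $\log\log\log x$ factors, not $\delta$. So case A yields roughly $y^2(\log z)^2(\log\log z)^{-3}$.
\item \emph{No compensating normalization.} There is no ``$1/\log z$ factor inherent in $V$'' to absorb this. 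We have $V=e^{-\gamma}y\mu/\log z$ with $\mu\asymp\log z\min\{1,\log T/\sqrt{\log\log x}\}$, so $V\asymp y\min\{\cdots\}\le y$.
\item \emph{Case B.} Set $\eta=(\log\log z)^3/\log z$ and $\eta'=(\log\log z)^9/\log z$. Only $t_1$ is a free long variable, so the weights give $x^2\delta^4T\asymp y^2\delta$. The remaining factor is $(\log z)^2\eta^2\int_{|k|>\eta'}\frac{dk}{|k|^3\log x}\asymp\log z\,(\log\log z)^{-12}$, up to powers of $\log\log\log x$. The total is of order $y^2\delta\log z\,(\log\log z)^{-12}$, and $\delta\log z\gg\sqrt{\log x}/\log\log\log x$ in our range.
\item \emph{Not just lossiness of Cauchy--Schwarz.} For $|t_j-t_k|\le 1$ the exact Euler product formula gives $\E\prod_{j}|F_z(\tfrac12+it_j)|\asymp\log z\prod_{j<k}\min(\log z,|t_j-t_k|^{-1})^{1/2}$. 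With this and $|G|$ bounded via Lemma~\ref{lem4.4}, the part of case A where $|t_1-t_2|,|t_3-t_4|\le 1/\log z$ alone contributes about $y^2\delta\log x\,(\log\log\log x)(\log\log z)^{-9}$ to the expectation. So no ``expectation plus one Markov'' argument can work.
\end{itemize}

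\textbf{The missing idea: factor the integral and apply Markov to each factor separately.} Never take the expectation of a product of four Euler products.

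\emph{Case (i) (your case A).} The bound $|G|\ll(\log x/\log z)^4\gamma(t_1-t_3)\gamma(t_2-t_4)$ together with \eqref{4.9} makes the integrand split. The integral is then at most $x^2(\log x/\log z)^4XY$. Here $X$ is the $(t_1,t_3)$ double integral restricted to $|t_1-t_3|>\eta$, and $Y$ is the unrestricted $(t_2,t_4)$ one.
\begin{itemize}
\item Each of $X,Y$ is linear in $|F_z(\tfrac12+it)F_z(\tfrac12+iu)|$. Its expectation is only $\ll(\log z)^{1/2}(|t-u|^{-1}+\log^2(2+|t-u|))^{1/2}$, roughly the square root of what you used.
\item This gives $\E X\ll\delta(\log\log z)^{-3/2}$ and $\E Y\ll\delta$.
\item Two separate applications of Markov give $X\le\delta(\log\log z)^{-7/5}$ and $Y\le\delta(\log\log z)^{1/10}$ with probability $1-O((\log\log z)^{-1/10})$.
\item Hence the contribution is $\ll y^2(\log\log z)^{-6/5}\ll V(x,y)^2(\log\log z)^{-1/5}$, using \eqref{4.3}.
\end{itemize}

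\emph{Case (ii) (your case B).} Here $|t_2-t_3|\gg\eta'$ as well, so one bounds $|G|\ll(\log\log z)^{-8}$ outright. The integral is then $\ll x^2(\log\log z)^{-8}Z^2$, where $Z$ is a double integral over $|t_1-t_3|\le\eta$. Using $\E|F_zF_z|\ll\log z$, one gets $\E Z\ll\delta(\log\log z)^3$. Markov applied to $Z$ gives $\ll y^2(\log\log z)^{-9/5}$.

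A product of two factors, each controlled with high probability, is controlled with high probability, even though the expectation of the product is huge. This decoupling is exactly what your single-Markov argument cannot supply.
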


 \begin{proof}  If $(t_1,t_2,t_3,t_4) \in {\mathcal R}^c$ then we must have either (i) $|t_1-t_3|$ or $|t_2-t_4| \ge (\log \log z)^3/\log z$, or (ii) $|t_1-t_3|$ and $|t_2-t_4|$ are 
 $\le (\log \log z)^3/\log z$ but $|t_1-t_4|$ exceeds $(\log \log z)^9/\log z$.   
 
 We begin with case (i), supposing that $|t_1-t_3| \ge (\log \log z)^3/\log z$ with an identical argument applying when $|t_2-t_4|$ is large.  Here we use the bound of $|G(t_1,t_2,t_3,t_4)|$ by $(\log x/\log z)^4 \gamma(t_1-t_3) \gamma(t_2-t_4)$ furnished by Lemma \ref{lem4.4}, together with the bound \eqref{4.9} for the weights $W(t_j)$.   Thus we seek a bound for 
\begin{align} \label{4.17} 
x^2 \Big(\frac{\log x}{\log z}\Big)^4 &\Big( \int_{\substack{|t_1|, |t_3| \leq (\log x)^{100} \\ |t_1-t_3|>(\log \log z)^{3}/\log z}} \gamma(t_1 - t_3) \prod_{j=1,3} |F_z(\tfrac{1}{2}+it_j)| \min\Big\{
\delta, \frac{1}{1+|t_j|}\Big\} dt_1 dt_3 \Big) \nonumber \\
& \cdot \Big( \int_{|t_2|, |t_4| \leq (\log x)^{100}} \gamma(t_2 - t_4) \prod_{j=2,4} |F_z(\tfrac{1}{2}+it_j)| \min\Big\{\delta, \frac{1}{1+|t_j|}\Big\} dt_2 dt_4 \Big).
\end{align}
 
 Consider the expectation of the double integral over $t_1$ and $t_3$ above.  Using \cite[Euler product result 1]{Harperlargevalue}, we obtain 
 \begin{align*}
 \E[|F_z(\tfrac 12+it_1) F_z(\tfrac 12+it_3)|] &\ll (\log z)^{\frac 12} \Big( \frac{1}{|t_1-t_3|} + (\log (2+|t_1-t_3|))^2 \Big)^{\frac 12}\\
 & \ll (\log z)^{\frac 12} \Big( \frac{1}{|t_1-t_3|^{\frac 12}} + \log \log x\Big).  
 \end{align*} 
 Since $\gamma(t_1-t_3) = (|t_1-t_3| \log x)^{-1}$ in this range, the expectation of this double integral is 
 $$ 
 \ll \frac{1}{(\log x)^{\frac 12}} \int_{\substack{|t_1|, |t_3| \leq (\log x)^{100} \\ |t_1-t_3|>(\log \log z)^{3}/\log z}} \frac{1}{|t_1-t_3|} \Big(\frac{1}{|t_1-t_3|^{\frac 12}} + \log \log x\Big) 
 \prod_{j=1, 3} \min\Big( \delta, \frac{1}{1+|t_j|}\Big) dt_1 dt_3. 
 $$ 
Bounding $\min(\delta, 1/(1+|t_1|)) \min (\delta, 1/(1+|t_3|))$ by $\min(\delta^2, 1/(1+|t_1|^2)) + \min(\delta^2, 1/(1+|t_3|^2))$, and using the symmetry of $t_1$ and $t_3$ we may bound the above by 
$$ 
\ll  \frac{1}{(\log x)^{\frac 12}} \int_{|t_1| \le (\log x)^{100}} \min\Big(\delta^2, \frac{1}{1+|t_1|^2}\Big)  \frac{(\log z)^{\frac 12}}{(\log \log z)^{\frac 32}} dt_1 
\ll \frac{\delta}{(\log \log z)^{\frac 32}}. 
$$ 
By Markov's inequality, we conclude that the double integral over $t_1$ and $t_3$ is at most $\delta (\log \log z)^{-\frac 75}$ with probability $1-O((\log \log z)^{-\frac 1{10}})$.

The double integral over $t_2$, $t_4$ may be handled exactly similarly, except now we need to include the contribution from points where $|t_2-t_4| \leq (\log \log z)^{3}/\log z$.
Using that  $\E[ |F_z(\frac 12+it_2) F_z(\tfrac 12+it_4)|] \ll \log z$, the expected value of this extra portion is 
$$ 
\ll \log z \int_{\substack{ |t_2|, |t_4| \le (\log x)^{100} \\ |t_2-t_4| \le (\log \log z)^3/\log z}} \min\Big(\delta, \frac{1}{1+|t_2|}\Big) \min\Big(\delta, \frac{1}{1+|t_4|}\Big) \min\Big(1, \frac{1}{|t_2-t_4|\log x}\Big) dt_2 dt_4,  
$$ 
 which by a small calculation is $\ll \delta$.  Once again by Markov's inequality, with probability $1-O((\log \log z)^{-\frac{1}{10}})$ the double integral over $t_2$ and $t_4$ in \eqref{4.17} is bounded by $\delta (\log \log z)^{\frac 1{10}}$.  
 
 Combining these two observations, and recalling that $\delta \asymp y/x$, we deduce that with probability $1-O((\log \log z)^{-\frac 1{10}})$ the quantity in \eqref{4.17} is 
 $$ 
 \ll x^2 \Big(\frac{\log x}{\log z}\Big)^{4} \frac{\delta}{(\log \log z)^{\frac 75}} \delta (\log \log z)^{\frac 1{10}} \ll \frac{y^2}{(\log \log z)^{\frac 65}} \ll \frac{V(x,y)^2}{(\log \log z)^{\frac 15}}. 
 $$ 

 This takes care of case (i), and we turn now to the second case when $|t_1-t_3|$ and $|t_2-t_4|$ are $\le (\log \log z)^3/\log z$ but $|t_1-t_4|$ exceeds $(\log \log z)^9/\log z$.  Note that $|t_2-t_3|$ is forced to be $\gg (\log \log z)^{9}/\log z$, so that $|G(t_1,t_2,t_3,t_4)| \ll (\log x/\log z)^4 (\gamma(t_1-t_4) + \gamma(t_2-t_3)) \ll (\log \log z)^{-8}$.  
Therefore the contribution of this case to \eqref{eqn: Abadint} is 
\begin{equation} 
\label{4.18} 
\ll \frac{x^2}{(\log \log x)^8} \Big( \int_{\substack{|t_1|, |t_3| \le (\log x)^{100} \\ |t_1-t_3| \le (\log \log z)^3/\log z} } \prod_{j=1,3} |F_z(\tfrac 12+it_j)| \min\Big(\delta^2, \frac{1}{1+|t_1|^2}\Big) dt_1 dt_3 \Big)^2,
\end{equation}
where we used that the integral over $t_2$, $t_4$ here is identical to the one over $t_1$, $t_3$.    Using  $\E[ |F_z(\frac 12+it_1) F_z(\tfrac 12+it_3)|] \ll \log z$, the expected value of the double integral above is 
$$ 
\ll \log z  \int_{\substack{|t_1|, |t_3| \le (\log x)^{100} \\ |t_1-t_3| \le (\log \log z)^3/\log z} } \min\Big(\delta^2, \frac{1}{1+|t_1|^2}\Big) dt_1 dt_3 \ll \delta (\log \log z)^3.
$$ 
Thus with probability at least $1-O((\log \log z)^{-\frac 1{10}})$ the double integral in \eqref{4.18} is $\le \delta (\log \log z)^{3+\frac {1}{10}}$, so (again recalling that $\delta \asymp y/x$) the quantity in \eqref{4.18} 
is $\ll y^2 (\log \log z)^{-\frac{9}{5}}$.   This is stronger than the stated bound and completes the proof of the lemma. 
 \end{proof}

\vspace{12pt}
\noindent {\bf Phase Three: Handle the near diagonal.}   In view of Lemma \ref{lem: R}, it remains now to estimate the integral in \eqref{4.12} when restricted to the region ${\mathcal R}$, where in particular all four variables $t_1$, $t_2$, $t_3$, $t_4$ are near each other (precisely, within $2(\log \log z)^9/\log z$ of each other).  We use the bound 
$|G(t_1,t_2,t_3,t_4)| \ll (\log x/\log z)^4 \gamma(t_1 -t_3) \gamma(t_2-t_4)$ and note that 
$$ 
\prod_{j=1}^{4} |F_z(\tfrac 12+it_j)| \le |F_z(\tfrac 12+it_1)F_z(\tfrac 12+it_2)|^2 + |F_z(\tfrac 12+it_3) F_z(\tfrac 12+it_4)|^2. 
$$ 
Thus the integral over the region ${\mathcal R}$ in \eqref{4.12} may be bounded by 
\begin{align*}
&\ll x^2 \Big( \frac{\log x}{\log z}\Big)^4 \int_{\substack{|t_1|, |t_2| \le (\log x)^{100}\\ |t_1 -t_2| \le 2 (\log \log z)^9/\log z}} |F_z(\tfrac 12+it_1)F_z(\tfrac 12+it_2)|^2 |W(t_1) W(t_2)| \\
&\times \int_{|t_1-t_3| \le 2(\log \log z)^9/\log z} \gamma(t_1-t_3) |W(t_3)| dt_3 \int_{|t_2-t_4|\le 2 (\log \log z)^9/\log z} \gamma(t_2-t_4) |W(t_4)| dt_4 dt_1 dt_2. 
\end{align*}
Here we have omitted the term corresponding to $|F_z(\tfrac 12+it_3)F_z(\tfrac 12+it_4)|^2$, which makes an identical contribution.

Now 
\begin{align*}
 \int_{|t_1-t_3| \le 2(\log \log z)^9/\log z} \gamma(t_1-t_3) |W(t_3)| dt_3 &\ll \min\Big(\delta, \frac{1}{1+|t_1|}\Big) \frac{\log \log \log x}{\log x}\\
 & \ll \frac{1}{\log z} \min \Big(\delta, \frac{1}{1+|t_1|}\Big), 
 \end{align*} 
 and a similar bound holds for the integral over $t_4$.   Thus we are left with the task of bounding
\begin{equation} 
\label{4.19} 
\frac{x^2}{(\log z)^2} \Big(\frac{\log x}{\log z}\Big)^4 \int_{\substack{|t_1|, |t_2| \le (\log x)^{100}\\ |t_1 -t_2| \le 2 (\log \log z)^9/\log z}} \prod_{j=1}^{2} 
|F_z(\tfrac 12+it_j)|^2 
\min\Big(\delta^2,\frac{1}{1+|t_j|^2}\Big) dt_1dt_2. 
\end{equation}

We use the barrier events method again to analyze the above double integral. The situation is similar to our work in section \ref{SEC: CONCEN}; while what we require now is less delicate than our work in that section, there is one further complication that the range of the $t_j$ here can be quite large compared with $T$.  Thus let $\tau^{*}$ be the smallest natural number for which $z^{e^{-\tau^*}} \leq e^{(\log\log z)^3}$, and $B$ be a suitably large fixed number. For $|t| \leq (\log x)^{100}$, let $\mathcal{G}^{*}(t)$ be the event that for all $0\le j\le \tau^* -1$ 
\begin{equation}\label{4.20} 
(\sqrt{T+|t|} e^{\tau^*-j}  \log\log z)^{-B}  \le \prod_{z^{e^{-\tau^*}} <p \le z^{e^{-j}} } \Big|1-\frac{f(p)}{p^{\frac 12+ it}}\Big|^{-1} \le \sqrt{T+|t|} e^{\tau^*-j} (\log \log z)^5 .
\end{equation}
Let $\mathcal{H}^{*}(t)$ denote the event that $\mathcal{G}^{*}(t)$ holds and in addition 
\begin{equation} 
\label{4.21} 
\prod_{z^{e^{-\tau^*}} <p \le z } \Big|1-\frac{f(p)}{p^{\frac 12+ it}}\Big|^{-1} \le \sqrt{T+|t|} \frac{e^{\tau^*}}{ (\log \log z )^{50}}.
\end{equation} 
Note that  the barriers here weaken when $|t| \geq T$, which is needed to show that $\mathcal{G}^{*}(t)$ holds for all $|t|\le (\log x)^{100}$ with high probability.  This weakening will be compensated by the decay of $|W(t)|$ for large $t$.  Another feature is that, compared to $\mathcal{H}(t)$ where there is an extra restriction at all scales $j$, the event $\mathcal{H}^{*}$ is simpler and only imposes an extra restriction on the full Euler product up to $z$. The very close proximity of $t_1$ and $t_2$ in \eqref{4.19} means that we will only use the barrier ${\mathcal H}^*(t)$ to factor out and bound a copy of the full Euler product. The extra saving $(\log\log z)^{50}$ in \eqref{4.21}, showing that (when ${\mathcal H}^*(t)$ holds) the Euler products cannot be {\em too} large, will then lead to an acceptable bound for \eqref{4.19}.

We first modify the argument of Proposition \ref{prop3.3} to show that the event ${\mathcal G}^{*}(t)$ holds for all $|t|\le (\log x)^{100}$ with high probability. 

\begin{proposition} \label{prop4.6}  With notations as above 
$$  
\P\left( \mathcal{G}^{*}(t)~\text{holds for all $|t|\le (\log x)^{100}$} \right) \geq 1 - O((\log\log x)^{-5}). 
$$ 
 \end{proposition}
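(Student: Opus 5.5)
The plan is to follow the proof of Proposition~\ref{prop3.3} closely: a union bound over scales $j$, a mesh discretization at each scale, and Markov's inequality. The one new ingredient is that the admissible range of $t$ is cut into dyadic pieces so the growing barrier $\sqrt{T+|t|}$ can be used. Write $\mathcal{F}^*_j(s)=\prod_{z^{e^{-\tau^*}}<p\le z^{e^{-j}}}(1-f(p)p^{-s})^{-1}$. By the same Mertens computation as in \eqref{3.4}, $\E[|\mathcal{F}^*_j(\tfrac12+it)|^{\pm 2}]\ll e^{\tau^*-j}$. Also $e^{-\tau^*}\log z\asymp(\log\log z)^3$. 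For fixed $0\le j\le\tau^*-1$, I will aim to show that \eqref{4.20} fails for some $|t|\le(\log x)^{100}$ with probability $\ll(\log\log z)^{-7}$. Since $\tau^*\ll\log\log z$, summing over $j$ then gives the claimed $O((\log\log x)^{-5})$.

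Fix $j$ and split the $t$-range into the piece $|t|\le T$ and the dyadic pieces $U<|t|\le 2U$ with $U=2^kT\le(\log x)^{100}$. On each piece, set $S=T+U$ (or $S=T$ on the first piece), so that the barrier there is at least a constant times $\sqrt{S}\,e^{\tau^*-j}(\log\log z)^5$. Take the mesh $\{e^jn/\log z\}$ restricted to the piece; it has $\ll Ue^{-j}\log z$ points. Exactly as in \eqref{3.8}--\eqref{3.11}, if \eqref{4.20} fails at $t$, then at the nearest mesh point $\widehat t$ one of four things happens:
\begin{itemize}
\item $|\mathcal{F}^*_j(\tfrac12+i\widehat t)|$ exceeds half the upper barrier;
\item $|\mathcal{F}^*_j(\tfrac12+i\widehat t)|^{-1}$ exceeds half the inverse of the lower barrier;
\item the integral of $|(\mathcal{F}^*_j)'|$ over the mesh interval is large;
\item the integral of $|((\mathcal{F}^*_j)^{-1})'|$ over the mesh interval is large.
\end{itemize}
Using the second-moment bounds and the derivative bound $\E|(\mathcal{F}^*_j)'|^2\ll(\log z/e^j)^2e^{\tau^*-j}$ proved there, each has probability
\[
\ll e^{\tau^*-j}\big(Se^{2(\tau^*-j)}(\log\log z)^{10}\big)^{-1}.
\]
For the lower barrier this is even smaller, since $B\ge2$.

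The union bound over the mesh on one piece therefore gives
\[
\ll (Ue^{-j}\log z)\cdot\frac{e^{j-\tau^*}}{S(\log\log z)^{10}}\ll\frac{e^{-\tau^*}\log z}{(\log\log z)^{10}}\ll(\log\log z)^{-7},
\]
using $U/S\le1$. There are $O(\log\log x)$ dyadic pieces, so this loses a factor $\log\log x$ and leaves $(\log\log z)^{-6}$ per $j$. Summing over $j$ would then give only $(\log\log z)^{-5}\cdot\log\log z$, which is slightly too weak.

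This bookkeeping is the main obstacle. To repair it, I will use a slightly higher moment in Markov's inequality. The natural choice is $\E|\mathcal{F}^*_j|^{4}\ll e^{4(\tau^*-j)}$, which holds since the product starts at primes above $e^{(\log\log z)^3}$. With the barrier squared, the per-point probability becomes $\ll (S^2e^{4(\tau^*-j)}(\log\log z)^{20})^{-1}e^{4(\tau^*-j)}$, and the same fourth-moment treatment applies to the derivative terms. Now the sum over dyadic pieces converges geometrically because of the factor $U/S^2$, and the total is $\ll T^{-1}(\log\log z)^{3-20}$ per $j$. This is more than enough, because $\log\log z\asymp\log\log x$ and $\tau^*\ll\log\log x$.

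If the fourth-moment route is inconvenient, the alternative is to sharpen the $j$-dependence. One can exploit the extra slack $e^{\tau^*-j}$ in the barrier versus the $e^{(\tau^*-j)/1}$ growth of the second moment, which gives a per-$j$ bound summable in $j$ with room to absorb the $\log\log x$ from the dyadic pieces. Either way, the structure of the proof is unchanged from Proposition~\ref{prop3.3}.
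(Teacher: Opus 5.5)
Your second-moment mesh argument is correct and is essentially the paper's own proof. The paper sums $(T+|t|)^{-1}$ over the mesh directly, which produces the same $\log\log x$ factor as your count of dyadic pieces. The problem is that you then reject this argument because of a miscount, and replace it with a step that fails.

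You correctly obtain $(\log\log z)^{-7}$ per dyadic piece, hence $(\log\log z)^{-6}$ per $j$ after the $O(\log\log x)$ pieces. Summing over the $\tau^*\ll\log\log z$ values of $j$ then gives $(\log\log z)^{-6}\cdot\log\log z=(\log\log z)^{-5}$. That is exactly the claim, since $\log\log z\asymp\log\log x$. Your assertion that this only yields $(\log\log z)^{-5}\cdot\log\log z$ charges the $j$-sum twice. The same slip is in your opening sentence: a per-$j$ bound of $(\log\log z)^{-7}$ would give $(\log\log z)^{-6}$, not $(\log\log z)^{-5}$. So no repair was needed; keep the first argument as it stands.

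The fourth-moment repair is genuinely wrong. Markov with $\mathbb{E}|\mathcal{F}^*_j|^4\ll e^{4(\tau^*-j)}$ against the fourth power of the barrier gives a per-point bound $\ll S^{-2}(\log\log z)^{-20}$. This has no factor $e^{-(\tau^*-j)}$, and that factor is exactly what cancelled the $e^{-j}$ in the mesh count $\asymp U e^{-j}\log z$. After the dyadic sum you get $\ll e^{-j}\log z/(T(\log\log z)^{20})$ per $j$, not $T^{-1}(\log\log z)^{3-20}$; you kept $e^{-\tau^*}\log z$ where $e^{-j}\log z$ belongs. At $j=0$ this is about $\log z/(T(\log\log z)^{20})$, which tends to infinity because $T\ll\sqrt{\log x}$ in this section. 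The fourth-moment treatment of the derivative terms fails for the same reason.

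The underlying cause is this. $\log|\mathcal{F}^*_j|$ has variance about $\tfrac12(\tau^*-j)$, and the barrier sits at level about $\tau^*-j$, so the Chernoff-optimal exponent is $\lambda=2$. Moments of order $\lambda>2$ give decay $e^{(\lambda^2/4-\lambda)(\tau^*-j)}$ in place of $e^{-(\tau^*-j)}$, which is worse; at $\lambda=4$ there is no decay at all. For the same reason your fallback of extracting summable decay in $j$ has nothing to work with. The slack $e^{-(\tau^*-j)}$ is entirely consumed by the mesh size, and the per-$j$ bound $(\log\log z)^{-6}$ is genuinely uniform in $j$.
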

\begin{proof}  The argument follows closely the pattern of Proposition \ref{prop3.3}.   For each $0\le j\le \tau^*-1$ we consider the probability that \eqref{4.20} fails for some $t$ with $|t|\le (\log x)^{100}$, and study this quantity by considering the mesh of points ${\mathcal T}_j = \{ {\hat t} = e^{j}n/\log z: \ \ n\in {\mathbb Z}, |{\hat t}|\le (\log x)^{100}\}$.  Since 
$$ 
\E\Big[ \prod_{z^{e^{-\tau^*}} <p \le z^{e^{-j}} } \Big|1-\frac{f(p)}{p^{1/2+ it}}\Big|^{-2}\Big] = \prod_{z^{e^{-\tau^*}} <p \le z^{e^{-j}} } \Big(1-\frac 1p\Big)^{-1} \sim e^{\tau^* -j}, 
$$ 
the probability that the Euler product in \eqref{4.20} gets as large as half the right side there is $\ll (T+|t|)^{-1} e^{j-\tau^*} (\log \log z)^{-10}$.  Thus the probability that this happens for some point in the mesh ${\mathcal T}_j$ is 
$$ 
\ll \sum_{|n|\le e^{-j} (\log z)(\log x)^{100}} (T+ e^{j}|n|/\log z)^{-1} e^{j-\tau^*} (\log \log z)^{-10} \ll \frac{e^{-\tau^*} (\log z)}{(\log \log z)^9} \ll (\log \log z)^{-6}. 
$$ 
Summing this over the $\ll \log \log z$ possibilities for $j$, gives a bound of $O( (\log \log z)^{-5})$ for the probability with which \eqref{4.20} can fail (in terms of the upper bound imposed there) for a point in our discretized sets.  This is the dominant contribution, and the other possibilities that arise (as in Proposition \ref{prop3.3}) may be treated similarly. 
\end{proof}

\begin{proposition}\label{prop: no t}  With notations as above, for all $|t| \leq (\log x)^{100}$, we have
   \[
   \E[\mathbbm{1}_{\mathcal{G}^{*}(t)}|F_z(\tfrac{1}{2} + it)|^{2} ] \asymp \log z \cdot \min \Big\{ 1, \frac{\log(T+|t|) + \log\log\log z}{\sqrt{\log \log z}} \Big\} 
   \]
and, noting that $(\mathbbm{1}_{\mathcal{G}^{*}(t)}-\mathbbm{1}_{\mathcal{H}^{*}(t)})$ is the indicator function of the event that ${\mathcal G}^*(t)$ holds but ${\mathcal H}^*(t)$ fails, 
\[
\E[(\mathbbm{1}_{\mathcal{G}^{*}(t)}-\mathbbm{1}_{\mathcal{H}^{*}(t)})|F_z(\tfrac{1}{2} + it)|^{2} ] \ll \log z \cdot \min \Big\{ 1, \frac{\log (T+|t|)}{\sqrt{\log \log z}} \Big\} \cdot \frac{(\log \log \log z)^{3}}{\log \log z}.
\]  
 \end{proposition}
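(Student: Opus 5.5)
The plan is to follow the proofs of Proposition~\ref{prop: mu} and Proposition~\ref{prop: Hfail} closely, with $\tau$ replaced by $\tau^*$ and the barrier height $\tfrac12\log T$ replaced by $\tfrac12\log(T+|t|)+O(\log\log\log z)$. By translation invariance in law we may take $t=0$ inside the expectations. The barrier heights still depend on $t$, so $T+|t|$ is simply carried along as a parameter.

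\textbf{First estimate.} We factor the expectation $\E[\mathbbm{1}_{\mathcal{G}^*(0)}|F_z(\tfrac12)|^2]$ into two independent pieces:
\begin{itemize}
\item the contribution of primes $p\le z^{e^{-\tau^*}}$, which is unconstrained and has expectation $\asymp(\log\log z)^3$;
\item the barrier-constrained product over $z^{e^{-\tau^*}}<p\le z$.
\end{itemize}
The second piece we normalise by its mean $\asymp e^{\tau^*}$. Since $\log z^{e^{-\tau^*}}\approx(\log\log z)^3$ is large, \cite[Lemma 4]{HarperLow} (approximate Girsanov) applies, exactly as in Proposition~\ref{prop: mu}. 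It shows the normalised expectation is $\asymp$ the probability of a ballot event for a Gaussian walk $\sum_{m\le j}G_m$, with $\mathrm{Var}(G_m)=\tfrac12+o(1)$ and about $\tau^*$ steps. The upper barrier is
$$
\tfrac12\log(T+|t|)+5\log\log\log z+O(1),
$$
and the lower barrier is $-B(\tfrac12\log(T+|t|)+j+\log\log\log z)$.

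Note that $\tau^*=\log\log z-3\log\log\log z+O(1)\asymp\log\log z$. Probability Results 1 and 2 of \cite{HarperLow} then give that this ballot probability is
$$
\asymp\min\Big\{1,\frac{\log(T+|t|)+\log\log\log z}{\sqrt{\log\log z}}\Big\}.
$$
The lower barrier is far away and only affects constants. Multiplying the pieces gives the claim.

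\textbf{Second estimate.} When $\mathcal{G}^*(0)$ holds but $\mathcal{H}^*(0)$ fails, the full product over $(z^{e^{-\tau^*}},z]$ lies in the window
$$
\Big[\sqrt{T+|t|}\,e^{\tau^*}(\log\log z)^{-50},\ \sqrt{T+|t|}\,e^{\tau^*}(\log\log z)^5\Big].
$$
In walk terms, the endpoint $\sum_{m\le\tau^*}G_m$ lies within $O(\log\log\log z)$ below the upper barrier $a\approx\tfrac12\log(T+|t|)+5\log\log\log z$. Again under the Girsanov tilt, we need the ballot probability that the walk stays below $a$ throughout and ends in a window of width $O(\log\log\log z)$ just below $a$. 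The ballot theorem in the form of \cite[Probability Result 1]{harper2019partition}, as used for \eqref{eqn: prob2}, bounds this by
$$
\ll\min\Big\{1,\frac{a}{\sqrt{\tau^*}}\Big\}\cdot\frac{(\log\log\log z)^2}{\tau^*}.
$$

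Here the factor $\min\{1,a/\sqrt{\tau^*}\}$ comes from the first half of the walk. The factor $(\text{window})^2/\tau^*$ comes from the reversed second half needing to stay below a barrier at distance $O(\log\log\log z)$. Since $a\ll\log(T+|t|)+\log\log\log z$, this yields the stated bound. One extra factor of $\log\log\log z$ is absorbed when $\log(T+|t|)\ll\log\log\log z$ — more precisely, by using $a\le\log(T+|t|)\cdot O(\log\log\log z)$ in the worst case. Multiplying back by $\E|F_z|^2\asymp\log z$ completes the argument.

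\textbf{Main obstacle.} The step needing care is the second estimate. We must make sure the endpoint-window ballot bound delivers the full saving $(\log\log\log z)^{3}/\log\log z$, not just $1/\sqrt{\log\log z}$. This means checking two things:
\begin{itemize}
\item the two-sided ballot bound, with both the running constraint and the endpoint constraint, is valid uniformly when $a$ ranges up to order $\log\log x$ (as $|t|$ can be as large as $(\log x)^{100}$);
\item the $o(1)$ in the Gaussian variances and the Girsanov error terms remain harmless in this regime.
\end{itemize}
If a direct two-sided bound is awkward, I would first try splitting at the midpoint $\tau^*/2$ and using independence of the two halves.
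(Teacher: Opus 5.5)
Your proposal is correct and follows the paper's proof essentially step for step. For the first estimate, the paper likewise applies translation invariance, the Girsanov-type \cite[Lemma 4]{HarperLow}, and Probability Results 1 and 2 to the ballot event with upper barrier $\tfrac12\log(T+|t|)+5\log\log\log z$. For the second, it treats the $J=0$ analogue of the event $\mathcal{A}_J(t)$ from Proposition~\ref{prop: Hfail}, applies the endpoint-window ballot bound $\min\{1,a/\sqrt{\tau^*}\}(\log\log\log z)^2/\tau^*$, and absorbs one further factor of $\log\log\log z$ exactly as you do.

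The uniformity concern you flag about Lemma 4 is settled in the paper by one observation. All barrier heights are $O(\log\log z)$, since $|t|\le(\log x)^{100}$, and this is small compared with $\sqrt{\log z^{e^{-\tau^*}}}\asymp(\log\log z)^{3/2}$, which is what Lemma 4 needs to apply.
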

 \begin{proof}
The proof of the first estimate is identical to the proof of Proposition~\ref{prop: mu}. By comparing the definition of $\mathcal{G}^{*}(t)$ here and $\mathcal{H}(t)$ in Proposition~\ref{prop: mu}, we see the relevant random walk event is now
\begin{align*}
 -\frac{B}{2}\log(T+|t|) - (B+1)j - B\log\log\log z - O(1)& \le \sum_{m=1}^{j}G_m \\
 &\le \frac{\log(T+|t|)}{2} + 5\log\log\log z + O(1), 
 \end{align*}
 for all $j\le \tau^{*}$.
Since $T \ll x/y \leq (\log x)^{\frac 12}$ and $|t| \leq (\log x)^{100}$, we see $\log(T+|t|) + \log\log\log z$ is small compared with $\sqrt{\log (z^{e^{-\tau^{*}}})} \asymp (\log\log z)^{\frac 32}$, so that \cite[Lemma 4]{HarperLow} applies.  The Probability Results 1, 2 in \cite{HarperLow} yield a probability estimate for the random walk 
$$
\asymp \min \Big(1, \frac{\log(T+|t|) + \log\log\log z}{\sqrt{\tau^{*}}}\Big) \asymp \min \Big(1, \frac{\log(T+|t|) + \log\log\log z}{\sqrt{\log \log z}}\Big),
$$ 
which proves the first estimate.  
     
     The second statement follows similarly to the proof of Proposition~\ref{prop: Hfail} and is indeed simpler since in the definition of $\mathcal{H}^{*}$, we only further constrain the Euler product for $z^{e^{-\tau^{*}}} < p \le z^{e^{-j}}$ for $j=0$ (in addition to the $\mathcal{G}^{*}$ constraints).  Modifying our earlier proof, we need only set $J=0$ and consider the analogue (with the barriers now corresponding to $\mathcal{G}^{*}, \mathcal{H}^{*}$) of the event $\mathcal{A}_0(t)$.  An application of the ballot theorem gives (analogously to \eqref{eqn: prob2}) that
  \begin{align*}
  \E[(\mathbbm{1}_{\mathcal{G}^{*}(t)}-\mathbbm{1}_{{\mathcal{H}^{*}(t)} } ) &|F_z(\tfrac{1}{2} + it)|^{2} ] \\
  &\ll \E[|F_z(\tfrac{1}{2} + it)|^{2} ] \min \Big\{ 1, \frac{\log (T+|t|) + \log \log \log z}{\sqrt{\tau^{*}}}\Big\} \frac{(\log \log \log z)^{2}}{\tau^{*}}. 
  \end{align*} 
Since $\tau^{*} \asymp \log \log z$, we obtain the second stated result.   
\end{proof}

We are now ready to resume the task of bounding \eqref{4.19}.   In view of Proposition \ref{prop4.6}, with high probability we may assume that in \eqref{4.19} both ${\mathcal G}^*(t_1)$ and ${\mathcal G}^*(t_2)$ hold: thus, we seek now to bound 
\begin{equation} 
\label{eqn: rest}
 \frac{x^2}{(\log z)^{2}} 
\Big(\frac{\log x}{\log z}\Big)^4 \int_{\substack{|t_1|, |t_2| \le (\log x)^{100}\\ |t_1 -t_2| \le 2 (\log \log z)^9/\log z}} \prod_{j=1}^{2} {\mathbbm{1}_{\mathcal{G}^{*}(t_j)}}
|F_z(\tfrac 12+it_j)|^2  \min\Big(\delta^2,\frac{1}{1+|t_j|^2}\Big) dt_1dt_2. 
\end{equation}

To bound \eqref{eqn: rest}  we first show that, up to acceptable errors, one can replace $\mathbbm{1}_{\mathcal{G}^{*}(t_2)}$ by the stronger condition $\mathbbm{1}_{\mathcal{H}^{*}(t_2)}$. 
Define
\begin{equation}
    I_1: =  \frac{1}{\log z}\int_{\substack{|t| \le (\log x)^{100} }} |F_z(\tfrac 12+it)|^{2} \mathbbm{1}_{\mathcal{G}^{*}(t)} \min\Big(\delta^2, \frac{1}{1+t^2} \Big)dt ,
\end{equation}
and
\begin{equation}
    I_2: = \frac{1}{\log z} \int_{\substack{|t| \le (\log x)^{100} }} |F_z(\tfrac 12+it)|^{2} (\mathbbm{1}_{\mathcal{G}^{*}(t)} - \mathbbm{1}_{\mathcal{H}^{*}(t)}) \min\Big(\delta^2, \frac{1}{1+t^2} \Big) dt.
\end{equation}
The first part of Proposition~\ref{prop: no t} shows (recalling $\delta \asymp y/x \asymp 1/T$) that 
\begin{align*}
\E[I_1] &\ll \int_{\substack{|t| \le (\log x)^{100} }} \min\Big(1, \frac{\log (T+|t|)+\log \log \log z}{\sqrt{\log \log z}}\Big) \min\Big(\delta^2, \frac{1}{1+t^2}\Big) dt \\
&\ll \delta \min\Big(1, \frac{\log T +\log \log \log z}{\sqrt{\log \log z}}\Big). 
\end{align*}
By Markov's inequality, it follows that with probability at least $1-O((\log \log z)^{-\frac 1{10}})$, we have
\begin{equation}
\label{4.25}
 I_1  \ll  (\log \log z)^{\frac 19} \delta \min\Big(1, \frac{\log T}{\sqrt{\log \log z}}\Big). 
\end{equation}
 The second part of Proposition~\ref{prop: no t} shows that 
 \begin{align*}
 \E [I_2] &\ll \int_{\substack{|t| \le (\log x)^{100} }} \min\Big(1, \frac{\log (T+|t|)}{\sqrt{\log \log z}}\Big) \frac{(\log \log \log z)^3}{\log \log z} \min\Big(\delta^2, \frac{1}{1+t^2}\Big) dt 
\\
& \ll \delta \min\Big(1, \frac{\log T}{\sqrt{\log \log z}}\Big) \frac{(\log \log \log z)^3}{\log \log z},
 \end{align*}
so that with probability $1-O((\log \log z)^{-\frac 1{10}})$ we have
\begin{equation}
\label{4.26}
 I_2  \ll   \delta \min\Big(1, \frac{\log T }{\sqrt{\log \log z}}  \Big) (\log \log z)^{-\frac 89}. 
\end{equation}
Thus with probability $1-O((\log \log z)^{-\frac 1{10}})$ both \eqref{4.25} and \eqref{4.26} hold, and the contribution to \eqref{eqn: rest} from the portion of the integral where ${\mathcal H}^*(t_2)$ fails is at most (temporarily dropping the condition that $t_1$ and $t_2$ are to be close) 
\[
\ll x^2\Big(\frac{\log x}{\log z}\Big)^4  I_1 \cdot I_2  \ll \frac{y^{2}}{(\log \log z)^{\frac 34}} \min\Big( 1, \frac{(\log T)^2}{\log \log z}\Big) \ll \frac{V(x,y)^2}{(\log \log z)^{\frac 34}},
\] 
which is satisfactory. {\em We remark that the small size of $(\log x)/\log z$ is very important at this step.}

It remains lastly to consider the contribution to \eqref{eqn: rest} where $\mathcal{H}^{*}(t_2)$ holds. Note that (using \eqref{4.21} at $t_2$ and dropping the condition ${\mathcal G}^*(t_1)$) 
\begin{align*}
\E\Big[ {\mathbbm 1}_{{\mathcal G}^*(t_1)} |F_z(\tfrac 12+it_1)|^2 {\mathbbm 1}_{{\mathcal H}^*(t_2)}& |F_z(\tfrac 12+it_2)|^2] 
\\
&\ll \frac{e^{2\tau^*}(T+|t_2|)}{(\log \log z)^{100}} \E\Big[ |F_z(\tfrac 12+it_1)|^2 \prod_{p\le z^{e^{-\tau^*}}} \Big| 1- \frac{f(p)}{p^{\frac 12+it_2}}\Big|^{-2}\Big].
\end{align*} 
Using the independence of $f$ on distinct primes, and Euler Product Result 1 in  \cite{Harperlargevalue}, we obtain 
\begin{align*}
 \E\Big[ &|F_z(\tfrac 12+it_1)|^2 \prod_{p\le z^{e^{-\tau^*}}} \Big| 1- \frac{f(p)}{p^{\frac 12+it_2}}\Big|^{-2}\Big] \\
 &=\E\Big[ \prod_{p\le z^{e^{-\tau^{*}}}} |1- \frac{f(p)}{p^{\frac{1}{2}+it_1}}|^{-2} \prod_{p\le z^{e^{-\tau^{*}}}} \Big|1- \frac{f(p)}{p^{\frac{1}{2}+it_2}}\Big|^{-2} \Big] \prod_{z^{e^{-\tau^*}} < p\le z}\Big(1-\frac 1p\Big)^{-1} \ll (e^{-\tau^*} \log z)^4 e^{\tau^*}.\\ 
 \end{align*}
Thus the expected contribution to \eqref{eqn: rest} from the terms where $\mathcal{H}^{*}(t_2)$ holds is (recall that $e^{-\tau^*} \log z$ is about $(\log \log z)^{3}$ by definition)
\begin{align*}
&\ll x^2 \frac{e^{-\tau^*} (\log z)^2}{(\log \log z)^{100}} \Big(\frac{\log x}{\log z}\Big)^4 \int_{\substack{ |t_1|, |t_2| \le (\log x)^{100} \\ |t_1-t_2| \le 2 (\log \log z)^9/\log z}} 
(T+|t_2|) \min\Big(\delta^2,\frac{1}{1+t_2^2}\Big)^2 dt_1 dt_2 
\\
&\ll \frac{x^2}{(\log \log z)^{87}} \int_{|t_2|\le (\log x)^{100}} (T+|t_2|)  \min\Big(\delta^2,\frac{1}{1+t_2^2}\Big)^2 dt_2 \ll \frac{y^2}{(\log \log z)^{87}}. 
\end{align*} 
{\em At this step, we crucially retained and used the restriction that $|t_1-t_2| \le 2 (\log \log z)^9/\log z$.} By Markov's inequality it follows that with probability $1-O((\log \log x)^{-1})$ this contribution to \eqref{eqn: rest} is $\ll y^2 (\log \log z)^{-86}$, which is better than needed.  

Tracking back through all our reductions, and keeping in mind \eqref{4.3}, at last we conclude that \eqref{eqn: cross} holds with the desired high probability.
\qed

  \section{Long sums}\label{SEC: LONG SUM}

\subsection{Proof of Proposition \ref{prop: tailsprop}}  The main theorems of Harper~\cite{HarperLow} imply the existence of 
absolute constants $c$, $C$ with  $0 < c \leq C$, such that uniformly for all large $x$ and $0 \leq q \leq 1$ we have
\begin{equation}\label{eqn: AJHmomcC}
c \left( \frac{x}{1 + (1-q)\sqrt{\log\log x}} \right)^q \leq \E[|\sum_{n \leq x} f(n)|^{2q}] \leq C \left( \frac{x}{1 + (1-q)\sqrt{\log\log x}} \right)^q .
\end{equation}
For ease of notation put temporarily $g= (\log \log x)^{\frac 14} x^{-\frac 12} \sum_{n\le x} f(n)$, so that uniformly in $q\le 1- 1/\sqrt{\log \log x}$ we find from \eqref{eqn: AJHmomcC} 
\begin{equation} 
\label{5.2} 
\frac {c}{3(1-q)} \le \E [|g|^{2q}] \le \frac{C}{(1-q)}. 
\end{equation} 
Here we used that $(2(1-q))^{-q} \ge 1/(3(1-q))$ and that $(1-q)^{-q} \le(1-q)^{-1}$ for all $0\le q<1$.  

Let $\lambda$ be large, and take 
$$ 
p = 1-\frac{1}{\log \lambda}, \qquad q = 1- \frac{c}{50 C \log \lambda}, \qquad r= 1- \frac{c}{100 C \log \lambda}, 
$$ 
so that $0< p < q < r \le 1- 1/\sqrt{\log \log x}$ for $x$ sufficiently large (in terms of $\lambda$).  We will apply \eqref{5.2} to analyze the $2p$, $2q$ and $2r$-th moments of $g$. The increased size of the $2q$-th moment compared with the $2p$-th moment will allow us to conclude that a large portion of the $2q$-th moment must be produced by values $|g| > \lambda$. On the other hand, the fact that the $2r$-th moment isn't too much bigger than the $2q$-th will imply that this size isn't all produced by values much larger than $\lambda$, and so $|g|$ must exceed $\lambda$ with reasonable probability.

Note that 
\begin{align*}
\E[ |g|^{2q} {\mathbbm 1}_{|g|>\lambda}] &= \E[|g|^{2q}] - \E[|g|^{2q} {\mathbbm 1}_{|g| \le \lambda}] \ge \E[|g|^{2q}] - \lambda^{2(q-p)} \E [ |g|^{2p}] \\
&\ge \frac{c}{3(1-q)} - e^2 \frac{C}{(1-p)} \ge 8C \log \lambda. 
\end{align*} 
On the other hand, by H{\" o}lder's inequality 
\begin{align*}
\E[ |g|^{2q} {\mathbbm 1}_{|g|>\lambda}] &\le (\P[|g| >\lambda])^{1- \frac{q}{r}} (\E[|g|^{2r}])^{\frac qr} \le (\P[|g| >\lambda])^{1- \frac{q}{r}} (100C^2 c^{-1} \log \lambda)^{\frac qr} 
\\
&\le  (\P[|g| >\lambda])^{1- \frac{q}{r}} (100C^2 c^{-1} \log \lambda) . 
\end{align*} 
We conclude that 
$$ 
\P[|g| >\lambda] \ge \Big( \frac{2c}{25C}\Big)^{r/(r-q)} \ge \Big( \frac{2c}{25C}\Big)^{1/(r-q)} = \lambda^{-A} 
$$
with $A = 100(C/c) \log (25C/2c)$, which proves the desired bound.  

\vspace{12pt}
Similarly for $\sum_{x \leq n \leq (1+\delta)x} f(n)$, there exist constants $0 < c(\delta) \leq C(\delta)$ such that
\begin{equation}
c(\delta) \Big( \frac{\delta x}{1 + (1-q)\sqrt{\log\log x}} \Big)^q \leq \E[\Big|\sum_{x \leq n \leq (1+\delta)x} f(n) \Big|^{2q}] \leq C(\delta) \Big( \frac{\delta x}{1 + (1-q)\sqrt{\log\log x}} \Big)^q
\end{equation}
for all large enough $x$ (depending on $\delta$) and $0 \leq q \leq 1$. The existence of $C(\delta)$ follows immediately from \eqref{eqn: AJHmomcC} and the triangle inequality, or from Theorem 1.1 of Caich~\cite{caichshort} (which would yield a much better dependence on $\delta$, in fact with $C(\delta)$ uniformly bounded if $q$ is close enough to 1 depending on $\delta$). The existence of $c(\delta)$ follows by slightly adapting the arguments of Harper~\cite{HarperLow} along the lines of section \ref{SEC: CONDVAR} here.  For example one could show that 
(omitting smaller order terms below) 
\begin{eqnarray}
\E[\Big|\sum_{x \leq n \leq (1+\delta)x} f(n) \Big|^{2q}] & \gg & (\delta x)^q \E\Big[\Big(\frac{\delta}{\log x} \int_{-1/(100\delta)}^{1/(100\delta)} \Big|F_{x^{3/4}}(\tfrac{1}{2} + \tfrac{B\log(1/\delta)}{\log x} +it)\Big|^{2} dt \Big)^{q}\Big]  \nonumber \\
& \geq & (\delta x)^q \E\Big[\Big(\frac{\delta}{\log x} \int_{-1/2}^{1/2} \Big|F_{x^{3/4}}(\tfrac{1}{2} + \tfrac{B\log(1/\delta)}{\log x} +it)\Big|^{2} dt \Big)^{q}\Big] \nonumber
\end{eqnarray}
for a suitable large constant $B$, and then the results in section 5 of Harper~\cite{HarperLow} show this is indeed $\geq c(\delta) \left( \frac{\delta x}{1 + (1-q)\sqrt{\log\log x}} \right)^q$. 
The claimed lower bound in Proposition \ref{prop: tailsprop} can then be obtained by exactly the same argument as for $\sum_{n \leq x} f(n)$.
\qed

\subsection{Long sums cannot converge to a non-degenerate Gaussian}
As mentioned in the Introduction, a particular consequence of Proposition \ref{prop: tailsprop} (and other known results) is that there is {\em no} normalizing factor $V(x)$ for which $\frac{1}{\sqrt{V(x)}} \sum_{n \leq x} f(n)$ can converge in distribution to a non-degenerate Gaussian. Indeed \eqref{eqn: Harper} gives 
$$
 \frac{1}{\sqrt{V(x)}} \E\Big[\Big|\sum_{n \leq x} f(n)\Big|\Big] \ll \frac{1}{\sqrt{V(x)}} \frac{\sqrt{x}}{(\log\log x)^{\frac 14}}, 
$$
so that $V(x)$ must necessarily be bounded by $L {x}/(\log \log x)^{\frac 12}$ for some constant $L$.  But in this case, Proposition \ref{prop: tailsprop} yields that
$$ 
\P\Big(\frac{1}{\sqrt{V(x)}} \Big|\sum_{n \leq x} f(n)\Big| \geq \lambda \Big) \geq \P\Big(\Big|\sum_{n \leq x} f(n)\Big| \geq \lambda \sqrt{L} \frac{\sqrt{x}}{(\log\log x)^{1/4}} \Big) \geq \frac{1}{L^{A/2} \lambda^A} . $$
For sufficiently large fixed $\lambda$, this greatly exceeds the Gaussian tail, preventing convergence in distribution.

\vspace{12pt}
In fact, there is an even softer argument for showing that $\frac{(\log \log x)^{1/4}}{\sqrt{x}} \sum_{n \leq x} f(n)$ cannot converge in distribution to a Gaussian (although the probability lower bound in Proposition \ref{prop: tailsprop} seems of independent interest). For any fixed $q_0 < 1$, the moment bounds \eqref{eqn: AJHmomcC} imply that $(\frac{(\log\log x)^{1/4}}{\sqrt{x}})^{2q_0} \E[|\sum_{n \leq x} f(n)|^{2q_0}]$ is uniformly bounded as $x \rightarrow \infty$. By general probability theory (see e.g. Theorem 4.2 of Gut~\cite{Gut}), this means that for any fixed $q < q_0$ (so for any fixed $q < 1$), the sequence of random variables $(\frac{(\log\log x)^{1/4}}{\sqrt{x}})^{2q} |\sum_{n \leq x} f(n)|^{2q}$ is uniformly integrable. Again by general theory (see e.g. Theorem 5.9 of Gut~\cite{Gut}), if $\frac{(\log \log x)^{1/4}}{\sqrt{x}} \sum_{n \leq x} f(n)$ were converging to a Gaussian, we would then get that all of the moments $(\frac{(\log\log x)^{1/4}}{\sqrt{x}})^{2q} \E[|\sum_{n \leq x} f(n)|^{2q}]$ with $q < 1$ would converge to the moments of that Gaussian limit. But the $2q$-th moments of any fixed Gaussian are uniformly bounded for all $q \leq 1$, whereas the lower bound part of \eqref{eqn: AJHmomcC} shows the moments $(\frac{(\log\log x)^{1/4}}{\sqrt{x}})^{2q} \E[|\sum_{n \leq x} f(n)|^{2q}]$ can be made arbitrarily large by taking $q$ close enough to 1.

For any fixed small $\delta > 0$, exactly the same considerations apply to $\sum_{x \leq n \leq (1+\delta)x} f(n)$.
  \bibliographystyle{plain}
\bibliography{RMFFinal}{}
\end{document}